\providecommand\@dotsep{5}
\newcommand{\amsartlistoftodos}{\makeatother \listoftodos\relax}
\definecolor{my-linkcolor}{rgb}{0.75,0,0}
\definecolor{my-citecolor}{rgb}{0,0.5,0}
\definecolor{my-urlcolor}{rgb}{0,0,0.75}
\newcommand{\ph}{\prescript{p}{}{\operatorname{H}}^0}
\newcommand{\bbC}{\mathbb{C}}
\newcommand{\bbR}{\mathbb{R}}
\newcommand{\bfM}{\mathbf{M}}
\newcommand{\bfN}{\mathbf{N}}
\newcommand{\bfP}{\mathbf{P}}
\newcommand{\bfQ}{\mathbf{Q}}
\newcommand{\calA}{\mathcal{A}}
\newcommand{\calB}{\mathcal{B}}
\newcommand{\calC}{\mathcal{C}}
\newcommand{\calH}{\mathcal{H}}
\newcommand{\calL}{\mathcal{L}}
\newcommand{\calS}{\mathcal{S}}
\newcommand{\scrE}{\mathscr{E}}
\newcommand{\scrF}{\mathscr{F}}
\DeclareMathOperator{\Coind}{Coind}
\DeclareMathOperator{\codim}{codim}
\DeclareMathOperator{\ic}{IC}
\DeclareMathOperator{\id}{id}
\DeclareMathOperator{\Inc}{Inc}
\DeclareMathOperator{\Ind}{Ind}
\DeclareMathOperator{\mor}{Hom}
\DeclareMathOperator{\orient}{or}
\DeclareMathOperator{\Perv}{Perv}
\DeclareMathOperator{\Res}{Res}
\DeclareMathOperator{\Vect}{Vect}
\declaretheorem[parent=section]{theorem}
\declaretheorem[unnumbered, name=Theorem]{theorem*}
\declaretheorem[sibling=theorem]{proposition}
\declaretheorem[unnumbered, name=Proposition]{proposition*}
\declaretheorem[unnumbered, name=Conjecture]{conjecture*}
\declaretheorem[sibling=theorem, style=definition]{definition}
\declaretheorem[unnumbered, style=definition, name=Definition]{definition*}
\declaretheorem[sibling=theorem]{lemma}
\declaretheorem[unnumbered, name=Lemma]{lemma*}
\declaretheorem[unnumbered, style=remark, name=Remark]{remark*}
\declaretheorem[unnumbered, style=remark, name=Assumption]{assumption*}
\declaretheorem[sibling=theorem]{corollary}
\declaretheorem[unnumbered , name=Corollary]{corollary*}
\declaretheorem[sibling=theorem]{question}
\declaretheorem[unnumbered , name=Question]{question*}
\declaretheorem[sibling=theorem]{questions}
\declaretheorem[unnumbered , name=Question]{questions*}
\DeclareMathOperator{\mods}{-mod}
\DeclareMathOperator{\Mods}{-Mod}
\DeclareMathOperator{\Sym}{Sym}
\newcommand{\recollement}[9]{
\begin{tikzcd}[ampersand replacement=\&, column sep=large]
 #1 \arrow{r}{#4}
 \& #2 \arrow[bend right]{l}[swap]{#5} \arrow[bend left]{l}{#6}  \arrow{r}{#7}
 \& #3 \arrow[bend right]{l}[swap]{#8} \arrow[bend left]{l}{#9} 
\end{tikzcd}}
\tikzset{>=to}
\title{Recollement for perverse sheaves on real hyperplane arrangements}
\author{Asilata Bapat}
\date{\today}
\address{Mathematical Sciences Institute, Australian National University, Acton ACT 2601, Australia}
\email{asilata.bapat@anu.edu.au}
\begin{document}
\maketitle
\amsartlistoftodos\

\begin{abstract}
  We consider a hyperplane arrangement in $\bbC^n$ defined over $\bbR$, and the associated natural stratification of $\bbC^n$.
  The category of perverse sheaves smooth with respect to this stratification was described by Kapranov and Schechtman in terms of quiver representations.
  Using work of Weissman, we reinterpret this category as the category of finite-dimensional modules over an explicit algebra.
  We also describe \emph{recollement} (open-closed decomposition) of perverse sheaves in terms of this module category.
  As an application, we identify the modules associated to all intersection cohomology complexes.

  We also compute recollement for $W$-equivariant perverse sheaves for the reflection arrangement of a finite Coxeter group $W$.
  We identify the equivariant intersection cohomology sheaves arising as intermediate extensions of local systems on the open stratum, thereby answering a question of Weissman.
\end{abstract}

\section{Introduction}
Let $X$ be a smooth complex algebraic variety equipped with an algebraic stratification $\calS$.
Let $k$ be a fixed field of coefficients.
The category of perverse sheaves smooth with respect to the stratification $\calS$ is a certain abelian subcategory of the bounded derived category of cohomologically $\calS$-constructible sheaves of $k$-vector spaces on $X$~\cite{bel.ber.del:82}.
This category is typically denoted $\Perv(X,\calS)$.
For simplicity, assume that there is a unique, connected open stratum.
Then $\Perv(X,\calS)$ is a natural generalisation of the category of finite-rank $k$-local systems on the open stratum, which has an algebraic description as the category of representations of the fundamental group of the open stratum.
A general theme is to realise $\Perv(X,\calS)$ as the category of representations of an explicit algebra, perhaps directly extending the above algebraic description of the category of local systems on the open stratum.
There are some general strategies towards such a description~\cite{bel:87,mac.vil:86}, and answers are known in several cases~\cite{gal.gra.mai:85,gel.mac.vil:96,pol:97,bra.gri:99,bra:02,vyb:07,gud.nar:08,str:09,ehr.str:16}.
Our focus is the algebraic description of perverse sheaves on real hyperplane arrangements due to Kapranov--Schechtman~\cite{kap.sch:16}, as well as its equivariant analogue for Coxeter arrangements due to Weissman~\cite{wei:17}.

In general, $\Perv(X,\calS)$ can be reconstructed from the categories of perverse sheaves on the open stratum and on its complement.
This setup is called gluing or \emph{recollement}~\cite{bel.ber.del:82}.
Details about recollements on abelian categories are in \autoref{sec:recollement}.
One of the goals of this paper is to extend the algebraic descriptions of perverse sheaves from~\cite{kap.sch:16} and~\cite{wei:17} to their open/closed recollements, as well as to the subcategories of sheaves supported on closed unions of strata.
We now introduce the setup.

Let $\calH$ be an arrangement of linear hyperplanes in $\bbR^n$.
Let $(\calC,\leq)$ be the partially ordered set of faces of $\calH$, ordered by closure-inclusion.
Let $\calH_{\bbC}$ denote the complexification of $\calH$ in the complex vector space $X = \bbC^n$.
Then $\calH_{\bbC}$ determines a natural stratification $\calS$ on $X$.
More details are in \autoref{subsec:background}.
Kapranov and Schechtman~\cite{kap.sch:16} describe $\Perv(X,\calS)$ as a certain subcategory of the category of representations of the double quiver of the poset $(\calC,\leq)$.
If $\calH$ is the reflection arrangement of a finite Coxeter group $W$, then Weissman~\cite{wei:17} extended the above description to the category $\Perv_W(X,\calS)$, the category of $W$-equivariant perverse sheaves.
Further, Weissman constructed an explicit algebra such that $\Perv_W(X,\calS)$ is equivalent to the finite-dimensional module category over this algebra.
In \autoref{subsec:algebra-r}, we construct a similar explicit algebra $R$ associated to any real arrangement $\calH$.

The main results of the paper are summarised as follows.
\begin{compactenum}[(1)]
\item \autoref{thm:equivalence} gives an equivalence of categories between $\Perv(X,\calS)$ and the category of finite-dimensional $R$-modules.
\item
  \autoref{thm:recollement-equivalence} proves that the recollement of $\Perv(X,\calS)$ into its open and closed pieces is equivalent to a certain recollement for $R$-modules.
\autoref{cor:open-local-systems} identifies the $R$-modules associated to the intersection cohomology sheaves on $X$ arising as intermediate extensions of local systems on the open stratum.
\item \autoref{thm:closed-unions} describes the category of perverse sheaves supported on a closed union of strata of $\calS$ as the category of finite-dimensional modules over a certain quotient of $R$.
  \autoref{cor:main-recollement} identifies the $R$-modules associated to all intersection cohomology sheaves on $X$.
\item \autoref{sec:coxeter-arrangement} discusses the $W$-equivariant setup for the reflection arrangement of a finite Coxeter group $W$.
  \autoref{thm:equivariant-equivalence} is the $W$-equivariant analogue of \autoref{thm:recollement-equivalence}.
  \autoref{cor:equiv-open-ls} is the $W$-equivariant analogue of \autoref{cor:open-local-systems}.
\end{compactenum}

\subsection*{Acknowledgements}
I am indebted to Martin Weissman for helpful conversations and clarifications.
I am grateful to Thomas Gobet for a useful discussion, and to Corey Jones for suggesting the idea of the proof of \autoref{prop:injective-hom}.

\section{Perverse sheaves on real hyperplane arrangements: an alternate description}
The main theorem of~\cite{kap.sch:16} states that the category of perverse sheaves on $\bbC^n$ constructible with respect to $\calS$ is equivalent to a particular subcategory of representations of a quiver.
It will be more transparent for us to describe this subcategory as the module category of an explicit algebra.
The aim of this section is to construct this algebra, and then to translate the data of one of the above admissible quiver representations into the data of a module over this algebra.
We denote this algebra by $R$.

The results of this section should be thought of as parallel to a similar translation from~\cite[Section 4.3]{wei:17}, which focuses on equivariant perverse sheaves on Coxeter arrangements.
The main theorem in this section is \autoref{thm:equivalence}, which obtains an equivalence of categories between $\Perv(X,\calS)$ and finite-dimensional $R$-modules.
\subsection{Real hyperplane arrangements and double quiver representations}\label{subsec:background}\
This subsection recalls some definitions following~\cite{kap.sch:16} and~\cite{wei:17}, as well as the main theorem of~\cite{kap.sch:16}.
For much of the paper, fix $\calH$ to be a hyperplane arrangement in $\bbR^n$, and let $\calH_{\bbC}$ be its complexification in $X = \bbC^n$.
For each $H \in \calH$, also fix a real linear polynomial $f_H$ that cuts out $H$.

For each $x \in \bbR^n$, its real sign vector $\sigma(x) \in \{+,-,0\}^{\calH}$ is defined as follows.
For any $H \in \calH$, the sign $\sigma(x)_H$ at that hyperplane is either $+$, $-$, or $0$, depending on whether $f_H(x)$ is positive, negative, or zero respectively.
We then have an equivalence relation on $\bbR^n$, where $x \sim y$ if $\sigma(x) = \sigma(y)$.
Equivalence classes in $\bbR^n$ with respect to this equivalence relation are called the \emph{faces} of $\calH$, and they form the \emph{face poset} $\calC$.
The partial order on $\calC$ is closure-inclusion: if $C', C \in \calC$, then $C' \leq C$ if $C' \subseteq \overline{C}$.
We will denote by $\calC^i$ the subset of $\calC$ consisting of faces of real codimension $i$.
In particular, elements of $\calC^0$ are sometimes called chambers.

For each $x \in X$, its complex sign vector $\sigma_{\bbC}(x) \in \{\ast, 0\}^{\calH}$ is defined as follows.
For any $H \in \calH_{\bbC}$, the complex sign $\sigma_{\bbC}(x)_H$ at that hyperplane either equals $\ast$ or $0$ depending on whether $f_H(x)$ is non-zero or zero respectively.
If $x, y \in X$, we say that $x \sim_{\bbC} y$ if $\sigma_{\bbC}(x) = \sigma_{\bbC}(y)$.
The set of equivalence classes in $X$ with respect to this equivalence relation forms a stratification of $X$, denoted by $\calS$.
We consider the category $\Perv(X, \calS)$ of perverse sheaves on $X = \bbC^n$ that are smooth with respect to $\calS$.

\begin{definition}
  Let $A$ and $B$ be faces in $\calC^d$, with $d \geq 1$.
  We say that $A$ \emph{opposes} $B$ through $C$ if the following conditions hold.
  \begin{compactenum}[(1)]
  \item The real linear spans of $A$ and $B$ are equal.
  \item There is some face $C\in \calC^{(d-1)}$ such that $C \leq A$, $C \leq B$, and $A$ and $B$ lie on opposite sides of $C$.
    More precisely, for every hyperplane $H \in \calH$ such that $\sigma(C)_H = 0$, we have $\sigma(A)_H = - \sigma(B)_H$.
  \end{compactenum}
\end{definition}

\begin{definition}
  We say that three faces $A$, $B$, and $C$ in $\calC$ are \emph{collinear} if a straight line segment can be drawn in $\bbR^n$ that intersects the faces $A$, $B$, and $C$ in that order.
\end{definition}

\begin{definition}
A double representation $E$ of $\calC$ consists of the following data.
\begin{compactenum}[(1)]
\item A vector space $E_C$ for each $C \in \calC$.
\item Maps $\gamma_{C'C}\colon E_{C'}\to E_C$  and $\delta_{CC'} \colon E_C \to E_{C'}$ for every $C' \leq C$.
\end{compactenum}
For each $C\in \calC$, we require $\gamma_{CC} = \delta_{CC} = \id_{E_C}$.
Additionally, for each $C_1$, $C_2$, $C_3$ in $\calC$ such that $C_1 \leq C_2 \leq C_3$, we require
\[
  \gamma_{C_2 C_3}\gamma_{C_1 C_2} = \gamma_{C_1 C_3}\text {and } \delta_{C_2C_1}\delta_{C_3C_2} = \delta_{C_3C_1}.
\]
\end{definition}
A morphism $V \to W$ of double representations consists of vector space maps $V_C\to W_C$ for each $C\in \calC$, that intertwine with each $\gamma_{C'C}$ and $\delta_{CC'}$.
Double representations of $\calC$ form an abelian category.
\begin{definition}
The category $\calA$ is defined to be the full subcategory of double representations of $\calC$ consisting of objects $E$ satisfying the following three properties.
\begin{compactenum}[(1)]
\item Monotonicity: for every $C' \leq C$, we have $\gamma_{C'C}\delta_{CC'} = \id_{E_C}$.
  As a consequence, we have well-defined maps $\varphi_{AB}\colon E_A \to E_B$ for any $A,B \in \calC$, defined as follows.
  For any $C\in \calC$ satisfying $C \leq A$ and $C \leq B$, set $\varphi_{AB} = \gamma_{CB}\delta_{AC}$.
\item Transitivity: for any $A,B,C\in \calC$ that are collinear (in that order), we have $\varphi_{AC} = \varphi_{BC}\varphi_{AB}$.
\item Invertibility: for any $A,B\in \calC$ that oppose each other, the map $\varphi_{AB}$ is an isomorphism.
\end{compactenum}
\end{definition}

\begin{theorem}[{\cite[Theorem 8.1]{kap.sch:16}}]
  There is an equivalence of categories
  \[
    \bfQ\colon \Perv(X,\calS)\to \calA.
  \]
\end{theorem}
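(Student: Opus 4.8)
The plan is to build the functor $\bfQ$ out of the behaviour of a perverse sheaf transverse to the real faces of $\calH$, and then to prove it is an equivalence by reducing --- via recollement and induction on the arrangement --- to the one-dimensional case, where the statement is classical. To construct $\bfQ$, take $\calF \in \Perv(X,\calS)$ and a face $C \in \calC^d$; pick a generic point $c$ of $C$ and a small slice $N_c \cong \bbC^d$ transverse to the complex stratum through $c$, carrying the transverse arrangement of those hyperplanes that contain the support of $C$, so that $\calF|_{N_c}$ is perverse (up to shift) for that stratification. I would set $E_C$ to be a local Morse group of $\calF$ at $c$ for a covector adapted to the real face $C$ --- concretely, the degree-zero relative hypercohomology of $\calF$ on a small ball around $c$ modulo its intersection with a suitable half-space determined by the direction of $C$ --- a formulation that makes the assignment visibly independent of choices and functorial in $\calF$. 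For $C' \le C$, the generization and specialization natural transformations relating these Morse groups (equivalently, the iterated nearby/vanishing-cycle functors computing them) supply the maps $\gamma_{C'C}$ and $\delta_{CC'}$, and functoriality of (co)specialization along chains of faces yields the composition laws $\gamma_{C_2C_3}\gamma_{C_1C_2}=\gamma_{C_1C_3}$ and $\delta_{C_2C_1}\delta_{C_3C_2}=\delta_{C_3C_1}$. One must then verify that $\bfQ(\calF) \in \calA$, checking the defining conditions one at a time: monotonicity $\gamma_{C'C}\delta_{CC'}=\id$ is the standard fact that specialization followed by generization across a single codimension is the identity on vanishing cycles; transitivity for a collinear triple $A,B,C$ reduces, after taking the straight segment as the relevant path, to the corresponding one-variable identity; and invertibility for a pair of opposing faces is, on the transverse $\bbC$ they determine, a repackaging of the invertibility of the monodromy automorphism attached to a perverse sheaf on $(\bbC,\{0\})$.

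To prove $\bfQ$ is an equivalence I would argue by induction on $\#\calH$ (enlarging the class of stratified spaces under consideration if that is needed to make the induction self-contained, e.g.\ to cover affine arrangements). Fix $H_0 \in \calH$, write $j\colon U \hookrightarrow X$ for the complement of the complex hyperplane $H_{0,\bbC}$ and $i\colon Z \hookrightarrow X$ for $H_{0,\bbC}$ itself; both $(U,\calS|_U)$ and $(Z,\calS|_Z)$ are governed by the smaller arrangement $\calH\setminus\{H_0\}$ --- by restriction and by trace respectively --- so the inductive hypothesis gives combinatorial models for $\Perv(U,\calS|_U)$ and $\Perv(Z,\calS|_Z)$. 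Beilinson's gluing theorem then presents $\Perv(X,\calS)$ as the category of gluing data $(\calF_U,\calF_Z,m)$ with $\calF_U\in\Perv(U,\calS|_U)$, $\calF_Z\in\Perv(Z,\calS|_Z)$, and $m$ a morphism comparing the unipotent nearby cycles of $\calF_U$ along $f_{H_0}$ with $\calF_Z$. The heart of the proof is to match this, compatibly with morphisms, against the purely combinatorial recipe assembling an object of $\calA$ for $\calH$ from the corresponding objects for the two subarrangements: the faces of $\calH$ partition into those disjoint from $H_0$, carrying the $U$-part of the data, and those contained in $H_0$, carrying the $Z$-part, while precisely the maps $\gamma_{C'C}$ and $\delta_{CC'}$ crossing $H_0$ are what the gluing morphism $m$ records. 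Once this dictionary is in place, $\bfQ$ is at once essentially surjective and fully faithful.

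The hard part will be exactly this last bookkeeping: one must show that the three conditions defining $\calA$ are \emph{precisely} what Beilinson gluing forces for the larger arrangement, granting that they hold for the two subarrangements --- neither too weak, so that $\bfQ$ surjects onto all of $\calA$, nor too strong, so that every object of $\calA$ genuinely underlies a perverse sheaf. Transitivity for collinear triples is the subtle ingredient, because a generic straight real segment runs through an elaborate configuration of faces and the identities it produces form a coherence system that must be deduced from the one-variable case together with the gluing. A secondary technical issue, flagged above, is ensuring that the slice definition of $E_C$ and of the maps $\gamma,\delta$ is manifestly well defined and functorial; the microlocal (Morse-group) language is the natural setting for this, and one could alternatively build the whole functor through the exit-path/microlocal formalism and read off $\calA$ directly from the resulting description.
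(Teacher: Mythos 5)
This statement is not proved in the paper at all: it is quoted verbatim from Kapranov--Schechtman \cite[Theorem 8.1]{kap.sch:16}, so there is no internal proof to compare your sketch against. Measured against the actual argument in \emph{that} paper, your construction of $\bfQ$ is close in spirit (their $E_C$ is $\Gamma(C, j^!\calF)$ for $j\colon \bbR^n\hookrightarrow \bbC^n$, which is a hyperbolic stalk/Morse-group construction, and the main technical input is a purity theorem putting these in degree $0$), but your proof that $\bfQ$ is an equivalence goes a genuinely different way. Kapranov--Schechtman do not induct on $\#\calH$ via Beilinson gluing; they construct an explicit inverse functor, assembling from an object of $\calA$ the Cousin-type complex $\scrE^\bullet$ built from the cells $iP+Q$ (the same complex reproduced in \eqref{eq:perverse-complex} of the present paper), and then verify directly that this complex is perverse and that the two functors are mutually inverse. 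The explicit inverse is what the present paper actually needs downstream (for the stalk computations in \autoref{lem:f-double-prime} and \autoref{thm:closed-unions}), so your route, even if completed, would not supply those ingredients.

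As a standalone strategy, your induction has a concrete structural gap: after removing $H_0$, the open piece $U = X\setminus H_{0,\bbC}$ is \emph{not} governed by the face poset of $\calH\setminus\{H_0\}$ --- the real faces of the smaller arrangement are cut in two by $H_0$, so $\calC$ does not decompose as (faces of $\calH\setminus\{H_0\}$) $\sqcup$ (faces of the trace arrangement on $H_0$), and the category $\calA$ for $\calH$ is not obviously a gluing of the two smaller $\calA$'s. In particular the collinearity and opposition relations for $\calH$ involve triples and pairs of faces that straddle $H_0$ and have no counterpart in either subarrangement, so the ``dictionary'' you defer to the end is not mere bookkeeping; it is where the theorem lives, and it is exactly the part your sketch does not supply. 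Enlarging the class of arrangements to affine ones mitigates the first half of this (the faces of $\calH$ meeting $U$ do form the face poset of an affine arrangement on each half-space), but the matching of Beilinson's gluing datum $m$ against the $\gamma,\delta$ maps crossing $H_0$, together with the verification that monotonicity/transitivity/invertibility are exactly the conditions gluing imposes, would still need to be carried out in full.
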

\subsection{The algebra $R$}\label{subsec:algebra-r}
We construct an algebra $R$ to encode the information of the category $\calA$.
The construction parallels the construction of the algebra from~\cite[Theorem 4.3.1]{wei:17} for the Coxeter-equivariant case.
First define $R_0$ to be the algebra generated freely over $\bbC$ by the set $\{e_C \mid C \in \calC\}$, subject to the following relations.
\begin{compactenum}[(R1)]
\item Every generator is idempotent.
  That is, for any face $C$, we have $e_C^2 = e_C$.
\item For any three collinear faces $A$, $B$, and $C$, we have:
  \[
    e_Ae_C = e_Ae_Be_C.
  \]
\item Whenever $A \leq B$, we have
  \[
    e_Ae_B = e_B = e_B e_A.
  \]
\end{compactenum}
Then the idempotent corresponding to the unique smallest face (corresponding to the origin of $\bbR^n$) is the unit of $R_0$.
We define $R$ to be the noncommutative localisation of $R_0$ at the multiplicative subset generated by the following elements, for any two opposing faces $A$ and $B$:
\[
  e_Ae_Be_A + (1-e_A).
\]

If $A$ and $B$ are two opposing faces, let $s_{AB}$ denote the multiplicative inverse of the element $e_Ae_Be_A + (1 - e_A)$, which means that
\[
  (e_Ae_Be_A + (1 - e_A))s_{AB} = s_{AB}(e_Ae_Be_A + (1 - e_A)) = 1.
\]
Since $e_A$ is an idempotent, $(1-e_A)$ is also an idempotent.
Moreover, $e_A(1-e_A) = (1-e_A)e_A = 0$.
Multiplying the above equation on the left and right by $(1-e_A)$ and $e_A$ respectively, we conclude that
\begin{align*}
  (1-e_A)s_{AB} &= s_{AB}(1-e_A) = (1-e_A),\\
  e_Ae_Be_As_{AB} &= s_{AB}e_Ae_Be_A = e_A.
\end{align*}
The first equation also implies that
\begin{equation}
  \label{eq:lr-inverse}
  e_As_{AB} = s_{AB}e_A = e_As_{AB}e_A.
\end{equation}

\subsection{Double quiver representations and $R$-modules}\label{subsec:equiv-functors}
As before, let $\calA$ be the category of monotonic, transitive, and invertible double representations of $\calC$.
Let $R$-mod be the category of finite-dimensional (left) $R$-modules.
We now define functors $\bfM\colon \calA \to R\mods$ and $\bfN\colon R\mods \to \calA$, and show that they are mutually inverse equivalences.

Let $E = (E_C,\gamma_{C'C},\delta_{CC'})$ be an object of $\calA$.
Let $Z$ be the unique smallest face in $\calC$.
Set the underlying vector space of $\bfM(E)$ to be $E_Z$.
For each generator $e_C$ of $R$, set $e_C \colon E_Z\to E_Z$ to be the map $\delta_{CZ}\gamma_{ZC}$.
\begin{proposition}
  Given an object $E$ of $\calA$, the vector space $E_Z$ is an $R$-module via the actions $e_C(v) = \delta_{CZ}\gamma_{ZC}(v)$ for each $C\in \calC$.
\end{proposition}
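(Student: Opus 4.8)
The plan is to show that the rule $e_C \mapsto \delta_{CZ}\gamma_{ZC}$ extends to a unital $\bbC$-algebra homomorphism $\rho \colon R \to \operatorname{End}_{\bbC}(E_Z)$; the associated action is then the claimed $R$-module structure. Since $R$ is a localisation of the algebra $R_0$ given by generators and relations, it suffices to check two things: first, that the endomorphisms $\rho(e_C) \coloneqq \delta_{CZ}\gamma_{ZC}$ satisfy relations (R1)--(R3), so that $\rho$ is defined on $R_0$; and second, that for every pair of opposing faces $A, B$, the endomorphism $\rho(e_A e_B e_A + (1-e_A))$ is invertible, so that $\rho$ factors through the localisation $R$. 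I note at the outset that $\rho(e_Z) = \delta_{ZZ}\gamma_{ZZ} = \id_{E_Z}$, consistent with $e_Z$ being the unit of $R_0$.

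The computations will all be driven by one observation: since $Z$ is the minimum of $\calC$, it is a common lower bound for every pair of faces, and hence $\varphi_{AB} = \gamma_{ZB}\delta_{AZ}$ for all $A, B \in \calC$. It follows that for any faces $A, B, C$,
\[
  \rho(e_A)\rho(e_C) = \delta_{AZ}(\gamma_{ZA}\delta_{CZ})\gamma_{ZC} = \delta_{AZ}\,\varphi_{CA}\,\gamma_{ZC},
\]
and, inserting $\rho(e_B) = \delta_{BZ}\gamma_{ZB}$ in the middle,
\[
  \rho(e_A)\rho(e_B)\rho(e_C) = \delta_{AZ}\,\varphi_{BA}\varphi_{CB}\,\gamma_{ZC},
  \qquad
  \rho(e_A)\rho(e_B)\rho(e_A) = \delta_{AZ}\,\varphi_{BA}\varphi_{AB}\,\gamma_{ZA}.
\]
From here, (R1) is immediate from monotonicity ($\gamma_{ZC}\delta_{CZ} = \id_{E_C}$); (R3), for $A \leq B$, follows from the defining composition identities $\delta_{BZ} = \delta_{AZ}\delta_{BA}$ and $\gamma_{ZB} = \gamma_{AB}\gamma_{ZA}$ together with monotonicity; and (R2) follows from transitivity applied to the collinear triple $C, B, A$ --- the same line segment traversed in the opposite direction --- which yields $\varphi_{CA} = \varphi_{BA}\varphi_{CB}$ and hence $\rho(e_A)\rho(e_C) = \rho(e_A)\rho(e_B)\rho(e_C)$. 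Note that the invertibility axiom for objects of $\calA$ plays no role in this step.

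For the localisation step, fix opposing faces $A, B$ and set $p \coloneqq \rho(e_A) = \delta_{AZ}\gamma_{ZA}$, an idempotent, so that $E_Z = \operatorname{im}(p) \oplus \ker(p)$ and $\rho(1-e_A) = \id_{E_Z} - p$ is the projection onto $\ker(p)$. Because $\gamma_{ZA}\delta_{AZ} = \id_{E_A}$, the map $\delta_{AZ}$ restricts to an isomorphism $E_A \xrightarrow{\ \sim\ } \operatorname{im}(p)$, with inverse induced by $\gamma_{ZA}$. The operator $\rho(e_A e_B e_A + (1-e_A))$ respects this decomposition: it acts as the identity on $\ker(p)$, while on $\operatorname{im}(p)$ it agrees with $\rho(e_A e_B e_A) = p\,\rho(e_B)\,p$, which --- conjugating by $\delta_{AZ}$ and using $\gamma_{ZA}\delta_{AZ} = \id$ --- becomes the endomorphism $\varphi_{BA}\varphi_{AB}$ of $E_A$. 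Since the opposing relation is symmetric in $A$ and $B$, the invertibility axiom makes both $\varphi_{AB}$ and $\varphi_{BA}$ isomorphisms, so $\varphi_{BA}\varphi_{AB}$ is an isomorphism; thus $\rho(e_A e_B e_A + (1-e_A))$ is invertible on $E_Z$, and $\rho$ descends to $R$.

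The part most likely to need care is the verification of (R2): one must recognise that a straight line meeting $A, B, C$ in that order is the same datum as one meeting $C, B, A$ in that order, so that the transitivity axiom can be invoked with the arrows pointing the right way, and one must keep track of the fact that the $\varphi$-maps arising in the computation are precisely those obtained by taking $Z$ as the common lower bound. Once this bookkeeping is in place, the localisation step is essentially formal linear algebra with the idempotent $p$.
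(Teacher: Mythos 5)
Your proof is correct and follows essentially the same route as the paper's: verify (R1)--(R3) directly from monotonicity, transitivity (applied to the reversed collinear triple), and the composition identities, then handle the localised elements via the decomposition $E_Z = e_AE_Z \oplus (1-e_A)E_Z$. The only minor difference is in the final step, where the paper shows $e_Ae_Be_A$ is injective on $e_AE_Z$ and invokes finite-dimensionality, whereas you conjugate by $\delta_{AZ}$ to identify its restriction with the isomorphism $\varphi_{BA}\varphi_{AB}$ of $E_A$ directly -- a slightly more economical way to close the same argument.
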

\begin{proof}
  We check that the actions defined for the generators $e_C$ satisfy the relations of $R$, and also that each element of the localised subset acts invertibly on $E_Z$.
  \begin{compactenum}[(R1)]
  \item Since $\gamma_{ZC}\delta_{CZ} = \id_{E_C}$ for each $C$, the action of every $e_C$ is idempotent.
  \item Let $A$, $B$, and $C$ be three collinear faces.
    Then for each $v \in E_Z$, we have 
    \[
      e_Ae_Be_C(v) = \delta_{AZ}\gamma_{ZA}\delta_{BZ}\gamma_{ZB}\delta_{CZ}\gamma_{ZC}(v).
    \]
    Recall that we have
    \[
      \gamma_{ZA}\delta_{BZ}\gamma_{ZB}\delta_{CZ} = \varphi_{BA}\varphi_{CB} = \varphi_{CA} = \gamma_{ZA}\delta_{CZ},
    \]
    where the second equality is by the transitivity property of $E$.
    All together we have
    \[
      e_Ae_Be_C(v) = \delta_{AZ}\gamma_{ZA}\delta_{CZ}\gamma_{ZC}(v) = e_Ae_C(v).
    \]
  \item Suppose that $A \leq B$.
    Then for each $v \in E_Z$ we have
    \[
      e_Ae_B(v) = \delta_{AZ}\gamma_{ZA}\delta_{BZ}\gamma_{ZB}(v).
    \]
    Since $A \leq B$, we have $\delta_{BZ} = \delta_{AZ}\delta_{BA}$.
  Moreover, $\gamma_{ZA}\delta_{AZ} = \id_{E_A}$.
  So
  \[
    e_Ae_B(v) = \delta_{AZ}\delta_{BA}\gamma_{ZB}(v) = \delta_{BZ}\gamma_{ZB}(v) = e_B(v).
  \]
  A similar computation in  the other direction (using $\gamma_{ZB} = \gamma_{AB}\gamma_{ZA}$) shows that $e_B(v) = e_Be_A(v)$.
\end{compactenum}
Now suppose that $A$ and $B$ oppose each other.
Note that $E_Z = e_AE_Z \oplus (1-e_A)E_Z$, and the element $e_Ae_Be_A + (1-e_A)$ acts by $e_Ae_Be_A$ on $e_AE_Z$ and by $(1-e_A)$ on $(1-e_A)E_Z$.
The latter action is by the identity map, and so obviously an isomorphism.

So checking that the map $e_Ae_Be_A + (1-e_A)$ is invertible on $E_Z$ is equivalent to checking that $e_Ae_Be_A$ is invertible on $e_AE_Z$.
  Since $E_Z$ is finite-dimensional, it is sufficient to check that $e_Ae_Be_A$ is injective on $e_AE_Z$.

  Suppose that for some $e_A(v) \in e_AE_Z$, we have $e_Ae_Be_A(v) = 0$.
  Recall that
  \[
    e_Ae_Be_A(v) = \delta_{AZ}\gamma_{ZA}\delta_{BZ}\gamma_{ZB}\delta_{AZ}\gamma_{ZA}(v) = \delta_{AZ}\varphi_{BA}\varphi_{AB}\gamma_{ZA}(v).
  \]
  Since $A$ and $B$ oppose each other, the maps $\varphi_{AB}$ and $\varphi_{BA}$ are both isomorphisms.
  Since $Z\leq A$, the map $\delta_{AZ}$ is injective.
  This means that if $e_Ae_Be_A(v) = 0$, then $\gamma_{ZA}(v) = 0$.
  Therefore $\delta_{AZ}\gamma_{ZA}(v) = e_A(v) = 0$.
  This calculation implies that $e_Ae_Be_A$ is injective (and hence an isomorphism) on $e_AE_Z$.
  Consequently, $e_Ae_Be_A + (1-e_A)$ is an isomorphism on $E_Z$.
  These checks prove that $E_Z$ acquires the structure of an $R$-module via the actions defined.
\end{proof}

We now define $\bfM$ on morphisms of $\calA$.
Given $\alpha' = (E'_C,\gamma'_{C'C},\delta'_{CC'})$ and a morphism $f\colon \alpha \to \alpha'$, set $\mathbf{M}(f)$ to be the map $f_Z\colon E_Z\to E'_Z$.
For any $C\in \calC$, observe that
\[
  e_Cf_Z = \delta_{CZ}\gamma_{ZC}f_Z = \delta_{CZ}f_C\gamma'_{ZC} = f_Z\delta'_{CZ}\gamma'_{ZC} = f_Ze_C.
\]
Since $f$ commutes with all generators, it is a map of $R$-modules.

Having defined $\bfM\colon \calA \to R\mods$, we now define $\bfN\colon R\mods\to \calA$.
Let $V$ be an $R$-module.
Define $\bfN(V) = (V_C,\gamma'_{C'C},\delta'_{CC'})$ as follows.
For every $C\in \calC$, set $V_C = e_C V$.
Recall that for every $C'\leq C$, we have $e_C = e_{C'}e_C$.
Set $\gamma'_{C'C} \colon e_{C'}V \to e_C V$ to be multiplication by $e_C$.
Set $\delta'_{CC'}\colon e_CV \to e_{C'}V$ to be the natural inclusion map given by $e_Cv \mapsto e_{C'}e_Cv$.
We can think of $\delta'_{CC'}$ as being multiplication by $e_{C'}$.
To ensure that $\bfN(V)$ is an object of $\calA$, we check the following three conditions.
\begin{compactdesc}
\item[Monotonicity]
  Let $C'\leq C$.
  Then for any $e_Cv \in e_CV$, we have
  \[
    \gamma'_{C'C}\delta'_{CC'}(e_Cv) = \gamma'_{C'C}(e_{C'}e_Cv) = e_Ce_{C'}e_Cv = e_Cv.
  \]
  Therefore $\gamma'_{C'C}\delta'_{CC'} = \id_{V_C}$.

\item[Transitivity]
  Let $A$, $B$, and $C$ be collinear faces.
  For any $e_Av \in e_AV$, we have
  \begin{align*}
    \varphi_{BC}\varphi_{AB}(e_AV) &= \gamma'_{ZC}\delta'_{BZ}\gamma'_{ZB}\delta'_{AZ}(e_Av)\\
                                   &=e_Ce_Ze_Be_Ze_Av\\
                                   &=e_Ce_Be_Av\\
                                   &= e_Ce_Av\quad\text{(by the relation in $R$)}\\
                                   & = e_Ce_Ze_Av = \varphi_{AC}v
  \end{align*}

\item[Invertibility]
  Let $A$ and $B$ be two faces that oppose each other.
  Observe that
  \[
    \varphi_{AB}\varphi_{BA} = e_Be_Ae_B,
  \]
  which is invertible on $e_BV$.
  Similarly $\varphi_{BA}\varphi_{AB} = e_Ae_Be_A$, which is invertible on $e_AV$.
  The first relation implies that $\varphi_{AB}$ is surjective and $\varphi_{BA}$ is injective.
  The second relation implies that $\varphi_{BA}$ is surjective and $\varphi_{AB}$ is injective.
  So both $\varphi_{AB}$ and $\varphi_{BA}$ are isomorphisms.
\end{compactdesc}
Therefore $\bfN$ sends $R$-modules to objects in $\calA$.
On a morphism $f \colon V \to W$ of $R$-modules, we set $\bfN(f)_C\colon e_CV \to e_CW$ to be $\bfN(f)_C = e_Cf = f e_C$.

\subsection{Equivalence of categories}
% In~\cite{kap.sch:16}, Kapranov and Schechtman prove that $\Perv(\bbC^n,\calH)$ is equivalent to $\calA$ via a functor $\bfQ$.
Combining the constructions above with Kapranov--Schechtman's theorem, we obtain the following theorem.
\begin{theorem}\label{thm:equivalence}
  The functors $\mathbf{M}$ and $\bfN$ give mutually inverse equivalences of categories between $R\mods$ and $\calA$.
  Composing with the equivalence $\bfQ \colon \Perv(\bbC^n,\calH) \to \calA$, we conclude that $\Perv(\bbC^n,\calH)$ and $R\mods$ are equivalent via the composition $\bfM\circ \bfQ$.
\end{theorem}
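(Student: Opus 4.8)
The plan is to show that the functors $\bfM$ and $\bfN$ constructed in \autoref{subsec:equiv-functors} are mutually inverse, since we have already verified that each lands in the correct category. The composition with $\bfQ$ is then immediate. So the whole proof reduces to exhibiting natural isomorphisms $\bfN\circ\bfM \cong \id_{\calA}$ and $\bfM\circ\bfN \cong \id_{R\mods}$.

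First I would treat $\bfM\circ\bfN$, which is the easier direction. Given an $R$-module $V$, unwinding the definitions gives $\bfN(V) = (e_CV, \gamma'_{C'C}, \delta'_{CC'})$ with the smallest face $Z$ satisfying $e_Z = 1$, so $(\bfN(V))_Z = e_ZV = V$; hence the underlying vector space of $\bfM(\bfN(V))$ is literally $V$. It remains to check that the $R$-action agrees: the generator $e_C$ acts on $\bfM(\bfN(V))$ by $\delta'_{CZ}\gamma'_{ZC}$, which by construction is "multiply by $e_C$, then include via multiply by $e_Z = 1$", i.e.\ just multiplication by $e_C$ — the original action. So $\bfM\circ\bfN$ is the identity on the nose (or at worst canonically isomorphic to it), and naturality in the morphism $f$ is the observation that $\bfM(\bfN(f)) = \bfN(f)_Z = f e_Z = f$.

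Next I would handle $\bfN\circ\bfM$. Given $E = (E_C, \gamma_{C'C}, \delta_{CC'})$ in $\calA$, the module $\bfM(E)$ has underlying space $E_Z$ with $e_C$ acting as $\delta_{CZ}\gamma_{ZC}$. Applying $\bfN$, the $C$-component is $e_C E_Z = \delta_{CZ}\gamma_{ZC}(E_Z) \subseteq E_Z$. The natural candidate isomorphism $\eta_C \colon e_C E_Z \to E_C$ is $\gamma_{ZC}$ restricted to $e_C E_Z$, with inverse $\delta_{CZ}$ restricted to $E_C$; monotonicity ($\gamma_{ZC}\delta_{CZ} = \id_{E_C}$) gives one composite, and the identity $\delta_{CZ}\gamma_{ZC}\delta_{CZ}\gamma_{ZC} = \delta_{CZ}\gamma_{ZC}$ on the image gives the other. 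I would then verify that the collection $(\eta_C)$ intertwines the structure maps: for $C' \le C$, checking $\eta_C \circ (\text{mult.\ by }e_C) = \gamma_{C'C}\circ \eta_{C'}$ amounts to the identity $\gamma_{ZC}\delta_{CZ}\gamma_{ZC} = \gamma_{ZC}$ together with $\gamma_{C'C}\gamma_{ZC'} = \gamma_{ZC}$; the analogous check for the $\delta'$ maps uses $\delta_{CZ} = \delta_{C'Z}\delta_{CC'}$. Finally I would confirm naturality of $(\eta_C)$ in morphisms of $\calA$, which is formal from the definition $\bfM(f) = f_Z$ and $f$'s compatibility with the $\gamma$'s and $\delta$'s.

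The main obstacle, such as it is, is bookkeeping rather than mathematics: all the substantive content — that the actions are well-defined, that opposing-face elements act invertibly, that $\bfN(V)$ satisfies monotonicity/transitivity/invertibility — was already established in \autoref{subsec:equiv-functors}. The one genuinely delicate point is ensuring that $\bfN\circ\bfM$ recovers $E$ up to \emph{natural} isomorphism rather than merely objectwise isomorphism, so I would be careful to check the intertwining and naturality diagrams commute; but each such diagram collapses to one of the defining relations of a double representation in $\calA$ (monotonicity, or the cocycle conditions $\gamma_{C_2C_3}\gamma_{C_1C_2} = \gamma_{C_1C_3}$ and its $\delta$-analogue). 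Once both natural isomorphisms are in hand, composing $\bfM$ with $\bfQ$ from \cite[Theorem 8.1]{kap.sch:16} yields the stated equivalence $\Perv(\bbC^n,\calH) \simeq R\mods$.
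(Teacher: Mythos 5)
Your proposal is correct and follows essentially the same route as the paper: the paper likewise dispatches $\bfM\circ\bfN$ as immediate (since $e_Z=1$) and builds the natural isomorphism between $\id_{\calA}$ and $\bfN\circ\bfM$ componentwise from the mutually inverse pair $\delta_{CZ}\colon E_C\to e_CE_Z$ and $\gamma_{ZC}\colon e_CE_Z\to E_C$, checking the same intertwining and naturality squares via monotonicity and the composition laws for $\gamma$ and $\delta$. The only cosmetic difference is the direction in which you orient the natural transformation.
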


\begin{proof}
  It is easy to see that $\bfM \circ \bfN$ is the identity functor on $R\mods$.
  For the other composition, we show that $\bfN\circ \bfM$ is isomorphic to the identity functor on $\calA$.
  We need isomorphisms $\iota_\alpha \colon \alpha \to \bfN(\bfM(\alpha))$ for each object $\alpha$ of $\calA$, such that if $f\colon \alpha \to \beta$ is any morphism in $\calA$, then the following diagram commutes.
  \begin{equation}
    \label{eq:functoriality}
    \begin{tikzcd}
      \alpha \arrow{r}{\iota_\alpha} \arrow{d}[swap]{f}& \bfN(\bfM(\alpha)) \arrow{d}{\bfN(\bfM(f))}\\
      \beta \arrow{r}{\iota_\beta} & \bfN(\bfM(\beta))
    \end{tikzcd}
  \end{equation}

  Let $\alpha = (E_C,\gamma_{C'C},\delta_{CC'})$ and let $\bfN(\bfM(\alpha)) = (e_C E_Z,\gamma'_{C'C},\delta'_{CC'})$.
    %Given an object $\alpha = (E_C,\gamma_{C'C},\delta_{CC'})$ in $\calA$, let $\beta = \bfN(\bfM(\alpha)) = (e_CE_Z, \gamma'_{C'C}, \delta'_{CC'})$.
  %We must show that $\alpha$ is naturally isomorphic to $\beta$.
  Recall that $\varphi_{CZ}\colon E_C \to E_Z$ can be rewritten as follows:
  \[
    \varphi_{CZ} = \gamma_{ZZ}\delta_{CZ} = \delta_{CZ} = \delta_{CZ}\gamma_{ZC}\delta_{CZ}= e_C \delta_{CZ}.
  \]
  So the image of $\varphi_{CZ}$ lies in $e_CE_Z$.
  We also have the following equalities.
  \begin{align*}
    \varphi_{CZ}\circ\varphi_{ZC} &= \gamma_{ZZ}\delta_{CZ}\gamma_{ZC}\delta_{ZZ} = e_C = \id_{e_C E_Z}\\
    \varphi_{ZC}\circ \varphi_{CZ} &= \gamma_{ZC}\delta_{ZZ}\gamma_{ZZ}\delta_{CZ} = \gamma_{ZC}\delta_{CZ} = \id_{E_Z}.
  \end{align*}
  So $\varphi_{CZ}\colon E_C \to e_CE_Z$ is an isomorphism, with inverse $\varphi_{ZC}$.
  We also check the following for every $C'\leq C$.
  \begin{align*}
      \gamma'_{C'C}\circ \varphi_{C'Z} &= e_C \circ \gamma_{ZZ}\delta_{C'Z} = \delta_{CZ}\gamma_{ZC}\delta_{C'Z}\\
                                       &=\delta_{CZ}\gamma_{C'C}\gamma_{ZC'}\delta_{C'Z} = \delta_{CZ}\gamma_{C'C} = \gamma_{ZZ}\delta_{CZ}\gamma_{C'C}\\
                                       &= \varphi_{CZ}\circ \gamma_{C'C}.\\
    \delta'_{CC'}\circ \varphi_{CZ} &= e_{C'}\gamma_{ZZ}\delta_{CZ} = \delta_{C'Z}\gamma_{ZC'}\gamma_{ZZ}\delta_{CZ} = \delta_{C'Z}\gamma_{ZC'}\delta_{CZ}\\
                                       &=\delta_{C'Z}\gamma_{ZC'}\delta_{C'Z}\delta_{CC'} = \delta_{C'Z}\delta_{CC'} = \gamma_{ZZ}\delta_{C'Z}\delta_{CC'}\\
                                       &= \varphi_{C'Z}\circ \delta_{CC'}.
  \end{align*}
  Therefore the following diagrams commute.
  \begin{center}
    \begin{tikzcd}
      E_{C'} \arrow{r}{\gamma_{C'C}} \arrow{d}[swap]{\varphi_{C'Z}} & E_C \arrow{d}{\varphi_{CZ}}\\
      e_{C'} E_Z \arrow{r}{\gamma'_{C'C}} & e_C E_Z
    \end{tikzcd}
    \quad\quad
    \begin{tikzcd}
      E_{C} \arrow{r}{\delta_{CC'}} \arrow{d}[swap]{\varphi_{CZ}} & E_{C'} \arrow{d}{\varphi_{C'Z}}\\
      e_{C} E_Z \arrow{r}{\delta'_{CC'}} & e_{C'} E_Z
    \end{tikzcd}
  \end{center}
  So the family of maps $\{\varphi_{CZ}\mid C\in \calC\}$ defines an $\calA$-isomorphism from $\alpha$ to $\bfN(\bfM(\alpha))$.
  We set $\iota_\alpha$ to be this isomorphism.

  Let $\beta = (F_C,\gamma_{C'C},\delta_{CC'})$, and let $f \colon \alpha \to \beta$ be any $\calA$-morphism.
  Checking the commutativity of the diagram in~\eqref{eq:functoriality} is equivalent to checking the commutativity of the following diagram for each $C$.
  \begin{center}
    \begin{tikzcd}
      E_C \arrow{r}{(\iota_\alpha)_C} \arrow{d}[swap]{f_C} & e_C E_Z \arrow{d}{e_C f_Z}\\
      F_C \arrow{r}{(\iota_\beta)_C} & e_C F_Z
    \end{tikzcd}
  \end{center}
  This check is completed via the following calculation.
  \begin{align*}
    e_Cf_Z(\iota_\alpha)_C & = \delta_{CZ}\gamma_{ZC}f_Z\gamma_{ZZ}\delta_{CZ}\\
                           &= \delta_{CZ}\gamma_{ZC}\delta_{CZ} f_C\\
                           & = \delta_{CZ}f_Z = \gamma_{ZZ}\delta_{CZ}f_C\\
                           & = (\iota_\beta)f_C.
  \end{align*}
  Therefore $\bfN\circ \bfM$ is isomorphic to the identity functor on $\calA$, and the proof is complete.
\end{proof}

\section{Recollement structures on abelian categories}\label{sec:recollement}
We recall some general definitions, which originally appeared in~\cite[Section 1.4]{bel.ber.del:82}.
Our definitions follow the more recent reference~\cite{fra.pir:04}.
\begin{definition}
  Let $\calA$, $\calA'$, and $\calA''$ be abelian categories.
  Suppose that we have functors $i_*$, $i^*$, $i^!$, $j^*$, $j_!$ and $j_*$ that fit into the following diagram.
  \begin{center}
    \recollement{\calA'}{\calA}{\calA''}{i_*}{i^*}{i^!}{j^*}{j_!}{j_*}
  \end{center}
  Then these functors are said to form a \emph{recollement} if the following conditions hold.
  \begin{compactenum}[(1)]
  \item There are adjunctions $(j_!,j^*)$, $(j^*,j_*)$, $(i^*,i_*)$, and $(i_*,i^!)$.
  \item The unit morphisms $\id_{\calA'} \to i^!i_*$ and $\id_{\calA''}\to j^*j_!$ are isomorphisms.
  \item The counit morphisms $i^*i_* \to \id_{\calA'}$ and $j^*j_*\to \id_{\calA''}$ are isomorphisms.
    % \item The compositions $j^*i_*$, $i^*j_!$, and $i^!j_*$ are zero.
  \item The functor $i_*$ is an embedding onto the full subcategory of $\calA$ of objects $A$ such that $j^*A = 0$.
    Consequently, the compositions $j^*i_*$, $i^*j_!$, and $i^!j_*$ are zero.
  \end{compactenum}
\end{definition}

\subsection{Recollement of perverse sheaves}
Let $\Perv(X,\calS)$ be the category of perverse sheaves on a space $X$ with respect to a given stratification $\calS$.
Let $U\in \calS$ be an open stratum, and let $V = X \setminus U$.
Let $j \colon U\to X$ and $i \colon V \to X$ be the inclusion maps.
We have a recollement as follows (see, e.g.~\cite{bel.ber.del:82}).
\begin{center}
  \recollement{\Perv(V,\calS|_V)}{\Perv(X,\calS)}{\Perv(U,\calS|_U)}{i_*}{\ph(i^*)}{\ph(i^!)}{j^*}{\ph(j_!)}{\ph(j_*)}
\end{center}

\subsection{Recollement induced by an idempotent on a module category}\label{subsec:ring-recollement}
Let $A$ be a ring and $e$ an idempotent.
We recall the standard recollement induced on $A\mods$ by $e$ in this situation (see, e.g.,~\cite[Section 1]{cli.par.sco:88} or~\cite[Section 4.1]{ste:16}).
Define the functors $\Ind_e$ and $\Coind_e$ from $eAe\mods$ to $A\mods$ as follows:
\[
  \Ind_e(M) = Ae \otimes_{eAe} M, \quad \Coind_e(M) = \mor_{eAe}(eA,M).
\]
Define $\Res_e$ from $A$-mod to $eAe$-mod by
\[
  \Res_e(N) = eN \cong \mor_A(Ae,N) \cong eA\otimes_AN.
\]
Define endofunctors $T_e$ and $N_e$ on $A$-mod as follows:
\[
  T_e(M) = AeM, \quad N_e(M) = \{m \in M \mid eAm = 0\}.
\]
Finally, let $\Inc$ from $A/AeA$-mod to $A$-mod be the functor that upgrades $A/AeA$-modules to $A$-modules via the quotient $A \to A/AeA$.
The following theorem is well-known (see, e.g.~\cite{psa:14}).
\begin{theorem}\label{thm:ring-recollement}
  Let $A$ be a ring and $e\in A$ an idempotent.
  Consider the following functors:
  \begin{align*}
    i_* &= \Inc,\quad i^* = (-)/T_e(-),\quad i^! = N_e,\\
    j^* &= \Res_e, \quad j_! = \Ind_e,\quad j_* = \Coind_e.
  \end{align*}
  These six functors fit into the following recollement diagram.
  \begin{center}
    \recollement{A/AeA\mods}{A\mods}{eAe\mods}{i_*}{i^*}{i^!}{j^*}{j_!}{j_*}
  \end{center}
\end{theorem}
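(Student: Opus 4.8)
The plan is to recognise all six functors as instances of familiar adjoint pairs, and then to read off the recollement axioms from general facts about fully faithful functors together with two short direct computations.

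First I would rewrite the functors in the lower (``$j$'') row. Using the natural isomorphisms $\mor_A(Ae,N)\cong eN$ and $eA\otimes_A N\cong eN$ of $eAe$-modules, the functor $j^*=\Res_e$ is simultaneously $\mor_A(Ae,-)$ and $eA\otimes_A -$. Hence $(j_!,j^*)$ and $(j^*,j_*)$ are the tensor--hom adjoint pairs $(Ae\otimes_{eAe}-,\,\mor_A(Ae,-))$ and $(eA\otimes_A -,\,\mor_{eAe}(eA,-))$, which gives two of the four adjunctions in condition~(1). Next I would treat the upper (``$i$'') row. Writing $\bar A=A/AeA$ and using $1\in A$, one has $(AeA)M=AeM=T_e(M)$, so $\bar A\otimes_A M=M/T_e(M)=i^*M$; dually $\mor_A(\bar A,M)=\{m\in M\mid (AeA)m=0\}=\{m\in M\mid eAm=0\}=N_e(M)=i^!M$, the middle equality holding because $a\cdot(ebm)=0$ whenever $ebm=0$. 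Thus $(i^*,i_*,i^!)$ is exactly the extension--restriction--coinduction triple along the surjection $A\to\bar A$, which supplies the remaining two adjunctions.

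With condition~(1) established, the parts of conditions~(2) and~(3) involving $i$ are formal: $i_*=\Inc$ is fully faithful because $A\to\bar A$ is surjective, so the counit $i^*i_*\to\id$ (of the adjunction with fully faithful right adjoint $i_*$) and the unit $\id\to i^!i_*$ (of the adjunction with fully faithful left adjoint $i_*$) are both isomorphisms. The remaining two isomorphisms are direct computations: the unit $\id\to j^*j_!$ is the map $M\xrightarrow{\sim}e(Ae\otimes_{eAe}M)=eAe\otimes_{eAe}M$, $m\mapsto e\otimes m$ (here one uses the decomposition $Ae=eAe\oplus(1-e)Ae$ of right $eAe$-modules to see that $e$ acts as the identity on the first summand and by zero on the second), and the counit $j^*j_*\to\id$ is the map $e\,\mor_{eAe}(eA,M)\xrightarrow{\sim}M$ given by evaluation at $e$. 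Finally, for condition~(4) one notes that for any left $A$-module $M$ one has $(AeA)M=0$ iff $eM=0$ (one implication from $AeM\subseteq(AeA)M$, the other from $(AeA)M=AeM$), so $\Inc$ identifies $\bar A\mods$ with the full subcategory $\{M\mid j^*M=eM=0\}$; the asserted vanishing of $j^*i_*$, $i^*j_!$, and $i^!j_*$ then follows, either from the identities $T_e(j_!M)=j_!M$ and $N_e(j_*M)=0$ or formally from $j^*i_*=0$ by adjunction.

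Nothing here is deep; the one point that demands genuine care is the bookkeeping, namely checking that the isomorphisms $j^*j_!\cong\id$, $j^*j_*\cong\id$, $i^*i_*\cong\id$, and $i^!i_*\cong\id$ coincide with the canonical unit and counit morphisms of the relevant adjunctions, which comes down to tracking precisely which $A$- and $eAe$-module structures each identification uses. Everything else reduces to the tensor--hom adjunction and to restriction of scalars along $A\to A/AeA$.
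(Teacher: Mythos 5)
Your proof is correct, and all the individual verifications (the four tensor--hom/restriction-of-scalars adjunctions, the unit and counit computations via $Ae = eAe \oplus (1-e)Ae$ and evaluation at $e$, the identification $\{m \mid (AeA)m=0\} = \{m \mid eAm=0\}$, and the characterisation of the essential image of $\Inc$) are the standard ones. The paper does not actually prove this theorem --- it records it as well-known and cites the literature --- so your write-up simply supplies the omitted standard argument; there is nothing to contrast.
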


Note that if $M \in eAe\mods$, then there is a natural homomorphism of $A$-modules from $j_!M$ to $j_*M$, described as follows.
Given $a \in A$ and $m \in M$, send $ae\otimes m \in j_!M$ to the element of $j_*(M) =\mor_{eAe}(eA,M)$ given by $eb \mapsto (ebae)m$.
It is easy to check that this is a well-defined map of $A$-modules.
Motivated by the analogous construction in the case of perverse sheaves, we give the following definition.
\begin{definition}\label{def:shriek-star-functor}
  The assignment $M\in eAe\mods$ to the image of the $A$-module homomorphism $j_!(M)\to j_*(M)$ defines a functor from $eAe\mods$ to $A\mods$.
  We call this functor the \emph{intermediate extension functor}, and denote it $j_{!*}$.
\end{definition}

\section{Comparisons between $R$-modules and perverse sheaves}
The aim of this section is to extend the equivalence obtained in \autoref{thm:equivalence} to categories related to $\Perv(X,\calS)$ and specific objects of $\Perv(X,\calS)$.
In particular, we describe an equivalent for $R$-modules of the open/closed recollement of $\Perv(X,\calS)$.
We also describe an equivalent for $R$-modules for the full subcategories of perverse sheaves supported on closed unions of strata.
We use these descriptions to identify intersection cohomology sheaves on $X$ in terms of $R$-modules.
\subsection{An equivalence of recollements}
\begin{lemma}\label{lem:equal-ideals}
  Let $A, F_1, \ldots, F_k, B$ be a sequence of faces in $\calC$ of the same dimension related by successive oppositions.
  That is, each face opposes the next one in order.
  Then the ideal generated by $e_A$ in $R$ contains $e_B$.
\end{lemma}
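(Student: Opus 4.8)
The strategy is to reduce to the case of a single opposition, i.e. $k=0$, where $A$ opposes $B$ directly, and then chain the resulting inclusions of ideals. So suppose first that $A$ and $B$ oppose each other. By construction of $R$, the element $s_{AB}$ is the two-sided inverse of $e_Ae_Be_A + (1-e_A)$, and from~\eqref{eq:lr-inverse} we have $e_As_{AB} = s_{AB}e_A = e_As_{AB}e_A$, together with $e_Ae_Be_As_{AB} = s_{AB}e_Ae_Be_A = e_A$. The plan is to write $e_B$ explicitly as an element of the two-sided ideal $Re_AR$.

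The key computation is the following. Since $A$ and $B$ oppose each other through some face $C \in \calC^{(d-1)}$ with $C \leq A$ and $C \leq B$, the relation (R3) applied to $C \leq B$ gives $e_Ce_B = e_B = e_Be_C$, and similarly $e_Ce_A = e_A = e_Ae_C$. Moreover, a straight segment through $A$, $C$, $B$ in that order realises the collinearity of these three faces (since $A$ and $B$ are opposite across $C$), so relation (R2) gives $e_Ae_B = e_Ae_Ce_B = e_Ae_B$ — more usefully, the collinearity of $B, C, A$ gives $e_Be_A = e_Be_Ce_A$, which is automatic. The point I actually want is that $e_B = e_Be_Ae_B \cdot (\text{something in } Re_AR)$, or more precisely that $e_Be_Ae_B$ is invertible "relative to $e_B$": by symmetry of the opposition relation, $A$ opposes $B$ iff $B$ opposes $A$, so $e_Be_Ae_B + (1-e_B)$ is also inverted in $R$, with inverse $s_{BA}$ satisfying $e_Be_Ae_Bs_{BA} = s_{BA}e_Be_Ae_B = e_B$. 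Therefore
\[
  e_B = e_Be_Ae_B s_{BA} = e_B e_A e_B s_{BA},
\]
which visibly lies in the two-sided ideal $Re_AR$ generated by $e_A$. Hence $e_B \in Re_AR$, and symmetrically $e_A \in Re_BR$, so the two ideals coincide.

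For the general case, apply the above to each consecutive pair in the chain $A, F_1, \ldots, F_k, B$: we get $Re_AR = Re_{F_1}R = \cdots = Re_{F_k}R = Re_BR$, so in particular $e_B \in Re_AR$, as claimed. The only thing to verify carefully is that consecutive faces in the chain genuinely "oppose" in the sense of the definition — but that is exactly the hypothesis ("each face opposes the next one in order"), which in particular forces all the $F_i$ to have the same dimension $d$ as $A$ and $B$, so each single-opposition step is legitimate. I expect the main (minor) obstacle to be bookkeeping with the idempotents: confirming that the relation $e_Be_Ae_B + (1-e_B)$ really is one of the elements inverted in the localisation defining $R$ — this holds because the opposition relation is symmetric, so the pair $(B,A)$ is opposing whenever $(A,B)$ is, and the localisation was taken at $e_{A'}e_{B'}e_{A'} + (1-e_{A'})$ for *all* opposing pairs $A', B'$.
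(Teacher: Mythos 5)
Your proof is correct and takes essentially the same route as the paper: reduce by induction to a single opposition and exploit the fact that $e_Be_Ae_B + (1-e_B)$ is inverted in $R$ because the opposition relation is symmetric. The paper phrases the final step by showing $\overline{e}_B = 0$ in the quotient $R/Re_AR$, whereas you exhibit the explicit membership $e_B = e_Be_Ae_Bs_{BA} \in Re_AR$; these are equivalent, and your intermediate remarks about (R2) and collinearity are harmless but unnecessary.
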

\begin{proof}
  By induction, it is enough to prove the lemma in the case where $A$ opposes $B$.
  This amounts to showing that the image of $e_B$ is zero in $R/Re_AR$.

  Recall that $s = e_Be_Ae_B + (1-e_B)$ is invertible in $R$.
  So for some $t \in R$, we have $st = ts = 1$.
  Let $\overline{s}$ and $\overline{t}$ denote the images of $s$ and $t$ in $R/Re_AR$.
  We have $\overline{st} = \overline{ts} = 1$ in $R/Re_AR$.
  Note that $\overline{s} = (1 - \overline{e}_B)$.  
  We also know that $e_B$ is idempotent, and so $e_B(1-e_B) = 0$.
  Multiplying the equation $1 = \overline{st}$ on the left by $\overline{e}_B$, we see that $\overline{e}_B = 0$.
\end{proof}

\begin{corollary}
  Let $A, F_1,\ldots, F_k, B$ be a sequence of faces in $\calC$ of the same dimension related by successive oppositions. 
  That is, each face opposes the next one in order.
  Then the categories $e_ARe_A$-mod and $e_BRe_B$-mod are equivalent.
\end{corollary}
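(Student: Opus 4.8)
The plan is to reduce immediately to the case where $A$ opposes $B$ directly, since an equivalence of module categories built from a chain of oppositions is obtained by composing the equivalences for consecutive pairs. So assume from now on that $A$ opposes $B$. The strategy is to exhibit an element $u \in R$ with $u \in e_A R e_B$ and an element $u' \in e_B R e_A$ such that $u' u = e_A$ and $u u' = e_B$; then left multiplication by $u'$ (followed by the obvious identifications) will give mutually inverse functors between $e_A R e_A$-mod and $e_B R e_B$-mod. Concretely, for an $e_A R e_A$-module $M$, one sets $\Phi(M) = (e_B R e_A) \otimes_{e_A R e_A} M$, and similarly $\Psi(N) = (e_A R e_B) \otimes_{e_B R e_B} N$; the existence of $u, u'$ with the stated products shows these are Morita-style inverse equivalences, since $e_A$ and $e_B$ become, in effect, isomorphic idempotents in $R$.

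The key computation is therefore to produce these elements. The natural candidates come from the localisation: recall $s_{AB}$ is the inverse of $e_A e_B e_A + (1 - e_A)$, and by~\eqref{eq:lr-inverse} we have $e_A s_{AB} = s_{AB} e_A = e_A s_{AB} e_A$, together with $e_A e_B e_A s_{AB} = s_{AB} e_A e_B e_A = e_A$. I would set $u = e_A e_B$ and $u' = e_B e_A s_{AB} = s_{AB} e_B e_A$ — or rather, I should be careful about which side the inverse sits on, so I would set $u' = e_B e_A s_{AB}$ and check $u' u = e_B e_A s_{AB} e_A e_B$. Using $s_{AB} e_A = e_A s_{AB} e_A$ this equals $e_B e_A s_{AB} e_A e_B$; I then want this to equal $e_B$. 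Since $s_{AB} e_A e_B e_A = e_A$ does not directly fit, the cleaner choice is to use the symmetric element $s_{BA}$ inverting $e_B e_A e_B + (1 - e_B)$ and verify that $(e_B e_A s_{AB})(e_A e_B) = e_B$ by sandwiching: multiply the defining relation $s_{AB}(e_A e_B e_A + (1-e_A)) = 1$ on the left by $e_B e_A$ and on the right by $e_A e_B$, and simplify using idempotency of $e_A$, $e_B$ and the collinearity relation (R2) applied to $A$, $C$, $B$ — where $C$ is the face of codimension $d-1$ through which $A$ opposes $B$ — which should let me collapse products like $e_B e_A e_B e_A e_B$. This identity, $e_A e_B e_A e_B e_A = $ (up to the localised inverse) $e_A$, is precisely the content of the invertibility hypothesis.

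I expect the main obstacle to be the bookkeeping around which two-sided multiplications genuinely force $u'u = e_A$ and $uu' = e_B$ simultaneously, rather than just one of them: the localisation only formally inverts $e_A e_B e_A + (1-e_A)$ as an element of $e_A R e_A$, so I must confirm that the "same" localisation also makes $e_B e_A e_B$ invertible within $e_B R e_B$. This should follow from \autoref{lem:equal-ideals} applied in both directions (so $Re_AR = Re_BR$ as two-sided ideals), or more directly by noting that $A$ opposes $B$ if and only if $B$ opposes $A$, so $s_{BA}$ lies in $R$ by construction; then one checks $e_B e_A s_{AB} e_A e_B = e_B$ and $e_A e_B s_{BA} e_B e_A = e_A$ are two separate but parallel computations, and that $e_B e_A s_{AB}$ and $e_A e_B s_{BA} $ are in fact mutually inverse (which reduces, again, to (R2) and idempotency). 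Once the two elements and their products are in hand, the equivalence of the module categories is the standard fact that isomorphic idempotents give Morita-equivalent corner rings, which I would state and cite rather than reprove.
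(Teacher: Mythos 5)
Your route is genuinely different from the paper's. The paper deduces the corollary from \autoref{lem:equal-ideals} together with \autoref{thm:ring-recollement}: since $Re_AR = Re_BR$, the recollements determined by $e_A$ and by $e_B$ have the same left-hand term, so $e_ARe_A$-mod and $e_BRe_B$-mod are both identified with the same Serre quotient of $R$-mod. You instead try to show that $e_A$ and $e_B$ are \emph{equivalent idempotents} of $R$; if that is established, you get the stronger conclusion that $e_ARe_A$ and $e_BRe_B$ are isomorphic rings (via $x\mapsto vxu$), not merely that their finite-dimensional module categories agree. The reduction to a single opposition is fine, and the elements $u = e_Ae_B \in e_ARe_B$ and $v = e_Be_As_{AB} \in e_BRe_A$ are the right ones (note only that with these conventions $uv$ lies in $e_ARe_A$ and must equal $e_A$, while $vu \in e_BRe_B$ must equal $e_B$; you have the two products attached to the wrong idempotents at the outset). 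One of the two identities is immediate: $uv = e_Ae_Be_Be_As_{AB} = e_Ae_Be_As_{AB} = e_A$ by the defining property of $s_{AB}$.

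The gap is in the other identity, $vu = e_Be_As_{AB}e_Ae_B = e_B$, which you leave as a sketch, and the mechanism you lean on cannot work: relation (R2) applies only to \emph{collinear} triples, and $B,A,B$ is not collinear (a line segment cannot meet $A$, then $B$, then $A$ again when $A$ and $B$ oppose each other), so (R2) does not collapse words such as $e_Be_Ae_Be_Ae_B$. The identity is nonetheless true, and the ingredient you name in passing---that $\sigma_{BA} = e_Be_Ae_B + (1-e_B)$ is also inverted in $R$, since opposition is symmetric---is exactly what closes it. Set $x = e_Be_As_{AB}e_Ae_B$. Then $x \in e_BRe_B$, so $(x-e_B)(1-e_B) = 0$; and $xe_A = e_Be_A\,(s_{AB}e_Ae_Be_A) = e_Be_A$, so $(x-e_B)e_A = 0$, whence $(x-e_B)e_Be_Ae_B = (x-e_B)e_Ae_B = 0$. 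Adding, $(x-e_B)\sigma_{BA} = 0$, and right multiplication by $s_{BA}$ gives $x = e_B$. With this step supplied your argument is complete, and it in fact proves more than the paper's corollary states.
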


\begin{proof}
  %Let $e_i$ be the idempotent corresponding to $F_i$, where $F_1$ and $F_2$ are faces related by successive oppositions.
  By \autoref{thm:ring-recollement}, we have the following two recollements:
  \begin{center}
    \recollement{R/Re_A R\mods}{R\mods}{e_A Re_A\mods}{}{}{}{}{}{}
    \recollement{R/Re_B R\mods}{R\mods}{e_B Re_B\mods}{}{}{}{}{}{}
  \end{center}
  By \autoref{lem:equal-ideals}, we know that $Re_AR = Re_BR$,and so the categories on the left of both recollements are equal.
  Consequently, the three pairs of corresponding functors on the left are also equal in both cases.
  By general properties of recollements, the categories $e_ARe_A$-mod and $e_BRe_B$-mod are both equivalent to the quotient category of $R$-mod by the image of $e_ARe_A$-mod (or equivalently $e_BRe_B$-mod).
\end{proof}

We recall the following definition~\cite[Definition 4.1]{psa.vit:14}.
\begin{definition}
Two recollements $(\calA', \calA, \calA'')$ and $(\calB',\calB,\calB'')$ are said to be \emph{equivalent} if we have equivalences of categories $F'\colon \calA' \to \calB'$, $F\colon \calA \to \calB$ and $F''\colon \calA'' \to \calB''$ that commute with the six recollement functors up to natural equivalence, as shown in \autoref{fig:equivalence}.
\end{definition}
\begin{figure}[h]
  \centering
  \begin{tikzcd}[ampersand replacement=\&, column sep=large, row sep=huge]
    \calA'\arrow{d}[swap]{F'} \arrow{r} {i_*} \& \calA\arrow{d}[swap]{F} \arrow[bend right]{l}[swap]{i^*} \arrow[bend left]{l}{i^!}  \arrow{r}{j^*} \& \calA'' \arrow{d}{F''} \arrow[bend right]{l}[swap]{j_!} \arrow[bend left]{l}{j_*}\\
    \calB' \arrow{r} {i_*} \& \calB\arrow[bend right]{l}[swap]{i^*} \arrow[bend left]{l}{i^!}  \arrow{r}{j^*} \& \calB'' \arrow[bend right]{l}[swap]{j_!} \arrow[bend left]{l}{j_*}     
  \end{tikzcd}
  % \recollement{\calA'}{\calA}{\calA''}{i_*}{i^*}{i^!}{j^*}{j_!}{j_*}
  \caption{An equivalence of two recollements.}\label{fig:equivalence}
\end{figure}
In fact, Lemma 4.2 of~\cite{psa.vit:14} proves that two recollements as above are equivalent if and only if there are equivalences of categories $F\colon \calA \to \calB$ and $F''\colon \calA''\to \calB''$ such that the functors $j^*F$ and $F''j^*$ are naturally equivalent.
\begin{lemma}\label{lem:almost-equivalence}
  Suppose that $(\calA', \calA, \calA'')$ and $(\calB',\calB,\calB'')$ are two recollements.
  Suppose that there are functors $F\colon \calA \to \calB$, $F'\colon \calA'\to \calB'$, and $F''\colon \calA' \to \calB'$ such that the following conditions hold.
  \begin{compactenum}[(1)]
  \item The functors $F$ and $F'$ are equivalences of categories.
  \item The functors $j^*F$ and $F''j^*$ are naturally equivalent.
  \item The functors $i_*F'$ and $Fi_*$ are naturally equivalent.
  \end{compactenum}
  Then the functor $F''$ is full and essentially surjective.
\end{lemma}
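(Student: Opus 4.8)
The plan is to prove that $F''$ is essentially surjective and full as two separate assertions, using throughout two standard facts about an arbitrary recollement $(\calC',\calC,\calC'')$. First, composing the $(j_!,j^*)$-adjunction with the isomorphism $j^*j_*\cong\id_{\calC''}$ gives a bijection $\mor_{\calC''}(X,Y)\cong\mor_\calC(j_!X,j_*Y)$, natural in $X,Y\in\calC''$. Second, $\mor_\calC(j_!X,i_*Z)\cong\mor_{\calC''}(X,j^*i_*Z)=0$ and, dually, $\mor_\calC(i_*Z,j_*Y)=0$; hence an object of the form $j_!X$ has no nonzero quotient annihilated by $j^*$, and an object of the form $j_*Y$ has no nonzero subobject annihilated by $j^*$. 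I would also record at the outset the following consequence of hypotheses (1) and (3): since $i_*F'\cong Fi_*$, $F'$ is essentially surjective, and $F$ is fully faithful, $F$ restricts to an equivalence from $\{A\in\calA:j^*A=0\}$ (the essential image of $i_*\colon\calA'\to\calA$, by recollement axiom (4)) onto $\{B\in\calB:j^*B=0\}$. Consequently $F$ and a chosen quasi-inverse $G$ send subobjects and quotient objects annihilated by $j^*$ on one side to subobjects and quotient objects annihilated by $j^*$ on the other.

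For essential surjectivity, let $B''\in\calB''$, form $j_!B''\in\calB$, and use essential surjectivity of $F$ to write $j_!B''\cong FA$ for some $A\in\calA$. Then hypothesis (2) and the recollement isomorphism $j^*j_!\cong\id_{\calB''}$ give $F''(j^*A)\cong j^*(FA)\cong j^*j_!B''\cong B''$, so $B''$ lies in the essential image of $F''$.

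For fullness, fix $A_1'',A_2''\in\calA''$ and set $P_i=j_!A_i''$, $Q_i=j_*A_i''$ in $\calA$. Hypothesis (2), together with $j^*j_!\cong\id$ and $j^*j_*\cong\id$, yields canonical isomorphisms $j^*(FP_1)\cong F''A_1''$ and $j^*(FQ_2)\cong F''A_2''$; transposing the first across the $(j_!,j^*)$-adjunction of $\calB$ and the second across the $(j^*,j_*)$-adjunction of $\calB$ produces canonical morphisms $\epsilon\colon j_!(F''A_1'')\to FP_1$ and $\eta\colon FQ_2\to j_*(F''A_2'')$. The first-paragraph bijections identify $\mor_\calA(P_1,Q_2)$ with $\mor_{\calA''}(A_1'',A_2'')$ and $\mor_\calB\bigl(j_!(F''A_1''),j_*(F''A_2'')\bigr)$ with $\mor_{\calB''}(F''A_1'',F''A_2'')$, and I claim that under these identifications the composite
\[
\mor_\calA(P_1,Q_2)\xrightarrow{\ F\ }\mor_\calB(FP_1,FQ_2)\xrightarrow{\ \eta\circ-\circ\epsilon\ }\mor_\calB\bigl(j_!(F''A_1''),\,j_*(F''A_2'')\bigr)
\]
becomes the map induced by $F''$. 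Granting this, the $F$-arrow is a bijection since $F$ is fully faithful, so fullness of $F''$ reduces to surjectivity of $\psi\mapsto\eta\circ\psi\circ\epsilon$.

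To prove that surjectivity, note first that $j^*\epsilon$ and $j^*\eta$ are isomorphisms (a short computation with the triangle identities and the recollement isomorphisms $j^*j_!\cong\id$, $j^*j_*\cong\id$), so by exactness of $j^*$ the kernels and cokernels of $\epsilon$ and $\eta$ are annihilated by $j^*$. Since $FQ_2=F(j_*A_2'')$ has no nonzero subobject annihilated by $j^*$ (by the first paragraph, applied in $\calA$ and transported by $F$), $\eta$ is a monomorphism; dually, since $FP_1=F(j_!A_1'')$ has no nonzero quotient annihilated by $j^*$, $\epsilon$ is an epimorphism. Given any $\psi'\colon j_!(F''A_1'')\to j_*(F''A_2'')$, its composite with the projection onto $\operatorname{coker}\eta$ is a map from $j_!(F''A_1'')$ into an object annihilated by $j^*$, hence zero; so the image of $\psi'$ lies in the image of $\eta$, and as $\eta$ is monic we may write $\psi'=\eta\circ\psi''$ with $\psi''\colon j_!(F''A_1'')\to FQ_2$. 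Precomposing $\psi''$ with the inclusion $\ker\epsilon\hookrightarrow j_!(F''A_1'')$ gives a map into $FQ_2$ whose image is a $j^*$-annihilated subobject of $FQ_2$, hence zero; so $\psi''$ kills $\ker\epsilon$ and therefore factors through the epimorphism $\epsilon$ as $\psi''=\phi\circ\epsilon$, whence $\psi'=\eta\circ\phi\circ\epsilon$, as needed. The step I expect to be the main obstacle is the claim in the previous paragraph that the displayed composite of Hom-maps computes $F''$: proving it means unwinding the constructions of $\epsilon$ and $\eta$ through the triangle identities of the two adjunctions of $\calB$ and matching them against the natural isomorphism $j^*F\cong F''j^*$ of hypothesis (2). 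Everything else — essential surjectivity, the epi/mono properties of $\epsilon$ and $\eta$, and the final diagram chase — is short once the equivalence $\{A:j^*A=0\}\simeq\{B:j^*B=0\}$ induced by $F$ is in place.
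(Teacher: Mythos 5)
Your proof is correct, but your treatment of fullness takes a genuinely different route from the paper's, and it is worth comparing the two. The paper chooses $A_i$ with $j^*A_i\cong A_i''$, sets $B_i=F(A_i)$, and lifts the resulting morphism $g''\colon j^*B_1\to j^*B_2$ to $\calB$ by asserting that $j^*$ is a full functor; the lift is then pulled back through $F$ and pushed down by $j^*$. You instead realise $\mor_{\calA''}(A_1'',A_2'')$ as $\mor_{\calA}(j_!A_1'',j_*A_2'')$, compare $j_!(F''A_1'')$ and $j_*(F''A_2'')$ with $F(j_!A_1'')$ and $F(j_*A_2'')$ via the canonical maps $\epsilon$ (shown to be epi) and $\eta$ (shown to be mono), and factor an arbitrary $\psi'$ through them. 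What your version buys is that it never requires $j^*$ to be full on all of $\calB$ --- a property that fails for general recollements of abelian categories (for the path algebra $A$ of the quiver $1\to 2$ with $e=e_1$ as in \autoref{thm:ring-recollement}, the simple $S_1$ and the projective $P_1$ at vertex $1$ satisfy $\mor_A(S_1,P_1)=0$ while $eS_1\cong eP_1\neq 0$) --- but only the bijection $\mor_{\calB}(j_!X,j_*Y)\cong\mor_{\calB''}(X,Y)$ together with the vanishing of Homs out of $j_!(-)$ into, and into $j_*(-)$ from, $j^*$-null objects; this is also exactly where hypothesis (3) enters your argument, through the induced equivalence of the $j^*$-null subcategories, whereas the paper's written proof of fullness does not invoke (3). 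The one step you defer, namely that $\eta\circ F(-)\circ\epsilon$ computes $F''$ under the two Hom-identifications, does go through as you predict: the triangle identities show that $j^*\epsilon$ and $j^*\eta$ are the canonical isomorphisms, and naturality of $j^*F\cong F''j^*$ applied to $h\colon j_!A_1''\to j_*A_2''$ identifies $j^*F(h)$ with $F''(j^*h)$, which is $F''$ of the morphism of $\calA''$ corresponding to $h$. Your essential-surjectivity argument coincides with the paper's up to replacing $j_*$ by $j_!$.
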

\begin{proof}
  Let $(\calA', \calA, \calA'')$ and $(\calB',\calB,\calB'')$ be two recollements, and suppose that the conditions of the lemma hold.
  First we show that $F''$ is full.
  Let $A''_1$ and $A''_2$ be objects of $\calA''$, and let $B''_i = F''(A''_i)$ for $i \in \{1,2\}$.
  Suppose that there is a morphism $f'' \colon B''_1 \to B''_2$ in $\calB''$.
  We will exhibit a lift of $f''$ to a morphism from $A''_1$ to $A''_2$ in $\calA''$.

  Since $j^*j_*$ is isomorphic to $\id_{\calA''}$ on $\calA''$, we can find objects $A_1$ and $A_2$ of $\calA$ such that $j^*(A_i) = A''_i$.
  Let $B_i = F(A_i)$ for $i \in \{1,2\}$.
  Then $j^*B_i = j^*F(A_i)$ is naturally isomorphic to $B_i'' = F''j^*(A_i)$ for each $i$.
  Let $g'' \colon j^*B_1 \to j^*B_2$ denote the map corresponding to $f''$ under the natural isomorphism.
  Since $j^*$ is a full functor on $\calB''$, the morphism $g''$ has a lift $g \colon B_1 \to B_2$ in $\calB$.
  Similarly since $F$ is an equivalence, $g$ has a lift $\widetilde{g} \colon A_1 \to A_2$ in $\calA$.
  Now we can check that $j^*\widetilde{g} \colon A''_1 \to A''_2$ is a morphism in $\calA''$ that lifts $f''\colon B_1''\to B_2''$.
  
  Next we check that $F''$ is essentially surjective.
  Let $B''$ be any object of $\calB''$.
  Recall that the counit morphism $j^*j_*B'' \to B''$ is an isomorphism.
  Since $F$ is essentially surjective, there is some $A \in \calA$ such that $F(A) \cong j_*B''$.
  Let $A'' = j^*A$.
  Then we see that
  \[
    F''(A'') \cong j^*F(A) \cong j^*j_*B'' \cong B''.
  \]
\end{proof}

\begin{proposition}\label{prop:pi1-homomorphism}
  Fix $F \in \calC^0$ and let $e = e_F$.
  Let $U$ be the open stratum in $\calS$, which is also the complex span of $F$.
  Let $\pi_1(U,e)$ denote the fundamental group of $U$ with the basepoint chosen to be some point of $F$.
  There is a ring homomorphism $\iota\colon \bbC[\pi_1(U,e)] \to eRe$.
\end{proposition}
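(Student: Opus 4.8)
The plan is to construct $\iota$ from a combinatorial presentation of $\pi_1(U,e)$ adapted to the face poset $\calC$, taking the basepoint to be a point $x_F$ of the chamber $F$. I would first recall the standard model for the fundamental group of the complement of a complexified real arrangement (the Salvetti complex; cf.\ work of Deligne and Paris). Let $\Pi$ be the fundamental groupoid of $U$ with one object $x_C$ for each chamber $C \in \calC^0$. For each ordered pair of chambers $C, C'$ separated by a single codimension-one wall there is a distinguished ``positive half-turn'' $\delta_{C,C'}\colon x_C \to x_{C'}$, obtained by pushing a path off $\bbR^n$ across the wall with, say, positive imaginary part; $\Pi$ is generated by the $\delta_{C,C'}$, and a complete set of relations is carried by the codimension-two faces. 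Explicitly, a codimension-two face determines a rank-two central sub-arrangement with chambers $D_0, D_1, \dots, D_{2m-1}$ in cyclic order, and for each index $i$ the two minimal galleries from $D_i$ to the opposite chamber $D_{i+m}$ --- one passing through $D_{i+1}, \dots, D_{i+m-1}$ and the other through $D_{i-1}, \dots, D_{i-m+1}$, each crossing all $m$ walls once --- are identified in $\Pi$. Then $\pi_1(U,e) = \operatorname{Aut}_\Pi(x_F)$, and a closed gallery $F = C_0, C_1, \dots, C_m = F$ of chambers, consecutive ones adjacent, represents the loop $\delta_{C_{m-1},C_m}\cdots\delta_{C_1,C_2}\delta_{C_0,C_1}$.

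I would then define $\iota$ on $\Pi$ and restrict. Send the generator $\delta_{C,C'}$ to the element $e_{C'}e_C$, which lies in the corner $e_{C'} R e_C$, so that composing morphisms of $\Pi$ corresponds to multiplying elements in $R$ and the closed gallery above is sent to $e_{C_m} e_{C_{m-1}} \cdots e_{C_1} e_{C_0} = e\, e_{C_{m-1}} \cdots e_{C_1}\, e \in eRe$. Two checks are needed. First, for $\iota$ to extend over the inverses in $\Pi$ each element $e_{C'}e_C$ must be invertible in the relevant corners; but $C$ and $C'$, being the two chambers on either side of a wall, oppose one another, so the noncommutative localisation defining $R$ makes $e_C e_{C'} e_C$ a unit of $e_C R e_C$ and $e_{C'} e_C e_{C'}$ a unit of $e_{C'} R e_{C'}$ (these are the identities displayed just before~\eqref{eq:lr-inverse}), and then $e_C e_{C'}(e_{C'} e_C e_{C'})^{-1}$ and $(e_C e_{C'} e_C)^{-1} e_C e_{C'}$ are, respectively, a right and a left inverse of $e_{C'}e_C$ in $e_C R e_{C'}$. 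Second, $\iota$ must annihilate the codimension-two relations: with the notation above one must verify that the two composites
\[
  e_{D_{i+m}} e_{D_{i+m-1}} \cdots e_{D_{i+1}} e_{D_i}
  \qquad\text{and}\qquad
  e_{D_{i-m}} e_{D_{i-m+1}} \cdots e_{D_{i-1}} e_{D_i}
\]
agree in $R$, where indices are read modulo $2m$ so that $D_{i-m} = D_{i+m}$. Here I would use relation (R2): a generic affine line that misses the center of the rank-two arrangement passes through $m+1$ consecutive sectors, so every chamber lying strictly between $D_i$ and $D_{i+m}$ on a given side is collinear with $D_i$ and $D_{i+m}$; applying (R2) repeatedly then deletes each intermediate idempotent, and both composites collapse to $e_{D_{i+m}} e_{D_i}$. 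Hence $\iota$ descends to a functor from $\Pi$ to the category with objects $\calC^0$ and morphism spaces the corners of $R$, and restricting to $\operatorname{Aut}_\Pi(x_F)$ gives a group homomorphism $\pi_1(U,e) \to (eRe)^\times$ that sends the trivial loop to $e = 1_{eRe}$. Extending $\bbC$-linearly yields the desired $\iota\colon \bbC[\pi_1(U,e)] \to eRe$; I would fix the composition convention at the outset so that this is a homomorphism and not an anti-homomorphism.

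I expect the principal difficulty to be bookkeeping rather than substance: one must be careful to invoke a genuinely complete presentation of the fundamental group of the complexified complement, and to match the reversed wall-crossings with the correct units of $eRe$ built from the elements $s_{AB}$. Once that is in place, both the verification of the codimension-two relations via (R2) and the check of multiplicativity are routine and short.
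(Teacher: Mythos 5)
Your proposal is correct and follows essentially the same route as the paper: both use the Salvetti presentation of the fundamental groupoid with chambers as objects, send a positive wall-crossing $C \to C'$ to $e_{C'}e_C$, handle the reversed crossings via the localised inverses $s_{AB}$ (your corner-inverse $(e_Ce_{C'}e_C)^{-1}e_Ce_{C'}$ is the same element the paper writes as $s_{C'C}e_{C'}$ up to the idempotent bookkeeping), and kill the codimension-two (Zifferblatt) relations by telescoping with relation (R2). No substantive differences.
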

\begin{proof}
  To prove this proposition, we use the Salvetti presentation of the fundamental groupoid $\pi_1(U)$, as described originally in~\cite{sal:87} and also in~\cite[Proposition 9.12]{kap.sch:16}.

  We recall this presentation here.
  There is one object $x_A$ in $\pi_1(U)$ for each $A\in \calC^0$.
  The generating morphisms are $\psi_{AB}\colon x_A\to x_B$ for each ordered pair $(A,B)$ of opposing objects in $\calC^0$.
  Let $\psi^-_{AB}\colon x_A\to x_B$ denote the inverse of the generating morphism $\psi_{BA}\colon B\to A$.
  The relations are as follows.
  Let $F$ be any face in $\calC^2$ (i.e., a face of codimension two).
  As discussed in~\cite[Example 7.9]{kap.sch:16}, let $A$ and $C$ be two elements of $\calC^0$ such that $F < A$, $F < C$, and $C = -A$ is the opposite chamber to $A$.
  Then we can label the chambers around $F$ by $A = B_1, B_2, \ldots, B_{m+1} = C, B_{m+2}, \ldots, B_{2m}$, where any two successive chambers oppose each other, and $B_{2m}$ opposes $A = B_1$.
  For each such instance, we have the \emph{Zifferblatt relation} in $\pi_1(U)$:
  \[
    \psi_{B_m,C}\psi_{B_{m-1},B_m}\cdots\psi_{A,B_2} = \psi_{B_{m+2},C}\psi_{B_{m+3},B_{m+2}}\cdots\psi_{A,B_{2m}}.
  \]
  
  Now if we fix a basepoint in the real chamber $F\in \calC^0$, we can consider the fundamental group $\pi_1(U,e)$.
  The generators and relations of $\pi_1(U,e)$ can be deduced from the generators and relations of $\pi_1(U)$.
  Elements of $\pi_1(U,e)$ consist of all composable words in the letters $\psi_{AB}$ and $\psi^-_{AB}$ that begin and end at $x_F$, with the relations generated by the Zifferblatt relations discussed above.
  This means that locally within any word, any subword consisting of a minimal path between a chamber and its negative can be substituted by the subword formed by the other instance of the minimal path between them.
  Moreover, any instances of $\psi_{AB}\psi^-_{BA}$ or $\psi^-_{AB}\psi_{BA}$ can be cancelled.

  Now define a function $\iota \colon \pi_1(U,e) \to eRe$ as follows.
  Let $w = \psi^{\sigma_k}_{A_k,F}\psi^{\sigma_{k-1}}_{A_{k-1},A_k}\cdots \psi^{\sigma_1}_{F,A_1}$ be any word in $\pi_1(U,e)$.
  We replace any positively-signed letter $\psi_{AB}$ to the ring element $e_Be_A$.
  We replace any negatively-signed letter $\psi^-_{AB}$ by the ring element $e_Bs_{BA}e_B = s_{BA}e_B = e_Bs_{BA}$.
  Then it is easy to see that any word $w$ as above gets sent to an element $\iota(w)$ of $eRe$.
  We check that the relations are satisfied.
  \begin{compactenum}[(1)]
  \item Suppose that $w = w_1\psi_{AB}\psi^-_{BA}w_2$.
    Then we know that $\iota(w_1) = \iota(w_1)e_B$, since $w_1$ ends at $x_B$.
    Therefore
    \[
      \iota(w) = \iota(w_1)e_Be_Ae_Bs_{BA}\iota(w_2) = \iota(w_1)e_B\iota(w_2) = \iota(w_1)\iota(w_2) = \iota(w_1w_2).
    \]
    Similarly if $w = w_1\psi^-_{AB}\psi_{BA}w_2$, then
    \[
      \iota(w) = \iota(w_1)s_{BA}e_Be_Ae_B\iota(w_2) = \iota(w_1)e_B\iota(w_2) = \iota(w_1)\iota(w_2) = \iota(w_1w_2).
    \]
  \item Suppose that $A,C\in \calC^0$ such that $C = -A$, and $F$ is a codimension two face such that $F < A$.
    Number the faces around $F$ starting at $A$ as $B_1,\ldots,B_{2m}$, as in the Zifferblatt relations.
    Let $w = w_1\psi_{B_m,C}\cdots \psi_{A,B_2}w_2$.
    Then
    \[
      \iota(w) = \iota(w_1)e_Ce_{B_m}e_{B_m}e_{B_{m-1}}\cdots e_{B_2}e_{A}\iota(w_2).
    \]
    Since $B_1,B_2,\ldots,B_m$ are successive opposing chambers, any three consecutive ones are collinear.
    So we can telescope the expression $u = e_ce_{B_m}e_{B_m}e_{B_{m-1}}\cdots e_{B_2}e_A$ to $e_Ce_A$.
    Telescoping in reverse for the other path $A = B_{1},B_{2m},\ldots, B_{m+1} = C$, we see that
    \[
      u = e_Ce_{B_{m+2}}e_{B_{m+2}}e_{B_{m+3}}\cdots e_{B_{2m}}e_A = \iota(\psi_{B_{m+2},C}\psi_{B_{m+3},B_{m+2}}\cdots\psi_{B_{2m},A}).
    \]
    This proves the Zifferblatt relation:
    \begin{align*}
      \iota(w) &= \iota(w_1)\iota(\psi_{B_{m+2},C}\psi_{B_{m+3},B_{m+2}}\cdots\psi_{B_{2m},A})\iota(w_2)\\
      &= \iota(w_1\psi_{B_{m+2},C}\psi_{B_{m+3},B_{m+2}}\cdots\psi_{B_{2m},A}w_2).
    \end{align*}
  \end{compactenum}

  So we have showed that $\iota\colon \pi_1(U,e)\to eRe$ maps the group $\pi_1(U,e)$ into the ring $eRe$.
  We extend by linearity to a ring homomorphism $\iota\colon \bbC[\pi_1(U,e)] \to eRe$, and the proposition is proved.
\end{proof}

\begin{lemma}\label{lem:f-double-prime}
  Let $F \colon R\mods \to \Perv(X,\calS)$ denote the (inverse) equivalence from \autoref{thm:equivalence}.
  Fix any $A \in \calC^0$, and let $e = e_A$.
  There is a faithful functor $F_A''\colon eRe\mods \to Perv(U,\calS|_U)$, such that $j^*F$ and $F_A''j^*$ are naturally equivalent.
\end{lemma}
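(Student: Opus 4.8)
The plan is to realise $\Perv(U,\calS|_U)$ as a representation category of $\pi_1(U,e)$, to build $F_A''$ as restriction of scalars along the homomorphism $\iota$ of \autoref{prop:pi1-homomorphism}, and to read off the compatibility with $F$ from Kapranov--Schechtman's description of $\bfQ$ on the open stratum.

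First I would note that $U$ is smooth and connected and that $\calS|_U$ is the trivial stratification of $U$, so $\Perv(U,\calS|_U)$ is equivalent to the category of finite-rank local systems on $U$, hence to the category $\bbC[\pi_1(U,e)]\mods$ of finite-dimensional $\pi_1(U,e)$-representations; fix an equivalence $L\colon \bbC[\pi_1(U,e)]\mods \to \Perv(U,\calS|_U)$ realising this chain (via the monodromy representation at the chosen basepoint). Next, let $\iota\colon\bbC[\pi_1(U,e)]\to eRe$ be the (unital) ring homomorphism of \autoref{prop:pi1-homomorphism}. Since $\pi_1(U,e)$ is a group, $\iota$ carries it into $(eRe)^\times$, so restriction of scalars along $\iota$ gives a well-defined functor $\iota^*\colon eRe\mods\to\bbC[\pi_1(U,e)]\mods$, and I set $F_A'' := L\circ\iota^*$. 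Faithfulness is immediate: restriction of scalars along any ring homomorphism is the identity on underlying vector spaces, hence faithful, and $L$ is an equivalence.

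It remains to give a natural isomorphism $j^*F\cong F_A''j^*$; since $L$ is an equivalence this is equivalent to a natural isomorphism $L^{-1}(j^*(F(N)))\cong\iota^*(eN)$ of $\pi_1(U,e)$-representations for $N\in R\mods$. Here $F(N)=\bfQ^{-1}(\bfN(N))$, and I would invoke the description of $\bfQ$ restricted to the open stratum from~\cite[\S9]{kap.sch:16}: for $E\in\calA$, the local system $j^*\bfQ^{-1}(E)$ has fibres $E_B$ over points of the chambers $B\in\calC^0$, with the Salvetti generator $\psi_{BC}$ for an ordered pair of adjacent chambers $B,C$ acting by the map $\varphi_{BC}\colon E_B\to E_C$. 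Taking $E=\bfN(N)$, so $E_B=e_BN$, a short computation from the definition of $\bfN$ identifies $\varphi_{BC}$ with multiplication by $e_Ce_B$ and $\varphi_{CB}^{-1}$ with multiplication by $e_Cs_{CB}e_C$, which are exactly $\iota(\psi_{BC})$ and $\iota(\psi^-_{BC})$. Composing around a loop based at $x_A$, one sees that $w\in\pi_1(U,e)$ acts on $E_A=eN$ by multiplication by $\iota(w)$; this is precisely the action on $\iota^*(eN)$, and every step is functorial in $N$, so applying $L$ completes the proof.

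The step I expect to be the main obstacle is the interface with~\cite{kap.sch:16}: the argument depends on quoting the precise form of $\bfQ$ on the open stratum --- the generic stalk/monodromy part of the quiver description --- in conventions matching those used to set up $\iota$ in \autoref{prop:pi1-homomorphism}. Granting that, what is left is the routine bookkeeping in $eRe$ that matches each Salvetti generator, including the formal inverses $\psi^-_{BC}$ (where $s_{CB}$ appears), with the corresponding value of $\iota$; the rest is formal.
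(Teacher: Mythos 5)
Your proposal is correct and follows essentially the same route as the paper: define $F_A''$ as the composite of $\iota^*$ (restriction along the homomorphism of \autoref{prop:pi1-homomorphism}) with the monodromy equivalence for local systems on $U$, get faithfulness for free from restriction of scalars, and match the Salvetti generators $\psi_{BC}$, $\psi^-_{BC}$ with $e_Ce_B$ and $e_Cs_{CB}$. The only difference is one of granularity: where you quote the monodromy description of $j^*\bfQ^{-1}(E)$ from Kapranov--Schechtman, the paper rederives it by computing the kernel sheaf of the explicit complex $\scrE^\bullet(M)$ cell by cell on the cells $iP+Q$ and reading off the transition maps.
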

\begin{proof}
  Recall that $\Perv(U,\calS|_U)$ is the category of perverse sheaves on $U$ whose cohomology sheaves are locally constant on $U$.
  So it is equivalent to the abelian category of local systems on $U$, shifted by the complex dimension of $U$.
  In turn, the category of (shifted) local systems on $U$ is equivalent to the category of finite-dimensional representations of its fundamental group.

  As per \autoref{prop:pi1-homomorphism}, there is a ring homomorphism $\iota\colon \bbC[\pi_1(U,e)] \to eRe$, which induces a restriction functor $\iota^*\colon eRe\mods \to \bbC[\pi_1(U,e)]\mods$.
  Define the functor $F_A''$ to be the composition of the equivalence $\bbC[\pi_1(U,e)]\mods \to \Perv(U,\calS|_U)$ with the restriction $eRe\mods \to \bbC[\pi_1(U,e)]\mods$.

  We now show that the functors $j^*F$ and $F_A''j^*$ are naturally equivalent.
  The functor $F$ sends an $R$-module $M$ to a complex of sheaves $\scrE^\bullet(M)$, which is perverse with respect to $\calS$.
  Therefore $j^*(\scrE^\bullet(M))$ is quasi-isomorphic to a (shifted) local system on $U$, namely its left-most homology sheaf.
  The construction from~\cite{kap.sch:16} produces an explicit form for $\scrE^\bullet(M)$ as follows.
  \begin{equation}
    \label{eq:perverse-complex}
    \scrE^\bullet(M) = \left\{ \bigoplus_{\codim(C) = 0}\scrE_C(M)\otimes \orient(C) \to \bigoplus_{\codim(C) = 1}\scrE_C(M)\otimes \orient(C) \to \cdots \right\}
  \end{equation}
  In the above complex, the sheaves $\scrE_C(M)$ are locally constant on a finer stratification of $\bbC^n$ than $\calS$, and $\orient(C)$ is the orientation local system of $C$.
  We refer the reader to~\cite[Section 6]{kap.sch:16} for the full construction, and recall details here as necessary.
  
  So the complex $j^*F(M) = j^*(\scrE^\bullet(M))$ is quasi-isomorphic to the kernel $K$ of the first map in $j^*(\scrE^\bullet(M))$, as shown below.
  \begin{center}
  \begin{tikzcd}
    K\arrow{d}{\sim}\\
    \displaystyle\bigoplus_{\codim(C) = 0}j^*(\scrE_C(M)\otimes\orient(C))\arrow{r}& \displaystyle\bigoplus_{\codim(C) = 1}j^*(\scrE_C(M)\otimes\orient(C)) \arrow{r}& \cdots   \end{tikzcd}
\end{center}

On the other hand, the functor $F''_Aj^*$ first sends the $R$-module $M$ to the $eRe$-module $eM$, and then to the local system defined by the $\pi_1(U,e)$ representation $\iota^*(eM)$.
We now show that $K$ is isomorphic to this local system.

Recall from~\cite{kap.sch:16} that the sheaves $\scrE_C(M)$ are locally constant on cells of the form $iP + Q$, where $P, Q \in \calC$.
The collection of cells of this form refines the stratification $\calS$.
  So we can compute $K$ cell-wise, using the description of the cell-wise stalks of $\scrE_C(M)$ from~\cite{kap.sch:16}.
  If $C,Q\in \calC$, let $C\circ Q$ denote the minimal face $K$ such that $K \geq C$ and $K + \operatorname{Span}_{\bbR}(C) \supset Q$.
  For any cell of the form $iP + Q$, and any face $C \in \calC$, we recall that 
  \begin{equation}
    \label{eq:stalks}
    \scrE_C(M)|_{iP + Q} = \begin{cases}M_{C\circ Q}&\text{if }P \leq C,\\ 0&\text{otherwise}.\end{cases}
  \end{equation}
  Since we are only concerned with $j^*(\scrE^\bullet(M))$, we disregard any cells $iP + Q$ that do not lie in the open stratum $U$.
  These are precisely those $iP + Q$ such that both $P$ and $Q$ are faces of real codimension greater than or equal to $1$.
  In what follows, we only consider cells $iP + Q$ where either $P \in \calC^0$ or $Q \in \calC^0$.

  Let $P\in \calC^0$ be a codimension-zero face, and let $Q\in \calC$ be any other face.
  The description of the stalks in \autoref{eq:stalks} implies that for any $C \in \calC$, the stalk of the sheaf $\scrE_C(M)$ at $iP+Q$ equals $M_P$ if $C = P$, and zero otherwise.
  In particular, we conclude that the stalk of all of $K$ at $iA + Q$ is precisely $M_A = e_AM$.
  Since $K$ is a local system, its stalks at all points of $U$ must be isomorphic to $e_AM$.
  It remains to calculate the monodromy maps.

  Consider any non-trivial element $\ell$ of $\pi_1(U,e)$, which can be written as follows
  \[
    \ell = \psi^{\sigma_{n-1}}_{A_{n-1},A}\psi^{\sigma_{n-1}}_{A_{n-2},A_{n-1}}\cdots \psi^{\sigma_0}_{A,A_1}.
  \]
  Here, $A = A_0, A_1,\ldots, A_{n-1}, A_n = A$ are elements of $\calC^0$ such that $A_k$ and $A_{k+1}$ oppose each other through the codimension-one face $W_k$.
  Then $\ell$ can be represented as a loop in $U$ that begins and ends in the cell $iA + \{0\}$.
  Further, up to homotopy we can choose a representative such that each letter $\psi^{\sigma_k}_{A_k,A_{k+1}}$ is represented by a pair of straight-line segments with the following properties.
  \begin{compactenum}[(1)]
  \item The first segment goes from $iA_k + \{0\}$ to $iW_k + Q_k$ for a certain $Q_k \in \calC^0$.
  \item The second segment goes from $iW_k + Q_k$ to $iA_{k+1} + \{0\}$.
  \item If the sign $\sigma_k$ is positive, then $Q_k = A_k$, otherwise $Q_k = A_{k+1}$.
\end{compactenum}
To calculate the monodromy, it is sufficient to compute the transition maps corresponding to each letter in the word $\ell$.

Consider the letter $\psi^{\sigma_k}_{A_k,A_{k+1}}$.
This corresponds to a transition map from the stalk of $K$ at $iA_k + \{0\}$ to the stalk of $K$ at $iA_{k+1}+ \{0\}$, via the stalk at $iW_k + Q_k$.
As observed earlier, for $\ast \in \{k, k+1\}$, the stalk of $\scrE_C(M)$ at $A_\ast$ equals $M_{A_\ast}$ if $C = A_\ast$, and zero otherwise.
Similarly, observe from \autoref{eq:stalks} that
\[
  \scrE_C(M)|_{iW_k + Q_k} = \begin{cases}M_{A_\ast}&\text{ if }C = A_{\ast}\text{ for }\ast \in \{k,k+1\}\\
    M_{Q_k}&\text{ if }C = W_k\\
    0&\text{ otherwise.}
  \end{cases}
\]
So the stalk of $K$ at $iA_k + \{0\}$ is just $M_{A_k} = e_{A_k}M$, and the stalk of $K$ at $iA_{k+1} + \{0\}$ is just $M_{A_{k+1}} = e_{A_{k+1}}M$.
The stalk of $K$ at $iW_k + Q_k$ is the kernel of the following two-term complex:
\[
  e_{A_k}M \oplus e_{A_{k+1}}M \to M_{Q_k},
\]
where $M_{Q_k} = e_{A_k}$  if $\sigma = +1$ and $M_{Q_k} = e_{A_{k+1}}$ if $\sigma = -1$.
The restriction map $e_{A_k}M \to e_{A_k}M$ is the identity, while the restriction $e_{A_k}M\to e_{A_{k+1}}M$ is the map $e_{A_{k+1}}e_{A_k}$.
Similarly the restriction map $e_{A_{k+1}}M \to e_{A_{k+1}}M$ is the identity, while the restriction $e_{A_{k+1}}M\to e_{A_k}M$ is the map $e_{A_k}e_{A_{k+1}}$.
Note that we have ignored orientations here---the orientation local systems are trivial because each cell is contractible, and the maps on the orientation local systems are canonical.
To compute the kernel, it is only relevant that the restriction maps above have opposite signs, since $A_k$ and $A_{k+1}$ lie on opposite sides of $W_k$.

If $\sigma = +1$, then the stalk of $K$ at $iW_k + Q_k$ is
\[
  \{(x,y) \in e_{A_k}M \oplus e_{A_{k+1}}M \mid x = e_{A_k}e_{A_{k+1}}y\}.
\]
If $\sigma = -1$, then the stalk of $K$ at $iW_k + Q_k$ is
\[
  \{(x,y) \in e_{A_k}M \oplus e_{A_{k+1}}M \mid y = e_{A_{k+1}}e_{A_k}x\} =
  \{(x,y) \in e_{A_k}M \oplus e_{A_{k+1}}M \mid x = e_{A_{k+1}}s_{A_{k+1},A_k}y\}.
\]
So the transition map $\psi^{\sigma_k}_{A_k,A_{k+1}}\colon e_{A_k}M \to e_{A_{k+1}}M$ is
\[
  \psi^{\sigma_k}_{A_k,A_{k+1}} = \begin{cases}x \mapsto e_{A_k}e_{A_{k+1}}x &\text{ if }\sigma_k = +1,\\
    x \mapsto e_{A_{k+1}}s_{A_{k+1},A_k}x,&\text{ if }\sigma_k = -1.
  \end{cases}
\]
Composing the transition maps obtained for each letter $\psi^{\sigma_k}_{A_k,A_{k+1}}$ in $\ell$, we see that this is exactly how we obtained the action of $\ell$ on $\iota^*(eM)$ as constructed in \autoref{prop:pi1-homomorphism}.
This argument shows that $j^*F$ is naturally isomorphic to $F''j^*$.

Finally, we show that $F''$ is faithful.
\begin{comment}
The restriction functor
\[
  \iota^*\colon eRe\mods \to \bbC[\pi_1(U,e)]\mods
\]
induces the following commutative diagram.
\begin{center}
  \begin{tikzcd}
    eRe\mods \arrow{r}{\iota^*} \arrow{d}[swap]{\operatorname{Forget}} & \bbC[\pi_1(U,e)]\mods \arrow{d}{\operatorname{Forget}}\\
    \Vect_{\bbC} \arrow{r}{\id} & \Vect_{\bbC}
  \end{tikzcd}
\end{center}
The vertical maps are forgetful functors to the category of vector spaces, and the lower horizontal map is the identity functor.
In other words, an $eRe$-module $M$ and the corresponding $\bbC[\pi_1(U,e)]$-module $\iota^*(M)$ have the same underlying vector space.
So if $f \colon M_1 \to M_2$ is any morphism of $eRe$-modules, then $f$ and $\iota^*(f)$ correspond to identical linear maps, which shows that $\iota^*$ is faithful.
\end{comment}
The functor $\iota^*$ is faithful because it induces the identity functor on the underlying categories of vector spaces.
Since $F''$ is constructed as the composition of an equivalence of categories with $\iota^*$, it is faithful.
\end{proof}

\begin{theorem}%[Baby version of the theorem]
  \label{thm:recollement-equivalence}
  Let $U$ be the open stratum of $\calS$, namely the complement of all of the complex hyperplanes in $\calH$.
  Then $U\cap \bbR^n$ is a disjoint union of all the top-dimensional faces of $\calC$.
  Let $V = \bbC^n \setminus U$.
  Fix any $A \in \calC^0$, and let $e = e_A$.
  Then we have an equivalence of recollements as follows.
  \begin{center}
    \begin{tikzcd}[ampersand replacement=\&]
      R/ReR\mods\arrow{d}[swap]{F_A'} \arrow{r} \& R\mods\arrow{d}[swap]{F} \arrow[shift right=1.5]{l} \arrow[shift left=1.5]{l}  \arrow{r} \& eRe\mods\arrow{d}{F_A''} \arrow[shift right=1.5]{l} \arrow[shift left=1.5]{l}\\
      \Perv(V,\calS|_V) \arrow{r} \& \Perv(X,\calS) \arrow[shift right=1.5]{l} \arrow[shift left=1.5]{l}  \arrow{r} \& \Perv(U,\calS|_U) \arrow[shift right=1.5]{l} \arrow[shift left=1.5]{l}     
    \end{tikzcd}
  \end{center}
\end{theorem}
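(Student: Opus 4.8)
The plan is to assemble the equivalence of recollements from ingredients already established. By \autoref{thm:equivalence} the functor $F\colon R\mods \to \Perv(X,\calS)$ is an equivalence; by \autoref{thm:ring-recollement} applied to the idempotent $e = e_A$ we have the module-theoretic recollement $(R/ReR\mods, R\mods, eRe\mods)$; and by \autoref{lem:f-double-prime} there is a faithful functor $F_A''\colon eRe\mods \to \Perv(U,\calS|_U)$ together with a natural isomorphism $j^*F \simeq F_A''j^*$. The two formal tools I would use are the Psaroudakis--Vitória criterion (\cite[Lemma 4.2]{psa.vit:14}), which reduces an equivalence of recollements to exhibiting equivalences on the middle and right-hand terms intertwining $j^*$, and \autoref{lem:almost-equivalence}, which is designed exactly to promote $F_A''$ to an equivalence.

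First I would produce the left-hand functor $F_A'\colon R/ReR\mods \to \Perv(V,\calS|_V)$ together with a natural isomorphism $i_*F_A' \simeq Fi_*$. On the module side, $i_* = \Inc$ identifies $R/ReR\mods$ with the full subcategory of $R\mods$ of modules $M$ with $eM = 0$, that is, with $j^*M = \Res_e M = 0$; on the perverse side, the defining property of $i_*$ in a recollement identifies $\Perv(V,\calS|_V)$, via $i_*$, with the full subcategory of $\Perv(X,\calS)$ of objects $\calF$ with $j^*\calF = 0$. I claim $F$ matches up these two subcategories. Since $F_A''$ is additive and faithful it reflects zero objects, so for an $R$-module $M$ one has $j^*M = 0$ iff $F_A''(j^*M) = 0$ iff $j^*F(M) = 0$, using $j^*F \simeq F_A''j^*$. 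As $F$ is an equivalence it therefore restricts to an equivalence $\{M : eM = 0\} \to \{\calF : j^*\calF = 0\}$; composing with quasi-inverses of the corestricted copies of $i_*$ on either side defines $F_A'$, which is an equivalence as a composite of three equivalences and which satisfies $i_*F_A' \simeq Fi_*$ by construction.

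Next I would apply \autoref{lem:almost-equivalence} with $F' = F_A'$ and $F'' = F_A''$: its three hypotheses hold --- $F$ and $F_A'$ are equivalences, $j^*F \simeq F_A''j^*$ by \autoref{lem:f-double-prime}, and $i_*F_A' \simeq Fi_*$ by the previous step --- so $F_A''$ is full and essentially surjective; together with faithfulness from \autoref{lem:f-double-prime} this makes $F_A''$ an equivalence. Finally, with $F$ and $F_A''$ both equivalences satisfying $j^*F \simeq F_A''j^*$, the Psaroudakis--Vitória criterion yields an equivalence of the two recollements, and the equivalence it induces on left-hand terms, being determined up to natural isomorphism by commuting with the fully faithful $i_*$, coincides with $F_A'$. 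Unwinding these identifications produces exactly the commuting diagram of the statement.

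The genuine mathematical content --- the explicit monodromy computation matching the perverse-sheaf restriction $j^*F(M)$ to the local system built from the Salvetti presentation of $\pi_1(U,e)$ --- has already been carried out in \autoref{lem:f-double-prime}, so the proof of this theorem itself is diagram-chasing with the cited abstract results. I expect the only point needing care to be the first step: one must exploit that $F_A''$ is \emph{faithful}, hence reflects the vanishing $j^*M = 0$, rather than appealing to its fullness, which is not yet available at that stage of the argument.
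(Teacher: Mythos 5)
Your proposal is correct, and its overall skeleton (use \autoref{lem:f-double-prime} for $F_A''$, produce $F_A'$ with $i_*F_A'\simeq Fi_*$, then invoke \autoref{lem:almost-equivalence} and the Psaroudakis--Vitória criterion) is exactly the paper's. The one step you handle genuinely differently is the construction of $F_A'$. The paper builds it explicitly: it takes $M$ with $eM=0$, uses \autoref{lem:equal-ideals} to conclude that \emph{all} idempotents $e_B$ with $B\in\calC^0$ annihilate $M$, and then checks directly from the Kapranov--Schechtman complex $\scrE^\bullet(M)$ and the stalk formula that the resulting perverse sheaf is supported on $V$ (with an explicit inverse built the same way). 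You instead argue purely formally: since $F_A''$ is additive and faithful it reflects zero objects, so $j^*F\simeq F_A''j^*$ gives $eM=0$ iff $j^*F(M)=0$, whence $F$ restricts to an equivalence between the kernels of the two $j^*$'s, which are identified with $R/ReR\mods$ and $\Perv(V,\calS|_V)$ via the respective fully faithful $i_*$'s. This is a clean shortcut that bypasses both \autoref{lem:equal-ideals} and the stalk computation; what it gives up is the explicit description of $F_A'$ as $M\mapsto\scrE^\bullet(M)$, which the paper reuses verbatim in the proof of \autoref{thm:closed-unions} (where the ``kernel of $j^*$'' characterisation is not available for a general closed union of strata). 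Your closing caveat --- that one must use faithfulness of $F_A''$ to reflect vanishing rather than fullness, which is not yet established --- is exactly the right point of care.
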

\begin{proof}
  From the previous lemma, we already have functors $F \colon R\mods \to \Perv(X,\calS)$ and $F_A''\colon eRe\mods \to \Perv(U,\calS|_U)$.
  We now construct an equivalence $F'_A\colon (R/ReR)\mods \to \Perv(V,\calS|_V)$.
  Then we show that these functors give an equivalence of recollements as above.
  
  To define $F'_A$, start with any $M$ in $(R/ReR)\mods$.
  We may think of $M$ as an $R$-module that is annihilated by $e = e_A$.
  Recall from \autoref{lem:equal-ideals} that if $B$ is any other face in $\calC^0$, then $Re_AR = Re_BR$.
  So we may consider $M$ as an $R$-module that is annihilated by all idempotents $\{e_B \mid B \in \calC^0\}$.
  The functor $\bfP\circ \bfN$ from \autoref{thm:equivalence} sends $M$ to the perverse sheaf $\scrE^\bullet(M)$.
  For each $B \in \calC^0$, we have $V_B = e_BV = 0$.
  From this it is easy to check that $\scrE^\bullet(M)$ is term-wise annihilated by $j^*$, and hence is supported on $V$.
  Since the category $\Perv(V,\calS|_V)$ is isomorphic to the full subcategory of $\Perv(X,\calS)$ of objects supported on $V$, this defines the functor $F_A'$.
  By construction, it is clear that $i_*F_A' = Fi_*$.

  To check that $F_A'$ is an equivalence, we define an inverse functor in a similar way.
  If $\scrE^\bullet$ is any perverse sheaf supported on $V$, then the functor $\bfQ$ from \autoref{thm:equivalence} sends $\scrE^\bullet$ to a double quiver representation $(E_C,\gamma_{C'C},\delta_{CC'})$.
  Following~\cite{kap.sch:16}, let $j \colon \bbR^n \to \bbC^n$.
  Then $E_C$ is defined as the global sections of $j^!\scrE$ on $C$.
  Since $\scrE$ is supported on the closed stratum, each $E_C$ is manifestly zero for any $C \in \calC^0$.
  The functor $\bfM$ from \autoref{thm:equivalence} sends the double quiver representation $(E_C,\gamma_{CC'},\delta_{C'C})$ to the $R$-module $E_Z$, where $e_CE_Z = E_C$ for each $C \in \calC$.
  By construction, $e_CE_Z = 0$ for every $C\in \calC^0$, and so we can regard $(\bfM\circ \bfQ)(\scrE)$ as an element of $R/(ReR)\mods$.
  This is the inverse functor to $F_A'$, and so $F_A'$ is an equivalence of categories.
  
  By the construction above and by \autoref{lem:f-double-prime}, we observe that the functors $F$, $F'$, and $F''$ satisfy the conditions of \autoref{lem:almost-equivalence}.
  Therefore $F_A''$ is full and essentially surjective in addition to being faithful, and so it is an equivalence.
  The functors $F$, $F'_A$, and $F''_A$ thus induce an equivalence of recollements as desired.
\end{proof}

As an immediate consequence of the fact that $F''_A$ is an equivalence of categories, we obtain the following corollary.
\begin{corollary}\label{cor:pi1-hom-equivalence}
  The ring homomorphism $\iota$ of \autoref{prop:pi1-homomorphism} induces the pullback functor $\iota^*eRe\mods \to \colon \bbC[\pi_1(U,e)]$ on the respective categories of finite-dimensional modules.
  This functor is an equivalence of categories.
\end{corollary}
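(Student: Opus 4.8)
The plan is to deduce the corollary formally from \autoref{thm:recollement-equivalence} together with the explicit construction of $F''_A$ given in \autoref{lem:f-double-prime}. Recall that in that lemma $F''_A\colon eRe\mods \to \Perv(U,\calS|_U)$ was \emph{defined} as the composite
\[
  eRe\mods \xrightarrow{\ \iota^*\ } \bbC[\pi_1(U,e)]\mods \xrightarrow{\ G\ } \Perv(U,\calS|_U),
\]
where $\iota^*$ is restriction of scalars along the ring homomorphism $\iota\colon \bbC[\pi_1(U,e)]\to eRe$ of \autoref{prop:pi1-homomorphism}, and $G$ is the standard equivalence between finite-dimensional representations of $\pi_1(U,e)$ and perverse sheaves on $U$ smooth with respect to $\calS|_U$ (equivalently, shifted finite-rank local systems on $U$). \autoref{thm:recollement-equivalence} shows that $F''_A = G\circ\iota^*$ is an equivalence of categories.

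First I would fix a quasi-inverse $G^{-1}$ of $G$. Then $G^{-1}\circ F''_A$ is naturally isomorphic to $\iota^*$, and as a composite of two equivalences it is itself an equivalence; hence $\iota^*$ is an equivalence of categories. This is just the two-out-of-three property for composable functors: if $G$ and $G\circ H$ are equivalences, then so is $H$. Faithfulness of $\iota^*$ has already been observed directly in \autoref{lem:f-double-prime} (it induces the identity on underlying vector spaces), but this is not even needed here, since the two-out-of-three argument yields full faithfulness and essential surjectivity simultaneously.

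There is essentially no obstacle: all of the mathematical content is carried by \autoref{thm:recollement-equivalence}, whose proof rests on the construction of $\iota$ via the Salvetti presentation in \autoref{prop:pi1-homomorphism} and on the recollement comparison \autoref{lem:almost-equivalence}. The only point worth stating carefully is the variance of the functors --- a ring homomorphism $\iota\colon \bbC[\pi_1(U,e)]\to eRe$ induces restriction of scalars $\iota^*\colon eRe\mods\to\bbC[\pi_1(U,e)]\mods$ in the direction opposite to $\iota$, which is exactly the direction appearing in $F''_A$ --- so the composite above is well-formed and the conclusion follows.
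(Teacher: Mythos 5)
Your argument is correct and is exactly the paper's: the corollary is stated there as an immediate consequence of $F''_A$ being an equivalence, with $F''_A$ defined as the composite of $\iota^*$ with the standard equivalence between $\bbC[\pi_1(U,e)]\mods$ and $\Perv(U,\calS|_U)$, so two-out-of-three gives that $\iota^*$ is an equivalence. Your write-up just makes this cancellation explicit.
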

Since the finite-dimensional module categories of $\bbC[\pi_1(U,e)]$ and $eRe$ are isomorphic, it is natural to ask whether the map $\iota$ is an isomorphism.
We discuss this question in \autoref{sec:further-questions}.

If $\calL$ is a local system on the open stratum $U$, then the perverse sheaf $j_{!*}(\calL)$ is supported on $X$ and smooth with respect to $\calS$.
It is known as the corresponding \emph{intersection cohomology sheaf}, denoted $\ic(\calL)$.
Using the equivalence of recollements obtained above, we also deduce the following corollary.
\begin{corollary}\label{cor:open-local-systems}
  Let $U$ be the open stratum of $X$, and let $\calL$ be a local system on $U$, corresponding to a representation $L$ of $\pi_1(U,e)$.
  Let $M\in eRe\mods$ be the image of $L$ under the inverse equivalence of $\iota^*$.
  Under the equivalence $\Perv(X,\calS) \cong R\mods$, the intersection cohomology sheaf $\ic(\calL)$ maps to the $R$-module $j_{!*}(M)$, where $j_{!*}$ is the intermediate extension functor from \autoref{def:shriek-star-functor}.
\end{corollary}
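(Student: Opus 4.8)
The strategy is to recognise both intermediate extension functors as instances of a single recollement-theoretic construction, and then to transport it through the equivalence of recollements of \autoref{thm:recollement-equivalence}. Recall that in any recollement $(\calA',\calA,\calA'')$ of abelian categories there is a canonical natural transformation $\theta\colon j_!\Rightarrow j_*$: for $B\in\calA''$ the adjunction isomorphisms $\mor_\calA(j_!B,j_*B)\cong\mor_{\calA''}(B,j^*j_*B)\cong\mor_{\calA''}(B,B)$ carry $\id_B$ to a morphism $\theta_B\colon j_!B\to j_*B$, and $j_{!*}(B):=\operatorname{Im}(\theta_B)$ defines a functor $\calA''\to\calA$. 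On the perverse side this recovers the intersection cohomology sheaf, $j_{!*}(\calL)=\ic(\calL)$ (the standard description from \cite{bel.ber.del:82}); on the module side one checks that the explicit morphism $j_!M\to j_*M$ written down in \autoref{def:shriek-star-functor} is precisely $\theta_M$, so that the two notions of $j_{!*}$ agree. This last verification is a short computation with the formulas $j_!M=Re\otimes_{eRe}M$ and $j_*M=\mor_{eRe}(eR,M)$; alternatively one may simply take $\operatorname{Im}(\theta_M)$ as the definition of $j_{!*}$ throughout, which makes this step vacuous.

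Granting this, the first real step is to observe that an equivalence of recollements intertwines the canonical transformations $\theta$. Let $F\colon R\mods\to\Perv(X,\calS)$ and $F''_A\colon eRe\mods\to\Perv(U,\calS|_U)$ be the equivalences of \autoref{thm:recollement-equivalence}, together with the natural isomorphisms $\beta_!\colon Fj_!\Rightarrow j_!F''_A$ and $\beta_*\colon Fj_*\Rightarrow j_*F''_A$ that are part of the data of an equivalence of recollements. I claim that for every $M\in eRe\mods$ the square
\begin{center}
  \begin{tikzcd}
    F j_! M \arrow{r}{F\theta_M} \arrow{d}[swap]{\beta_!} & F j_* M \arrow{d}{\beta_*}\\
    j_! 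F''_A M \arrow{r}{\theta_{F''_A M}} & j_* F''_A M
  \end{tikzcd}
\end{center}
commutes. Since $\theta$ is built solely from the units and counits of the four adjunctions and the isomorphism $j^*j_*\cong\id$, all of which an equivalence of recollements preserves, this is a routine diagram chase that I would not dwell on.

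The second step uses that $F$ is an equivalence of abelian categories, hence exact, hence image-preserving: applying $F$ to $\theta_M$ and invoking the commuting square above gives
\[
  F\bigl(j_{!*}(M)\bigr)=F\bigl(\operatorname{Im}(\theta_M)\bigr)=\operatorname{Im}(F\theta_M)\cong\operatorname{Im}(\theta_{F''_A M})=j_{!*}\bigl(F''_A(M)\bigr).
\]
Finally I would identify $F''_A(M)$ with $\calL$. By construction $F''_A$ is the composite of the restriction functor $\iota^*\colon eRe\mods\to\bbC[\pi_1(U,e)]\mods$ with the equivalence $\bbC[\pi_1(U,e)]\mods\to\Perv(U,\calS|_U)$ sending a representation to its associated (shifted) local system; since $M=(\iota^*)^{-1}(L)$ we have $\iota^*(M)=L$, so $F''_A(M)$ is the shifted local system attached to $L$, that is $\calL$. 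Combining, $F(j_{!*}(M))\cong j_{!*}(\calL)=\ic(\calL)$, which is exactly the assertion that $\ic(\calL)$ corresponds to the $R$-module $j_{!*}(M)$ under $\Perv(X,\calS)\cong R\mods$. There is no serious obstacle here: the corollary is essentially formal once \autoref{thm:recollement-equivalence} is in hand, and the only points requiring an explicit (but routine) check are the identification of \autoref{def:shriek-star-functor}'s map with the intrinsic recollement morphism $\theta$ and the compatibility of $\theta$ with the equivalence of recollements.
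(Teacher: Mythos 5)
Your proposal is correct and follows exactly the route the paper intends: the paper states this corollary without proof as an immediate consequence of \autoref{thm:recollement-equivalence}, having set up \autoref{def:shriek-star-functor} precisely so that the module-side $j_{!*}$ is the image of the canonical adjunction morphism $j_!\to j_*$, which an exact equivalence of recollements transports to the perverse-sheaf side. Your write-up simply makes explicit the routine checks (identification of the explicit map with $\theta_M$, compatibility of $\theta$ with the equivalence, and $F''_A(M)\cong\calL$) that the paper leaves to the reader.
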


\subsection{Perverse sheaves supported on closed unions of strata}\label{subsec:closed-unions}
In this subsection, we extend the previous results to describe the category of perverse sheaves smooth with respect to $\calS$ that are supported on the closure of the union of some of the strata.
As an application, we give a description of all intersection cohomology sheaves on $X$ smooth with respect to $\calS$.

For the remainder of the section, let $Z$ be a closed union of some strata of $\calS$.
That is, $Z = \overline{Z}$.
Note that $Z$ is a union of vector subspaces of $X$ defined over $\bbR$.
Let $\calC_Z$ denote the restriction of the real face poset to $Z$:
\[
  \calC_Z =\{C \in \calC \mid C \subset Z_{\mathbb{R}}\}.
\]
\begin{theorem}\label{thm:closed-unions}
  Let $I_Z$ be the ideal of $R$ generated by the set $\{e_C\mid C\notin \calC_Z\}$.
  Then there is an equivalence of categories $(R/I_Z)\mods \to \Perv(Z,\calS|_Z)$.
\end{theorem}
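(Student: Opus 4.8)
The plan is to deduce the statement from \autoref{thm:equivalence} together with the standard fact that, for the closed inclusion $i_Z\colon Z\to X$, the pushforward $(i_Z)_*$ is $t$-exact and fully faithful and identifies $\Perv(Z,\calS|_Z)$ with the full subcategory $\Perv_Z(X,\calS)\subseteq\Perv(X,\calS)$ of objects whose support is contained in $Z$ (here one first checks that every nonempty intersection $Z\cap S$, $S\in\calS$, is again a stratum of $\calS$, so $\calS|_Z$ is a genuine stratification of $Z$ and $(i_Z)_*$ preserves $\calS$-constructibility). Thus it suffices to show that under the equivalence $\bfM\circ\bfQ\colon\Perv(X,\calS)\xrightarrow{\ \sim\ }R\mods$ of \autoref{thm:equivalence} the subcategory $\Perv_Z(X,\calS)$ corresponds to $(R/I_Z)\mods$, i.e.\ to the $R$-modules $V$ with $e_CV=0$ for every $C\notin\calC_Z$. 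Since both inclusions $\Perv_Z(X,\calS)\hookrightarrow\Perv(X,\calS)$ and $(R/I_Z)\mods\hookrightarrow R\mods$ are inclusions of full subcategories, it is enough to check that this equivalence carries objects of the first type to objects of the second type, and conversely.

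Next I would collect the combinatorial input. Write $|D|$ for the real linear span of a face $D$, the smallest flat containing $D$. Since $Z_\bbR$ is a union of flats and $D$ is relatively open in $|D|$, a short sign-vector argument gives $D\in\calC_Z\iff|D|\subseteq Z_\bbR$; in particular $\calC_Z$ is downward closed in $\calC$, and $C\notin\calC_Z$ forces $C\cap Z=\emptyset$. From the definition of $\circ$ one computes $\sigma(P\circ Q)_H=0\iff\sigma(P)_H=\sigma(Q)_H=0$, hence $|P\circ Q|=|P|\vee|Q|$; consequently if $P\le C$ then $|P\circ Q|\subseteq|C\circ Q|$, so $P\circ Q\notin\calC_Z$ implies $C\circ Q\notin\calC_Z$. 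Finally, a point $\sqrt{-1}\,p+q$ with $p\in P$, $q\in Q$ lies on a complexified flat $L$ exactly when $p,q\in L$, so the Kapranov--Schechtman cell $\sqrt{-1}\,P+Q$ is contained in $Z$ if and only if $|P|\vee|Q|\subseteq Z_\bbR$, equivalently $P\circ Q\in\calC_Z$. Since $X\setminus Z$ is a union of $\calS$-strata and hence of such cells, a complex smooth with respect to $\calS$ is supported on $Z$ if and only if it vanishes on every cell $\sqrt{-1}\,P+Q$ with $P\circ Q\notin\calC_Z$.

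Now the two directions. If $\scrE$ is supported on $Z$ and $C\notin\calC_Z$, then $C\cap Z=\emptyset$ and $Z$ is closed, so $\scrE$ vanishes on a neighbourhood of $C$ in $\bbC^n$; since, as recalled in the proof of \autoref{thm:recollement-equivalence}, the component $E_C$ of $\bfQ(\scrE)$ is the space of sections of $j^!\scrE$ over $C$ — a quantity local around $C$ — we get $E_C=0$. As $\bfN\bfM\bfQ(\scrE)\cong\bfQ(\scrE)$, the $C$-component of the left-hand side, namely $e_C\cdot\bfM\bfQ(\scrE)$, is isomorphic to $E_C=0$, so $I_Z$ annihilates $\bfM\bfQ(\scrE)$. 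Conversely, let $V$ be an $R$-module with $e_CV=0$ for all $C\notin\calC_Z$, and let the inverse equivalence $F\colon R\mods\to\Perv(X,\calS)$ send $V$ to the perverse complex $\scrE^\bullet(V)$ of \autoref{eq:perverse-complex}. On a cell $\sqrt{-1}\,P+Q$ with $P\circ Q\notin\calC_Z$, each summand $\scrE_C(V)$ of each term of $\scrE^\bullet(V)$ restricts, by \autoref{eq:stalks}, to $e_{C\circ Q}V$ when $P\le C$ and to $0$ otherwise; by the combinatorial step every such $C\circ Q$ lies outside $\calC_Z$, so $e_{C\circ Q}V=0$. Hence $\scrE^\bullet(V)$ vanishes on all such cells, i.e.\ on $X\setminus Z$, and $F(V)\in\Perv_Z(X,\calS)$.

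Combining the two directions, the mutually inverse equivalences $\bfM\bfQ$ and $F$ restrict to mutually inverse equivalences between $\Perv_Z(X,\calS)$ and $(R/I_Z)\mods$; composing with $\Perv(Z,\calS|_Z)\cong\Perv_Z(X,\calS)$ gives the claimed equivalence. I expect the main obstacles to be, first, correctly invoking the behaviour of $(i_Z)_*$ — together with $\calS$-constructibility and the perverse $t$-structure — under the closed embedding $Z\to X$ with the restricted stratification; and second, the bookkeeping around the operation $\circ$, in particular the identity $|P\circ Q|=|P|\vee|Q|$, which is what pins down precisely which cells of \cite{kap.sch:16} sit inside $Z$. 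The purely sheaf-theoretic ingredients (the description of $E_C$ via $j^!$ and the stalk formula \autoref{eq:stalks}) are quoted from earlier in the paper.
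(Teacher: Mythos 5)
Your proposal is correct and follows essentially the same route as the paper: identify $\Perv(Z,\calS|_Z)$ with the support-on-$Z$ subcategory and $(R/I_Z)\mods$ with the $I_Z$-annihilated subcategory, then match them under the equivalence of \autoref{thm:equivalence} using the stalk formula \eqref{eq:stalks} in one direction and the $j^!$-description of $E_C$ in the other. Your treatment is somewhat more explicit than the paper's (notably the identity $|P\circ Q|=|P|\vee|Q|$ and the criterion for when a cell $iP+Q$ lies in $Z$, which the paper states more tersely), but there is no difference in substance.
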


\begin{proof}
  Since $Z$ is a closed subset of $X$, the category $\Perv(Z,\calS|_Z)$ is isomorphic to the full subcategory of $\Perv(X,\calS)$ consisting of objects supported on $Z$.
  Similarly, we may think of the category $(R/I_Z)\mods$ as the full subcategory of $R\mods$ consisting of objects annihilated by $I_Z$.
  The construction of the functor $(R/I_Z)\mods \to \Perv(Z, \calS|_Z)$ is similar to the construction of the functor $F'_A$ in the proof of \autoref{thm:recollement-equivalence}.

  Let $M \in R\mods$ be an $R$-module that is annihilated by $I_Z$.
  The functor $\bfP \circ \bfN$ from \autoref{thm:equivalence} sends $M$ to a perverse sheaf $\scrE^\bullet(M)$.
  Using the description of the stalks of the component sheaves $\scrE_C(M)$ at cells $iP + Q$ from \autoref{eq:stalks}, we conclude that $\scrE_C(M)|_{iP + Q} = 0$ unless $P \subset C$ and $C \subset Z_{\bbR}$.
  Moreover, even if $C \subset Z_{\bbR}$, the stalk $\scrE_C(M)$ at a cell $iP + Q$ equals zero unless $C\circ Q \subset Z_{\bbR}$.
  Let $Z'_{\bbR} \subset Z_{\bbR}$ be the maximal real vector space that contains $C$.
  Then the cell $C \circ Q$ is a subset of $Z$ if and only if $Q \subset Z'_{\bbR}$.
  This means that both $P$ and $Q$ are contained in a common vector space $Z'_{\bbR} \subset Z_{\bbR}$, which is true if and only if $iP + Q \subset Z$.
  We conclude that the stalks of the component sheaves of $\scrE^\bullet(M)$ are zero at all points outside $Z$.
  In other words, $\scrE^\bullet(M)$ is supported on $Z$, and may be thought of as an object of $\Perv(Z,\calS|_Z)$.
  The inverse equivalence is constructed in a similar manner to the inverse of $F'_A$ in the proof of \autoref{thm:recollement-equivalence}.

\end{proof}

\subsection{Perverse sheaves supported on the closure of a single stratum}\label{subsec:single-stratum-closure}
As a special case of the previous section, let $Z$ be the closure of a single stratum of $\calS$.
Then $\calH^Z_{\bbC}$, the restriction of $\calH_{\bbC}$ to $Z$, is a hyperplane arrangement in $Z$ defined over $\bbR$.
The stratification on $Z$ obtained via $\calH^Z_{\bbC}$ coincides with $\calS|_Z$.
Let $R_Z$ be the algebra defined as in \autoref{subsec:algebra-r} for the arrangement $\calH^Z_{\bbC}$ on $Z$.
Let $I_Z$ be the ideal of $R$ generated by the set $\{e_C \mid C \notin \calC_Z\}$, as above.

By \autoref{thm:closed-unions}, there is an equivalence of categories $(R/I_Z)\mods \to \Perv(Z, \calS|_Z)$.
On the other hand by \autoref{thm:equivalence}, we also have an equivalence $R_Z\mods \to \Perv(Z,\calS|_Z)$.
Our next proposition relates these two.
It is also natural to ask how the algebras $R_Z$ and $R/I_Z$ are themselves related.
We discuss this question in \autoref{sec:further-questions}.
\begin{proposition}\
  \begin{compactenum}[(1)]
  \item There is a surjective ring homomorphism $\rho_Z\colon R/I_Z \to R_Z$, inducing a pullback functor $\rho_Z^*\colon R_Z\mods \to (R/I_Z)\mods$.
  \item The composition 
    \[
      R_Z\mods \xrightarrow{\rho_Z^*} (R/I_Z)\mods \to \Perv(Z,\calS|_Z)
    \]
    coincides with the equivalence of categories constructed in \autoref{thm:equivalence}.
    Consequently, $\rho_Z^*$ is an equivalence of categories.
  \end{compactenum}
\end{proposition}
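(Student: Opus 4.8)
The plan is to construct the ring map $\rho_Z$ explicitly on generators, and then to identify the composite functor of part (2) with the equivalence of \autoref{thm:equivalence} for $\calH^Z_{\bbC}$ by comparing the two Kapranov--Schechtman complexes term by term. The geometric input that makes everything work is that $Z_{\bbR}$ is a union of faces of $\calH$: since $Z$ is the closure of a single stratum, $Z_{\bbR}=\bigcap\{H_{\bbR}\mid H\in\calH,\ Z\subseteq H\}$, so each face of $\calH$ is either contained in $Z_{\bbR}$ or disjoint from it; hence $\calC_Z=\{C\in\calC\mid C\subseteq Z_{\bbR}\}$ is exactly the face poset of $\calH^Z_{\bbC}$. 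Moreover, because $Z_{\bbR}$ is a linear subspace (in particular convex), among faces lying in $\calC_Z$ the partial order, the collinear triples, the opposing pairs, and the Tits compositions $C\circ Q$ are computed identically for $\calH$ and for $\calH^Z$; only the real codimension changes, uniformly by $\codim_{\bbR^n}Z_{\bbR}$.

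For part (1): define $\rho_Z$ by sending each generator $e_C$ of $R$ to the like-named generator of $R_Z$ when $C\in\calC_Z$, and to $0$ when $C\notin\calC_Z$. I would then check that the relations (R1)--(R3) are preserved --- when every face appearing lies in $\calC_Z$ this is the corresponding relation of $R_Z$ (note that a collinear triple $A,B,C$ with $A,C\in\calC_Z$ has $B\in\calC_Z$, since $Z_{\bbR}$ is convex), and otherwise both sides vanish --- and that each localised generator $e_Ae_Be_A+(1-e_A)$ maps to an invertible element: if $A\in\calC_Z$ then its partner $B$ also lies in $\calC_Z$ (they have equal real span) and the image is the corresponding localised generator of $R_Z$, while if $A\notin\calC_Z$ the image is $1$. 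So $\rho_Z$ extends to $R\to R_Z$, annihilates $I_Z$ by construction, and descends to $R/I_Z\to R_Z$. Surjectivity is clear, since $R_Z$ is generated by its idempotents $e_C$ ($C\in\calC_Z$) together with the inverses $s_{AB}$ of its localised generators, and each pair $A,B$ opposing in $\calH^Z$ also opposes in $\calH$, so those $s_{AB}$ are images of the corresponding elements of $R$. This gives $\rho_Z$ and its pullback $\rho_Z^*$.

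For part (2): let $\Phi\colon R_Z\mods\to\Perv(Z,\calS|_Z)$ be the equivalence of \autoref{thm:equivalence} for $\calH^Z_{\bbC}$, sending an $R_Z$-module $M$ to its Kapranov--Schechtman complex on $Z$, and let $\Psi\colon (R/I_Z)\mods\to\Perv(Z,\calS|_Z)$ be the equivalence of \autoref{thm:closed-unions}, sending $N$ to $i^*$ of the complex $\scrE^\bullet(N)$ on $X$, where $i\colon Z\hookrightarrow X$. It is enough to prove $\Psi\circ\rho_Z^*\cong\Phi$, since then $\rho_Z^*\cong\Psi^{-1}\circ\Phi$ is a composite of equivalences. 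Fix $M\in R_Z\mods$ and run the stalk computation of \autoref{eq:stalks} for the module $\rho_Z^*M$, exactly as in the proof of \autoref{thm:closed-unions}: one has $\scrE_C(\rho_Z^*M)|_{iP+Q}=(\rho_Z^*M)_{C\circ Q}$ when $P\leq C$ and $0$ otherwise, where $(\rho_Z^*M)_D=e_DM$ for $D\in\calC_Z$ and $(\rho_Z^*M)_D=0$ for $D\notin\calC_Z$. Because $C\circ Q\geq C$ and $Q\subseteq(C\circ Q)+\operatorname{Span}_{\bbR}(C)$, for $P\leq C$ one has $C\circ Q\in\calC_Z$ if and only if $C,P,Q\in\calC_Z$, equivalently if and only if the cell $iP+Q$ lies in $Z$; and in that case $C\circ Q$, the orientation local system $\orient(C)$, and the stalk $e_{C\circ Q}M$ all coincide with the corresponding data for $\calH^Z$ at the matching cell of $Z$. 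Hence $\scrE^\bullet(\rho_Z^*M)$ is, term by term and with matching differentials, the pushforward $i_*\Phi(M)$; applying $i^*$ identifies $\Psi(\rho_Z^*M)$ with $\Phi(M)$, naturally in $M$, which proves (2).

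The step I expect to need the most care is this last comparison: one must check that restricting \emph{all} the Kapranov--Schechtman data on $X$ to the faces of $\calC_Z$ --- the cells $iP+Q$, the component sheaves $\scrE_C$, the orientation twists, and every differential (those into a face outside $\calC_Z$ vanish because the target component sheaf does) --- reproduces precisely the construction for $\calH^Z$ on $Z$, and that the cohomological normalisations of the two complexes, each fixed by the dimension of its ambient space as in \cite{kap.sch:16}, match up so that the closed embedding $i$ contributes no shift. The arrangement-theoretic observations of the first paragraph --- above all that $Z_{\bbR}$ is a union of faces and that oppositions and compositions remain internal to it --- are exactly what render all of these checks routine once they are in place.
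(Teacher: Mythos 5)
Your proposal matches the paper's proof essentially step for step: the same definition of $\rho_Z$ on generators (sending $e_C$ to the corresponding generator of $R_Z$ for $C\in\calC_Z$ and to $0$ otherwise), the same case analysis verifying (R1)--(R3) and the invertibility of the images of the localised elements using that $Z_{\bbR}$ is a linear subspace, and the same stalk-wise comparison of the two Kapranov--Schechtman complexes on cells $iP+Q$ via \autoref{eq:stalks} for part (2). The extra care you flag about matching orientations, differentials, and shifts is reasonable but the paper treats these as routine, exactly as you anticipate.
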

\begin{proof}
  For clarity, denote the idempotent generators of $R_Z$ by $\{f_C\mid C\in \calC_Z\}$.
  Define a homomorphism $\widetilde{\rho}_Z\colon R\to R_Z$ as follows:
  \[
    e_C \mapsto
    \begin{cases}
      f_C &\text{ if }C \subset T_{\bbR},\\
      0&\text{ otherwise.}
    \end{cases}
  \]
  We check the relations from \autoref{subsec:algebra-r}.
  \begin{compactenum}[(R1)]
  \item The relation (R1) is clear.
  \item Suppose $A$, $B$, and $C$ are three collinear faces.
    If $A\subset T_{\bbR}$ and $C \subset T_{\bbR}$, we must have $B \subset T_{\bbR}$ because $T_{\bbR}$ is a real vector space.
    In this case, $\widetilde{\rho}_Z(e_A) = f_A$, $\widetilde{\rho}_Z(e_B) = f_B$, and $\widetilde{\rho}_Z(e_C) = f_C$ and (R2) clearly holds.
    If one of the three collinear faces lies outside $T_{\bbR}$, then at least one other face must also lie outside $T_{\bbR}$; again because $T_{\bbR}$ is a vector space.
    In this case both sides of (R2) are zero.
  \item Suppose that $A \leq B$.
    If $A\subset T_{\bbR}$ and $B\subset T_{\bbR}$, then again (R3) is clear.
    If $B \not \subset T_{\bbR}$ then both sides of (R3) are zero.
    If $A \not \subset T_{\bbR}$ then it follows that $B \not \subset T_{\bbR}$, and again both sides of (R3) are zero.
  \end{compactenum}
  As for the localisation, suppose that $A$ and $B$ are two faces in $\calC$ that share a wall.
  Since $T_{\bbR}$ is a vector space, $A\subset T_{\bbR}$ if and only if $B\subset T_{\bbR}$.
  If they are both in $T_{\bbR}$, then the element $e_Ae_Be_A + (1-e_A)$ maps under $\widetilde{\rho}_Z$ to $f_Af_Bf_A + (1- f_A)$, which is invertible in $R_Z$.
  If not, then the element $e_Ae_Be_A + (1-e_A)$ maps under $\widetilde{\rho}_Z$ to $1\in R_Z$, which is obviously invertible.
  So the map $\widetilde{\rho}_Z$ as defined on the generators extends to the ring $R$.
  Since $\widetilde{\rho}_Z$ vanishes on $I_Z$ by construction, it factors through $\rho_Z\colon (R/I_Z) \to R_Z$.
  Moreover, the image of $\rho_Z$ contains all generators of $R_Z$ as well as all the adjoined inverses.
  Therefore $\rho_Z$ is surjective.

  Now consider the composition $R_Z\mods \to (R/I_Z)\mods \to \Perv(Z, \calS|_Z)$.
  Suppose that the equivalence from \autoref{thm:equivalence} sends $M \in R_Z\mods$ to the complex $\scrF^\bullet(M)$.
  Let $\scrE^\bullet(M)$ be the complex obtained from the $(R/I_Z)$-module $\rho_Z^\ast(M)$, via the equivalence constructed above.
  Both $\scrF^\bullet(M)$ and $\scrE^\bullet(M)$ are complexes of sheaves that are locally constant on cells of the form $iP + Q$ for $P, Q \in \calC$.
  We compare them stalk-wise to check that they are equal.
  From the expression in \autoref{eq:perverse-complex}, the stalks of $\scrF^\bullet(M)$ are zero by construction on any point of $X$ outside $Z$.
  From the proof above, the stalks of $\scrE^\bullet(M)$ are zero on any point of $X$ outside $Z$.
  Now suppose that $iP + Q$ is a cell where $P \subset Z_{\bbR}$ and $Q \subset Z_{\bbR}$.
  Suppose that $C \in \calC$ with $C \subset Z_{\bbR}$ and $P \leq C$.
  Let $K = C\circ Q$.
  In this case, the stalk of $\scrE_C(M)$ at $iP + Q$ equals $e_K(\rho_Z^\ast(M)) = f_KM$.
  The stalk of $\scrF_C(M)$ at $iP + Q$ also equals $f_KM$.
  Therefore $\scrF^\bullet(M) = \scrE^\bullet(M)$, and the theorem is proved.
\end{proof}

The following proposition says that the composition $R_Z \to (R/I_Z)\mods \to \Perv(Z, \calS|_Z)$ is compatible with open restriction.
We omit the proof, which is similar to arguments from previous results.
\begin{proposition}
  Fix some $A \in \calC_Z$ of maximal dimension in $Z$.
  Let $f = f_A$ be the corresponding idempotent in $R_Z$, and let $e$ be the image of the idempotent $e_A$ in the quotient $R/I_Z$.
  The ring homomorphism $\rho_Z$ restricts to a ring homomorphism $e(R/I_Z)e \to fR_Zf$, and induces an equivalence of categories $fR_Zf\mods \to e(R/I_Z)e\mods$.
  Moreover, the following diagram commutes up to natural isomorphism.
  \begin{center}
    \begin{tikzcd}[row sep=large]
      R_Z\mods\arrow{r} \arrow{d}{j^{-1}} & (R/I_Z)\mods\arrow{r}\arrow{d}{j^{-1}} & \Perv(Z, \calS|_Z)\arrow{d}{j^{-1}}\\
      fR_Zf\mods\arrow{r}\arrow[bend left=45]{u}{j_!}\arrow[bend right=45]{u}[swap]{j_*} &e(R/I_Z)e\mods\arrow{r}\arrow[bend left=45]{u}{j_!}\arrow[bend right=45]{u}[swap]{j_*} & \Perv(Y, \calS_Y)\arrow[bend left=45]{u}{j_!}\arrow[bend right=45]{u}[swap]{j_*}
    \end{tikzcd}
  \end{center}
\end{proposition}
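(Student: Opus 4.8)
The plan is to follow the template of \autoref{thm:recollement-equivalence}. That $\rho_Z$ restricts as claimed is immediate: since $A\in\calC_Z$, the formula defining $\widetilde{\rho}_Z$ in the previous proposition gives $\rho_Z(e)=f$, so $\rho_Z$ carries $e(R/I_Z)e$ into $fR_Zf$, and the resulting ring map is surjective because $\rho_Z$ is. Write $G\colon fR_Zf\mods\to e(R/I_Z)e\mods$ for the induced pullback functor; being pullback along a surjection of rings, $G$ is fully faithful, hence faithful, so it remains only to show $G$ is essentially surjective. For this I would use \autoref{lem:almost-equivalence}, exactly as $F_A''$ was treated in \autoref{thm:recollement-equivalence}.

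To set up \autoref{lem:almost-equivalence}, apply \autoref{thm:ring-recollement} to obtain two recollements: one with middle term $R_Z\mods$, open term $fR_Zf\mods$ and closed term $R_Z/(R_Zf R_Z)\mods$, and one with middle term $(R/I_Z)\mods$, open term $e(R/I_Z)e\mods$ and closed term $(R/I_Z)/((R/I_Z)e(R/I_Z))\mods$. The previous proposition supplies the middle equivalence $F=\rho_Z^{*}\colon R_Z\mods\to(R/I_Z)\mods$. The compatibility $j^{*}F\cong Gj^{*}$ is the one-line check that, for an $R_Z$-module $M$, the $e(R/I_Z)e$-module $\Res_e(\rho_Z^{*}M)$ is the space $\rho_Z(e)\cdot M=f\cdot M=\Res_f(M)$ with the action taken through $\rho_Z$. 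Moreover $\rho_Z^{*}$ restricts to an equivalence of the two closed terms: an $R_Z$-module $M$ is killed by $R_Zf R_Z$ if and only if $\rho_Z^{*}M$ is killed by $(R/I_Z)e(R/I_Z)$, because the latter acts on $\rho_Z^{*}M$ through $\rho_Z$ and $\rho_Z((R/I_Z)e(R/I_Z))=R_Zf R_Z$; thus $\rho_Z^{*}$ carries the first closed term fully faithfully onto the second, and the resulting restriction $F'$ plainly satisfies $i_*F'\cong Fi_*$. With $F$ and $F'$ equivalences and $F''=G$ satisfying conditions (2) and (3) of \autoref{lem:almost-equivalence}, we conclude $G$ is full and essentially surjective, hence an equivalence.

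For the diagram, note first that $F=\rho_Z^{*}$ and $G$ are equivalences with $j^{*}F\cong Gj^{*}$, so by \cite[Lemma 4.2]{psa.vit:14} the two recollements above are equivalent; unwinding this gives the commutativity up to natural isomorphism of the left-hand square, for each of the vertical functors $j^{-1}=\Res$, $j_!=\Ind$ and $j_*=\Coind$. For the right-hand square, apply \autoref{thm:recollement-equivalence} to the arrangement $\calH^Z$ on $Z$, whose associated algebra is $R_Z$ and whose open stratum is $Y$: this yields an equivalence of recollements between the $R_Z$-recollement and the perverse recollement of $(Z,\calS|_Z)$, the middle equivalence being the functor of \autoref{thm:equivalence} for $\calH^Z$ and the open equivalence the functor $fR_Zf\mods\to\Perv(Y,\calS_Y)$ denoted $F_A''$ there. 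Composing this with the equivalence of the previous paragraph, and using part (2) of the previous proposition to recognise the resulting middle equivalence $(R/I_Z)\mods\to\Perv(Z,\calS|_Z)$ as the one given by \autoref{thm:closed-unions}, produces the full commuting diagram; the unlabelled functor $e(R/I_Z)e\mods\to\Perv(Y,\calS_Y)$ is then $F_A''$ for $\calH^Z$ transported across the equivalence $G$.

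The one point requiring genuine attention --- everything else being a direct transcription of the proofs of \autoref{thm:recollement-equivalence} and the preceding proposition --- is the verification of the hypotheses of \autoref{lem:almost-equivalence}, specifically that $\rho_Z^{*}$ restricts to an actual \emph{equivalence} between the closed pieces rather than merely a functor; this is exactly where surjectivity of $\rho_Z$ and the identity $\rho_Z(e)=f$ enter. No other real obstacle arises.
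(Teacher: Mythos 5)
Your argument is correct and is exactly the proof the paper omits: the paper only remarks that the proof ``is similar to arguments from previous results,'' and your writeup carries out that template (restriction of $\rho_Z$ via $\rho_Z(e)=f$, full faithfulness of pullback along a surjection, essential surjectivity via \autoref{lem:almost-equivalence} applied to the two idempotent recollements, and assembly of the diagram from \autoref{thm:recollement-equivalence} for $\calH^Z$ together with part (2) of the preceding proposition). No gaps.
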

As a consequence, each local system $\calL$ on $Y$ corresponds to an $e(R/I_Z)e$-module.
The next proposition is immediate, and gives a description of intersection cohomology sheaves on $X$ coming from local systems on strata.
\begin{corollary}\label{cor:main-recollement}
  Suppose that $\calL$ is a local system on $Y$, corresponding to the $e(R/I_Z)e$-module $M$.
  Let $j_{!*}(M)$ be the intermediate extension of $M$ to an $(R/I_Z)$-module.
  Under the equivalence $R\mods \to \Perv(X, \calS)$ from \autoref{thm:recollement-equivalence}, the module $j_{!*}(M)$ (thought of as an $R$-module) maps to $\ic(\calL)$.
\end{corollary}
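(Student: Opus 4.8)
The plan is to deduce the corollary from two inputs. The first is the standard compatibility of intermediate extensions with pushforward along a closed immersion: if $a\colon Y \hookrightarrow Z$ denotes the open inclusion and $i_Z\colon Z\hookrightarrow X$ the closed inclusion, then, since $(i_Z)_* = (i_Z)_!$ is perverse-$t$-exact and fully faithful and $j_Y = i_Z\circ a$, one has $\ic(\calL) = (i_Z)_*\bigl(\ic_Z(\calL)\bigr)$, where $\ic_Z(\calL) = j_{!*}(\calL)\in\Perv(Z,\calS|_Z)$. The second is that the algebraic intermediate extension functor of \autoref{def:shriek-star-functor} is carried to the geometric intermediate extension under the equivalence of recollements supplied by the proposition immediately preceding this corollary.

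Concretely, I would first observe that restriction along the quotient map $R\to R/I_Z$ is the functor $\Inc$, and that the equivalence $(R/I_Z)\mods\to\Perv(Z,\calS|_Z)$ of \autoref{thm:closed-unions} is, by its very construction, the restriction to $(R/I_Z)\mods$ of the equivalence $R\mods\to\Perv(X,\calS)$ of \autoref{thm:equivalence}; consequently that equivalence identifies $(R/I_Z)\mods$ with the full subcategory of $\Perv(X,\calS)$ of objects supported on $Z$, intertwining $\Inc$ with $(i_Z)_*$. In view of the first paragraph, it then suffices to prove that the equivalence $(R/I_Z)\mods\to\Perv(Z,\calS|_Z)$ sends $j_{!*}(M)$ to $\ic_Z(\calL)$.

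For this I would appeal to the commutative diagram of the preceding proposition, which exhibits the standard recollement of $(R/I_Z)\mods$ attached to the idempotent $e$ (\autoref{thm:ring-recollement}) as equivalent, via the displayed vertical equivalences commuting with $j^*$, $j_!$ and $j_*$, to the open/closed recollement of $\Perv(Z,\calS|_Z)$ along $Y$. In any recollement the intermediate extension functor is the image of the canonical morphism $j_!\to j_*$ — the one corresponding to $\id$ under $\mor(j_!N, j_*N)\cong\mor(N,N)$ — and this morphism is manufactured solely from $j_!$, $j_*$ and the recollement adjunctions. Since adjoints are unique up to canonical isomorphism, the vertical equivalences respect this morphism, hence its image, i.e.\ they commute with $j_{!*}$. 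As $\calL$, viewed as a perverse sheaf on $Y$, satisfies $j_{!*}(\calL) = \ic_Z(\calL)$ by definition, chasing $M$ through the diagram yields $j_{!*}(M)\mapsto \ic_Z(\calL)$; applying $(i_Z)_*$ (equivalently $\Inc$) together with the first paragraph gives that $j_{!*}(M)$, regarded as an $R$-module, maps to $\ic(\calL)$.

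The one place where a little care is genuinely needed — and the main (mild) obstacle — is the assertion that an equivalence of recollements respects $j_{!*}$, that is, that the canonical morphism $j_!\to j_*$ is transported correctly; I would justify this by the uniqueness of adjoints together with the characterisation, following \cite{psa.vit:14}, of $j_{!*}$ as the image of $j_!\to j_*$. Everything else is bookkeeping already carried out in the proofs of \autoref{thm:closed-unions}, \autoref{thm:recollement-equivalence} and the preceding proposition, which is why the corollary is, as stated, essentially immediate.
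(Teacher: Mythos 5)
Your argument is correct and follows exactly the route the paper intends: the paper declares the corollary ``immediate'' from the preceding proposition, and your write-up supplies the details of that deduction --- the factorisation $\ic(\calL) = (i_Z)_*\ic_Z(\calL)$, the identification of $\Inc$ with $(i_Z)_*$ via \autoref{thm:closed-unions}, and the chase through the commuting diagram. You also correctly isolate and justify the one point the paper leaves implicit, namely that an equivalence of recollements transports the canonical morphism $j_!\to j_*$ (hence its image $j_{!*}$), since that morphism is determined by the adjunctions and the unit isomorphism $\id\cong j^*j_!$.
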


\section{Application to $W$-equivariant perverse sheaves}\label{sec:coxeter-arrangement}
An important example of the kinds of hyperplane arrangements studied above is given by the reflection arrangement of a finite Coxeter group $W$.
In this case we can consider the same setup as in \autoref{subsec:background}.
Additionally, the group $W$ acts on this setup.
We consider the category of $W$-equivariant perverse sheaves on $\calS$, denoted $\Perv_W(X,\calS)$.

Weissman~\cite{wei:17} defines an algebra analogous to the algebra $R$ defined in \autoref{subsec:algebra-r}.
Further, Weissman proves that $\Perv_W(X,\calS)$ is equivalent to the category of finite-dimensional modules over this algebra.
This theorem is the $W$-equivariant analogue of our \autoref{thm:equivalence}.
The aim of this section is to apply the methods from the remainder of the paper to the case of $W$-equivariant perverse sheaves on Coxeter arrangements.

\subsection{Definition of the algebra $R_W$}\label{subsec:equivariant-algebra}
This section recalls some definitions from~\cite{wei:17}.
We refer to~\cite{wei:17} as well as~\cite{bou:02} for details.
In the remainder of this paper we fix $\calH$ to be the real reflection arrangement of a finite Coxeter group $W$.
Otherwise the setup is identical to that in \autoref{subsec:background}.

Further, we fix a chamber $A \in \calC^0$.
Let $S$ be the set of reflections in the walls of $A$.
Then $(W,S)$ forms a finite Coxeter system.
This means that $W$ is generated by the set $S$, modulo the following relations.
\begin{compactenum}[(1)]
\item $s^2 = 1$ for each $s \in S$.
\item $(st)^{m_{st}} = 1$ for each pair $(s, t) \in S$, where $m_{s,t}$ denotes the order of $(st)$ in $W$.
\end{compactenum}
Let $\calC^+ = \{C \in \calC\mid C \leq A\}$.
Note that $\overline{A} = \bigcup_{C\in \calC^+}C$ is a fundamental domain for the action of $W$ on $\bbC^n$.
The set of subsets of $S$, denoted $\Lambda$, is partially ordered by reverse inclusion.
As explained in~\cite[Proposition 2.1.1]{wei:17}, the posets $\calC^+$ and $\Lambda$ are isomorphic.
The isomorphism is given by sending any $I \in \Lambda$ to
\[
  C_I = \{x \in \overline{A}\mid s(x) = x \text{ for all }s \in I, \text{ and }s(x)\neq x \text{ for all }s \notin I\}.
\]
For example, $C_{\emptyset} = A$ and $C_{S} = \{0\}$.
% We sometimes think of the generators of $R_W$ as being indexed by the set $\calC^+$.
% In this case we write $e_{C_I} = e_I$.

Suppose that $I, J \in \Lambda$ and $w \in W$.
Recall from~\cite[Lemma 4.1.1]{wei:17} that $I$ is said to oppose $J$ through $w$ (written $I\mid_w J$) if the following hold.
\begin{compactitem}[$\bullet$]
\item There is some $K \in \Lambda$ such that $\#I = \#J = \#K - 1$, and $I \cup J \subset K$.
\item $w \in W_K$, and $wJw^{-1} = I$.
\item There are opposite faces $C_1\mid_{C_0} C_2$ such that $C_1 = C_I$, $C_0 = C_K$, and $C_2 = w(C_J)$.
\end{compactitem}

We recall the definition of the algebra from~\cite[Theorem 4.3.1]{wei:17}, which we denote as $R_W$.
It is denoted by $\calA_W$ in the paper above.
\begin{definition}
  Let $R^0_W$ be the algebra generated freely over $\bbC$ by the sets $\{e_I\colon I \in \Lambda\}$ and $\{s \colon s \in S\}$, subject to the following relations.
  \begin{compactenum}[(1)]
  \item For any two $I, J \in \Lambda$, we have $e_Ie_J = e_{I \cap J} = e_Je_I$.
  \item If $s \in I$, then $se_I = e_I s$.
  \item For each $s \in S$, we have $s^2 = 1$.
  \item For any two $s,t\in S$ with $m_{s,t} < \infty$, we have $(st)^{m_{s,t}} = 1$.
  \item Suppose $I$ and $J$ are subsets of $S$ such that $S = I\sqcup J$.
    Let $A \subset I$ and $B \subset J$.
    Let $w_A$ and $w_B$ be the longest elements of the sub-Coxeter systems $(W_A,A)$ and $(W_B,B)$.
    If $w, w_1, w_2 \in W_I$ such that $w = w_2w_1$, and if
    \[
      l(w w_B w_A) = l(w w_B w^{-1}) + l(w_2) + l(w_1) + l(w_A),
    \]
    then
    \[
      e_{A\cup J} \cdot w_1\cdot e_J\cdot w_2\cdot e_{B\cup J} = e_{A\cup J} \cdot w \cdot e_{B\cup J}.
    \]
  \end{compactenum}
  
  Then the algebra $R_W$ is defined to be the localisation of $R^0_W$ at the multiplicative subset generated by the elements of the form
  \[
    \{e_Iw^{-1}e_Jwe_I + (1-e_I)\mid I\mid_wJ\}.
  \]
\end{definition}

\subsection{Equivalence of recollements}
We prove the following equivariant analogue of \autoref{thm:recollement-equivalence}.
\begin{theorem}\label{thm:equivariant-equivalence}
  Let $\calH$ be the real reflection arrangement of a finite Coxeter group $W$, and let $\calH_{\bbC}$ be its complexification.
  Let $\calS$ be the stratification of $X = \bbC^n$ by the complex faces of $\calH_{\bbC}$.
  Let $U \in \calS$ be the open stratum, and let $V$ be its complement in $X$.
  Let $A \in \calC$ be a fixed chamber, so that the closure of $A$ is a fundamental domain for the $W$-action on $X$.
  Let $e \in R_W$ be the idempotent corresponding to $A$ (equivalently, to $\emptyset \in \Lambda$).
  Then we have an equivalence of recollements as follows.
  \begin{center}
    \begin{tikzcd}[ampersand replacement=\&]
      R_W/R_WeR_W\mods\arrow{d}[swap]{F_A'} \arrow{r} \& R_W\mods\arrow{d}[swap]{F} \arrow[shift right=1.5]{l} \arrow[shift left=1.5]{l}  \arrow{r} \& eR_We\mods\arrow{d}{F_A''} \arrow[shift right=1.5]{l} \arrow[shift left=1.5]{l}\\
      \Perv_W(V,\calS|_V) \arrow{r} \& \Perv_W(X,\calS) \arrow[shift right=1.5]{l} \arrow[shift left=1.5]{l}  \arrow{r} \& \Perv_W(U,\calS|_U) \arrow[shift right=1.5]{l} \arrow[shift left=1.5]{l}     
    \end{tikzcd}
  \end{center}
\end{theorem}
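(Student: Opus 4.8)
The plan is to run the proof of \autoref{thm:recollement-equivalence} essentially verbatim, with Weissman's equivalence \cite[Theorem 4.3.1]{wei:17} in place of \autoref{thm:equivalence} and the equivariant Salvetti presentation in place of the ordinary one. Set $F\colon R_W\mods \to \Perv_W(X,\calS)$ to be the (inverse) equivalence of \cite{wei:17}, realised by the equivariant version of the Kapranov--Schechtman complex $\scrE^\bullet(M)$ attached to an $R_W$-module $M$. As in the non-equivariant case, the term-wise stalks of this complex at a cell $iP+Q$ are governed by a formula of the shape \eqref{eq:stalks}: the faces $P,Q,C$ now range over all of $\calC$ rather than just the fundamental domain, and the relevant value $M_{C\circ Q}$ is recovered from the idempotents $e_I$ ($I\in\Lambda$) by carrying it into the fundamental domain via the $W$-action. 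The general ring recollement \autoref{thm:ring-recollement} applied to the idempotent $e=e_\emptyset\in R_W$ supplies the top row, and the standard perverse recollement supplies the bottom row; it remains to construct $F_A'$ and $F_A''$ and to check the three hypotheses of \autoref{lem:almost-equivalence}.

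First I would construct $F_A'\colon (R_W/R_W e R_W)\mods \to \Perv_W(V,\calS|_V)$. Here the equivariance actually simplifies matters: since $W$ acts transitively on chambers and $w e_\emptyset w^{-1}$ lies in $R_W e_\emptyset R_W$ for every $w\in W$, an $R_W$-module killed by $e$ is automatically killed by the idempotent attached to every chamber, so no analogue of \autoref{lem:equal-ideals} is needed. The stalk formula then forces $\scrE^\bullet(M)$ to be term-wise annihilated by $j^*$, hence supported on $V$; running the same computation in reverse on a $W$-equivariant perverse sheaf supported on $V$ produces the inverse functor, so $F_A'$ is an equivalence and $i_* F_A' = F i_*$ by construction.

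The substantive new input is $F_A''\colon eR_We\mods \to \Perv_W(U,\calS|_U)$, together with the naturality $j^*F\simeq F_A''j^*$. Because $W$ acts freely on $U$, we have $\Perv_W(U,\calS|_U)\simeq \Perv(U/W)$, the category of (shifted) local systems on $U/W$, equivalently finite-dimensional representations of $\pi_1(U/W,e)$, which is the generalised braid group (Artin group) $B_W$ of $W$. The crux -- and what I expect to be the main obstacle -- is the equivariant analogue of \autoref{prop:pi1-homomorphism}: a ring homomorphism $\iota_W\colon \bbC[\pi_1(U/W,e)]\to eR_We$. I would use the equivariant Salvetti-type presentation of $\pi_1(U/W)$ from \cite{sal:87,wei:17}, which adjoins to the ordinary wall-crossing generators $\psi_{AB}$ the reflection generators coming from $S$; a positive $\psi_{AB}$ goes to the appropriate product of idempotents $e$-conjugated into $eR_We$, a negative generator to the corresponding element twisted by the localisation inverses via \eqref{eq:lr-inverse}, and a reflection $s\in S$ to $s\in R_W$. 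The verification that this respects the relations is where Weissman's length-additivity relation (5) does the real work: it is designed precisely so that telescoping products of idempotents around a codimension-two face reproduce the Zifferblatt/braid relations, so the check, while bookkeeping-heavy, should go through as in the proof of \autoref{prop:pi1-homomorphism}. Define $F_A''$ as $\iota_W^*$ followed by the equivalence with local systems; it is faithful because $\iota_W^*$ induces the identity on underlying vector spaces.

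Finally I would establish $j^*F\simeq F_A''j^*$ by the stalk-by-stalk monodromy argument of \autoref{lem:f-double-prime}, carried out on $U/W$: the stalk of $j^*\scrE^\bullet(M)$ at the image of the cell $iA+Q$ is $eM$, and the transition map across a wall, computed from the two-term complex of restriction maps, is exactly the elementary element of $eR_We$ that $\iota_W$ assigns to the corresponding Salvetti generator, so the two $\pi_1(U/W,e)$-representations coincide. With $F$ and $F_A'$ equivalences, $i_* F_A' = F i_*$, and $j^*F\simeq F_A''j^*$, \autoref{lem:almost-equivalence} shows $F_A''$ is full and essentially surjective; being also faithful it is an equivalence, and the three functors assemble into the asserted equivalence of recollements. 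Apart from the relation-checking for $\iota_W$ in the previous paragraph, every step is a routine transcription of the non-equivariant arguments into the equivariant setting, with the $W$-action carried along formally.
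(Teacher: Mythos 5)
Your proposal is correct and follows essentially the same route as the paper: transport the proof of \autoref{thm:recollement-equivalence} to the equivariant setting, using Weissman's equivalence for $F$, the support argument for $F_A'$, the monodromy comparison for $j^*F\simeq F_A''j^*$, and \autoref{lem:almost-equivalence} to conclude. The one place you anticipate real work --- constructing $\iota_W\colon\bbC[\Gamma_W]\to eR_We$ and matching the wall-crossing monodromy with its image --- is exactly where the paper instead cites Weissman's Propositions 4.4.2 and 4.4.1, which already establish both facts, so no new relation-checking is needed.
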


\begin{proof}
  First note that $\Perv_W(U,\calS|_U)$ is equivalent to the category of $W$-equivariant local systems on $U$.
  Since the action of $W$ on $U$ is free, this category is equivalent to the category of local systems on $U/W$.
  Recall that the fundamental group of $U/W$ is the Artin braid group $\Gamma_W$ corresponding to $W$.
  We already know from~\cite[Proposition 4.4.2]{wei:17} that there is a ring homomorphism $\iota_W\colon \bbC[\Gamma_W] \to eR_W e$, given by sending $s \in S$ to $ese \in eR_W e$.
  There is a corresponding pullback functor $\iota_W^*\colon eR_We\mods \to \bbC[\Gamma_W]\mods$.

  We now set $F$ to be the equivalence $R_W\mods\to \Perv_W(X,\calS)$ as in~\cite[Theorem 4.3.1]{wei:17}.
  We define $F''_A$ analogously to the definition in \autoref{lem:f-double-prime}.
  The key step of \autoref{lem:f-double-prime} is to show that if $M \in R\mods$, the action of any $\ell \in \pi_1(U,e)$ on $j^*F(M)$ coincides with the action of $\ell$ on the $eRe$-module $eM$ via $\iota^*$.
  We used the fact that any $\ell \in \pi_1(U,e)$ can be written as a product of ``half-monodromies'' around hyperplanes in $\calH$, starting and ending at the basepoint.
  In the equivariant case, the half-monodromies starting at the basepoint are now themselves elements of the fundamental group.
  An analogous calculation goes through in this case, and this is precisely the content of~\cite[Proposition 4.4.1]{wei:17}.
  Finally we define $F'_A\colon R_W/R_WeR_W\mods \to  \Perv_W(V,\calS|_V)$ exactly as in \autoref{thm:recollement-equivalence}.
  The remainder of the proof is analogous to the proof of \autoref{thm:recollement-equivalence}.
\end{proof}

Recall (see, e.g.,~\cite[Section 4.4]{wei:17}) that a $W$-equivariant local system on the open stratum $U$ is just a representation of the braid group $\Gamma_W = \pi_1(U/W)$.
We now have the following analogue of \autoref{cor:open-local-systems}.
\begin{corollary}\label{cor:equiv-open-ls}
  Let $U$ be the open stratum of $X$, and let $\calL$ be an equivariant local system on $U$ corresponding to a representation $L$ of $\Gamma_W$.
  Let $e \in R_W$ be the idempotent corresponding to $\emptyset \in \Lambda$.
  Let $M\in eR_We\mods$ be the object corresponding to $L$ under the equivalence $\Perv(U, \calS|_U) \cong eR_We\mods$.
  Then the intersection cohomology sheaf $IC(\calL)$ corresponds to the $R_W$-module $j_{!*}(M)$, where $j_{!*}$ is the intermediate extension functor on $R_W$-modules as defined in \autoref{def:shriek-star-functor}.
\end{corollary}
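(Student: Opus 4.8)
The plan is to deduce this corollary from the equivalence of recollements established in \autoref{thm:equivariant-equivalence}, following verbatim the argument used to prove \autoref{cor:open-local-systems} from \autoref{thm:recollement-equivalence}. The conceptual point is that the intermediate extension functor $j_{!*}$ is intrinsic to the recollement data: for an object $N$ of the quotient category, $j_{!*}(N)$ is the image of the canonical morphism $j_!(N) \to j_*(N)$, and this morphism is the one adjoint, under the adjunction $(j^*, j_*)$, to the inverse of the unit isomorphism $\id \xrightarrow{\ \sim\ } j^* j_!$ supplied by axiom (2) of a recollement. Hence any equivalence of recollements — a triple of equivalences commuting with all six functors and with the adjunction (co)units up to coherent natural isomorphism — carries $j_{!*}(N)$ to $j_{!*}$ of the image of $N$. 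Applying this to the equivalence of \autoref{thm:equivariant-equivalence} immediately identifies the module $j_{!*}(M)$ with a perverse sheaf of the form $j_{!*}(\text{something on }U)$.

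Concretely I would proceed as follows. First, recall that under the equivalence $\Perv_W(U,\calS|_U)\cong eR_We\mods$ appearing on the right column of \autoref{thm:equivariant-equivalence} (equivalently, under the equivalence induced by $\iota_W^*$ via \cite[Proposition 4.4.2]{wei:17}), the equivariant local system $\calL$, i.e.\ the $\Gamma_W$-representation $L$, corresponds by hypothesis to the $eR_We$-module $M$; in particular $F''_A(M)\cong\calL$. Second, recall that $\ic(\calL)$ is by definition the intermediate extension $j_{!*}(\calL)$ of $\calL$ from $U$ to $X$, and that the module-side $j_{!*}$ of \autoref{def:shriek-star-functor} is the image of the explicit $R_W$-module map $j_!(M)\to j_*(M)$ written there. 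Third, check that this explicit map, namely $ae\otimes m\mapsto\bigl(eb\mapsto(ebae)m\bigr)$, is precisely the canonical morphism of the ring recollement of \autoref{thm:ring-recollement}, i.e.\ that it is adjoint to the inverse of the unit $\id\xrightarrow{\ \sim\ }j^*j_! = \Res_e\,\Ind_e$; this is a routine unwinding of the adjunctions $(\Ind_e,\Res_e)$ and $(\Res_e,\Coind_e)$, and it identifies the hands-on $j_{!*}$ with the recollement-intrinsic one. Finally, transport along the equivalence of \autoref{thm:equivariant-equivalence}: $F\bigl(j_{!*}(M)\bigr)\cong j_{!*}\bigl(F''_A(M)\bigr)\cong j_{!*}(\calL)=\ic(\calL)$, which is the assertion.

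The main obstacle I anticipate is the bookkeeping in the third step: confirming that the concrete formula for $j_!(M)\to j_*(M)$ is genuinely the canonical natural transformation of the recollement, so that passing to its image is functorial and agrees with the abstract intermediate extension. This is slightly fiddly but purely formal, and — crucially — it is already required for \autoref{cor:open-local-systems}; since the ring recollement of \autoref{thm:ring-recollement} applies verbatim to the idempotent $e\in R_W$, nothing new is needed in the equivariant setting beyond that non-equivariant check. A secondary point to be careful about is that the object "corresponding to $L$" under $\Perv_W(U,\calS|_U)\cong eR_We\mods$ is exactly the $M$ produced by $F''_A$ (equivalently by the quasi-inverse of $\iota_W^*$), so that the local system whose intermediate extension we compute is $\calL$ itself and not a twist; this is immediate from the natural isomorphism $j^*F\cong F''_A j^*$ built into \autoref{thm:equivariant-equivalence}.
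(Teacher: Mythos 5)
Your argument is exactly the one the paper intends: the corollary is stated as an immediate consequence of the equivalence of recollements in \autoref{thm:equivariant-equivalence}, transporting the canonical morphism $j_!\to j_*$ (hence its image, the intermediate extension) across the equivalence, just as \autoref{cor:open-local-systems} follows from \autoref{thm:recollement-equivalence}. Your third step — checking that the explicit formula in \autoref{def:shriek-star-functor} is the canonical adjunction-induced morphism — is the one point the paper leaves implicit, and your treatment of it is correct.
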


Let $Z$ be any $W$-stable and closed union of strata of $\calS$.
Let $\calC^+_Z$ be the subposet of $\calC^+$ defined as $\{C \in \calC^+\mid C \subset Z_{\mathbb{R}}\}$.
Let $\Lambda_Z$ be the image of $\calC^+_Z$ under the identification of posets $\calC^+ \cong \Lambda$.
As in \autoref{subsec:closed-unions}, we obtain a description in terms of $R_W$-modules for the $W$-equivariant perverse sheaves supported on $Z$.
\begin{proposition}
  Let $I_Z$ be the ideal of $R_W$ generated by $\{e_I \mid I \notin \Lambda_Z\}$.
  Then there is an equivalence of categories as follows:
  \[
    (R_W/I_Z)\mods \to \Perv_{W}(Z, \calS|_Z).
  \]
\end{proposition}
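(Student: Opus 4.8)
The plan is to run the argument of \autoref{thm:closed-unions} inside the equivariant framework of~\cite{wei:17}, using Weissman's equivalence $F\colon R_W\mods \to \Perv_W(X,\calS)$ of~\cite[Theorem 4.3.1]{wei:17} in place of the non-equivariant equivalence of \autoref{thm:equivalence}. The first step is to record two reductions: since $Z$ is closed and $W$-stable, $\Perv_W(Z,\calS|_Z)$ is isomorphic to the full subcategory of $\Perv_W(X,\calS)$ of objects supported on $Z$, and $(R_W/I_Z)\mods$ is the full subcategory of $R_W\mods$ of modules annihilated by $I_Z$. It therefore suffices to show that $F$ and its inverse carry one of these subcategories onto the other. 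I would also note for later use that $\calC^+_Z$ is downward closed in $\calC^+$ (if $C\subset Z_{\bbR}$ and $C'\leq C$ then $C'\subset\overline{C}\subset Z_{\bbR}$), so $\Lambda_Z$ is downward closed in $\Lambda$, and that $W$-stability of $Z$ gives $w(C_I)\subset Z_{\bbR}$ if and only if $C_I\subset Z_{\bbR}$; this is what makes the single family $\{e_I\mid I\notin\Lambda_Z\}$, indexed only by fundamental-domain faces, generate the two-sided ideal one actually wants.

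For the forward inclusion, take $M\in R_W\mods$ with $I_ZM=0$ and apply $F$ to obtain an equivariant perverse sheaf $\scrE^\bullet(M)$ of the shape \autoref{eq:perverse-complex}, whose component sheaves are locally constant on the cells $iP+Q$ refining $\calS$. The key point is the equivariant analogue of the stalk formula \autoref{eq:stalks}: over a cell $iP+Q$ one first translates by a suitable $w\in W$ into the fundamental domain $\overline{A}$ and then reads off the module piece $e_{C\circ Q}M$, exactly as in the Kapranov--Schechtman recipe underlying \autoref{lem:f-double-prime}; this is the content of the stalk computations of~\cite[Section 4.3]{wei:17}. Granting this, the combinatorial argument of \autoref{thm:closed-unions} applies verbatim: because $\calC^+_Z$ is downward closed and $Z_{\bbR}$ is a union of real subspaces, such a stalk vanishes unless both $P$ and $Q$ lie (after translation) in a common real subspace contained in $Z_{\bbR}$, equivalently unless $iP+Q\subset Z$. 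Hence $\scrE^\bullet(M)$ is supported on $Z$.

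For the converse, given an equivariant perverse sheaf $\scrE$ supported on $Z$, apply $F^{-1}$; the associated descent data assigns to each fundamental-domain face $C_I$ the space $E_{C_I}$ of global sections of $j^!\scrE$ over $C_I$ (with $j\colon\bbR^n\to\bbC^n$), which vanishes whenever $C_I\not\subset Z_{\bbR}$ since $\scrE$ is supported on $Z$. Translating back, the $R_W$-module $M=F^{-1}(\scrE)$ satisfies $e_IM\cong E_{C_I}=0$ for all $I\notin\Lambda_Z$, so $I_ZM=0$. Combined with the previous paragraph, $F$ and $F^{-1}$ restrict to mutually inverse equivalences between $(R_W/I_Z)\mods$ and $\Perv_W(Z,\calS|_Z)$, which is the claim.

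The one non-formal ingredient, and the \emph{main obstacle}, is justifying the equivariant stalk formula invoked in the second paragraph: one must check that Weissman's construction of an equivariant perverse sheaf from an $R_W$-module, restricted to the fundamental domain, reduces to the Kapranov--Schechtman construction with stalks \autoref{eq:stalks}, while the $W$-translates identifying boundary cells are tracked correctly through the descent data. This parallels the analysis in \autoref{lem:f-double-prime} (and in the proof of \autoref{thm:equivariant-equivalence}), but the boundary bookkeeping is the delicate part; once it is in place the remainder of the proof is identical to that of \autoref{thm:closed-unions}.
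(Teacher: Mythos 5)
Your proposal is correct and follows essentially the same route as the paper: reduce to the identifications of $\Perv_W(Z,\calS|_Z)$ and $(R_W/I_Z)\mods$ as full subcategories (supported on $Z$, resp.\ annihilated by $I_Z$), then transport the stalk-vanishing argument of \autoref{thm:closed-unions} through Weissman's equivalence. The paper's own proof is terser --- it simply declares the remainder ``analogous'' to \autoref{thm:closed-unions} via the results of~\cite{wei:17} --- whereas you usefully flag the equivariant stalk formula and the downward-closedness of $\Lambda_Z$ as the points that make the analogy go through.
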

\begin{proof}
  Since $Z$ is closed and $W$-stable, the category $\Perv_W(Z, \calS|_Z)$ is isomorphic to the full subcategory of $\Perv(X,\calS)$ consisting of objects supported on $Z$.
  Similarly, the category $(R_W/I_Z)\mods$ is isomorphic to the full subcategory of $R_W\mods$ consisting of objects annihilated by $I_Z$.
  The remainder of the proof is analogous to the proof of \autoref{thm:closed-unions}, using the results of Sections 2.2, 3.3, and Theorem 4.3.1 of~\cite{wei:17} to translate between $W$-equivariant perverse sheaves and $R_W$-modules.
\end{proof}

\section{Further observations and questions}\label{sec:further-questions}

\subsection{Algebras with equivalent finite-dimensional module categories}
Recall that two rings are said to be Morita equivalent if their module categories are equivalent.
In the previous sections, we have discussed several pairs of algebras whose finite-dimensional module categories are equivalent via pullback maps induced from homomorphisms between them.
It is natural to ask whether in these cases this structure induces an isomorphism, or at least a Morita equivalence between these algebras.

If $A$ is an algebra over a field $k$, let $A\Mods$ be the category of all left $A$-modules, including those that are infinite-dimensional over $k$.
Recall that we use $A\mods$ to denote the category of $k$-finite-dimensional left $A$-modules.

\subsubsection{The algebras associated to perverse sheaves on the open stratum}
First consider the $W$-equivariant case for the reflection arrangement of a finite Coxeter group $W$.
Recall from~\cite[Proposition 4.4.2]{wei:17} that there is an algebra homomorphism $\iota \colon \bbC[\Gamma_W]\to eR_We$, where $\Gamma_W$ and $eR_We$ are as defined in \autoref{sec:coxeter-arrangement}.
We see as a corollary of \autoref{thm:equivariant-equivalence} that the pullback functor $\iota^*\colon eR_We\mods \to \bbC[\Gamma_W]\mods$ on the finite-dimensional module categories is an equivalence.
It is not clear from the definition of $\iota$ whether it has any other nice properties.
We prove the following proposition.
\begin{proposition}\label{prop:injective-hom}
  The ring homomorphism $\iota \colon \bbC[\Gamma_W]\to eR_We$ is injective.
\end{proposition}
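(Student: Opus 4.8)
The plan is to upgrade the fact that $\iota^{*}\colon eR_We\mods \to \bbC[\Gamma_W]\mods$ is an equivalence of categories --- a consequence of \autoref{thm:equivariant-equivalence} --- to injectivity of $\iota$ itself, by exploiting that the finite-dimensional representations of $\bbC[\Gamma_W]$ are jointly faithful: no nonzero element of $\bbC[\Gamma_W]$ annihilates every finite-dimensional module. Granting this, suppose $x \in \bbC[\Gamma_W]$ with $\iota(x) = 0$, and let $L$ be an arbitrary finite-dimensional $\bbC[\Gamma_W]$-module. Since $\iota^{*}$ is in particular essentially surjective, $L \cong \iota^{*}(M)$ for some finite-dimensional $eR_We$-module $M$; but $x$ acts on $\iota^{*}(M)$ through $\iota(x) = 0$, so $x$ annihilates $L$. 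As $L$ was arbitrary, joint faithfulness forces $x = 0$.

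It remains to establish joint faithfulness, which I would deduce from residual finiteness of the group $\Gamma_W$. The group $\Gamma_W$ is finitely generated and residually finite: this is classical for braid groups, and in general it follows from the linearity of finite-type Artin groups --- the Lawrence--Krammer representation is faithful by Cohen--Wales and Digne --- together with Malcev's theorem that finitely generated linear groups are residually finite. Now take a nonzero element $x = \sum_{i=1}^{n} c_i g_i \in \bbC[\Gamma_W]$, written with the $g_i$ distinct and all $c_i \neq 0$. Residual finiteness produces a finite quotient $\pi \colon \Gamma_W \to Q$ in which the finitely many elements $g_i g_j^{-1}$ for $i \neq j$ all remain nontrivial, so that the $\pi(g_i)$ are pairwise distinct. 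Regarding the regular representation $\bbC[Q]$ as a finite-dimensional $\bbC[\Gamma_W]$-module via $\pi$, the element $x$ sends $1 \in \bbC[Q]$ to $\sum_i c_i \pi(g_i)$, which is nonzero since distinct group elements are linearly independent in $\bbC[Q]$ and not all $c_i$ vanish. Hence $x$ acts nontrivially on a finite-dimensional module, as needed.

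I do not anticipate a genuine obstacle here: the only external ingredient is residual finiteness of the braid group, which is standard, and the rest is soft. The point requiring care --- and the reason the argument runs through residual finite-dimensionality of $\bbC[\Gamma_W]$ rather than, say, faithfulness of its regular representation --- is that the equivalence $\iota^{*}$ is available only on the \emph{finite-dimensional} module categories, so every step must stay within finite-dimensional modules.
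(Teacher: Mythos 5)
Your proposal is correct, and its first half coincides exactly with the paper's reduction: since $\iota^{*}$ is an essentially surjective equivalence on finite-dimensional module categories, any element of $\ker(\iota)$ annihilates every finite-dimensional $\bbC[\Gamma_W]$-module, so injectivity follows once one knows that $\bbC[\Gamma_W]$ is residually finite-dimensional. Where you diverge is in proving that last fact. The paper also starts from linearity of $\Gamma_W$ (via a faithful $\rho\colon \Gamma_W \hookrightarrow GL(V)$, citing Krammer, Digne, Cohen--Wales), but then argues directly with the symmetric powers $\Sym^k V$: if $\rho(r)=0$ and all $\Sym^k\rho(r)=0$, a power-sum/Vandermonde-type argument on the matrix entries forces all coefficients $c_g$ to vanish. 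You instead pass from linearity to residual finiteness of the group via Malcev's theorem, separate the finitely many elements $g_ig_j^{-1}$ in a single finite quotient $Q$, and use the regular representation of $Q$ together with linear independence of distinct group elements in $\bbC[Q]$. Both routes are sound and of comparable depth; yours imports one more classical black box (Malcev) but in exchange the final step is cleaner and avoids the delicate bookkeeping with powers of matrix entries --- indeed, the paper's claim that $\sum_g c_g\rho(g)_{(i,j)}^k=0$ for all $k$ forces $\rho(g)_{(i,j)}=0$ whenever $c_g\neq 0$ requires grouping terms with equal entries, a point your argument sidesteps entirely. Your closing remark about why one must stay within finite-dimensional modules is exactly the right caution and matches the paper's framing.
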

\begin{proof}
  Suppose that $K = \ker(\iota)$.
  Since the pullback functor $\iota^*\colon eR_We\mods \to \bbC[\Gamma_W]\mods$ is an equivalence, we see that each element of $K$ must act by zero on every finite-dimensional module of $\bbC[\Gamma_W]$.
  To show that $K$ is trivial, it is enough to show that for each non-zero $r \in \bbC[\Gamma_W]$, there is some finite-dimensional module of $\bbC[\Gamma_W]$ on which $r$ does not act by zero.

  Let $r$ be any element of $\bbC[\Gamma_W]$.
  We will find a finite-dimensional representation of $\Gamma_W$ on which $r$ does not act by zero.
  Write $r$ as a finite linear combination
  \[
    r = \sum_{g\in \Gamma_W}c_gg,
  \]
  where each $c_g\in \bbC$  for each $g\in \Gamma_W$, and $c_g = 0$ for all but finitely many $g\in \Gamma_W$.
  It is well-known that $\Gamma_W$ is a linear group~\cite{kra:02,dig:03,coh.wal:02}.
  In particular, there is an embedding $\Gamma_W \xhookrightarrow{\rho} GL(V)$ for some finite-dimensional complex vector space $V$.
  Consider the representations $\Sym^kV$ as $k$ varies over the positive integers.
  For each $k \in \mathbb{N}$ consider $\Sym^k\rho(r)$, which is the element of $GL(\Sym^k(V))$ corresponding to the action of $r$ on $\Sym^kV$.
  If $\rho(r) = \Sym^1\rho(r)$ is nonzero, we are done.

  Otherwise, by choosing a basis $\{v_1,\ldots,v_n\}$ of $V$, we see that each matrix entry of $\rho(r)$ is zero.
  So for each $(i,j)$, we have the following equation:
  \[
    \sum_{g\in \Gamma_W}c_g(\rho(g))_{(i,j)} = 0.
  \]
  The matrix entries of $\Sym^k(\rho(r))$ corresponding to the basis vectors $v_1^k, \ldots, v_n^k$ contain as a subset the $k$th powers of the matrix entries of $\rho(r)$.
  If $\Sym^k(\rho(r))$ were zero for each $k$, then in particular we would have
  \[
    \sum_{g\in \Gamma_W}c_g(\rho(g))_{(i,j)}^k = 0
  \]
  for each positive integer $k$.
  Since all of the above sums are finite, the above equations hold if and only if whenever $c_g\neq 0$, we have $\rho(g)_{(i,j)} = 0$ for all $(i,j)$.
  This means that whenever $c_g\neq 0$, we have $\rho(g) = 0$.
  However, $g$ ranges over $\Gamma_W$, and $\rho \colon \Gamma_W\to GL(V)$ is an embedding.
  So for each $g \in \Gamma_W$, we have $\rho(g) \neq 0$.
  We conclude that $c_g = 0$ for each $g\in \Gamma_W$, which means that $r = 0$.
  This is a contradiction.

  We conclude that for some positive integer $k$, the action of $r$ on the finite-dimensional representation $\Sym^kV$ is nonzero.
  Therefore $K = \ker(\iota)$ is trivial, and the proof is complete.
\end{proof}

We have the following questions.
\begin{questions}\
  \begin{compactenum}[(1)]
  \item Is the homomorphism $\iota \colon \bbC[\Gamma_W]\to eR_We$ an isomorphism?    
  \item Is the pullback functor $\iota^*\colon eR_We\Mods \to \bbC[\Gamma_W]\Mods$ an equivalence of categories?
  \end{compactenum}
\end{questions}

We have a similar situation in the non-equivariant case.
From \autoref{prop:pi1-homomorphism} and \autoref{cor:pi1-hom-equivalence}, we see that the homomorphism $\iota \colon \bbC[\pi_1(U,e)] \to eRe$ induces an equivalence
\[
  \iota^*\colon eRe\mods \to \bbC[\pi_1(U,e)]\mods.
\]
If the group $\pi_1(U,e)$ is linear, then we can use the same argument as in \autoref{prop:injective-hom} to see that the above map $\iota$ is injective.
In general, we have the following questions.
\begin{questions}\
  \begin{compactenum}[(1)]
  \item Is the algebra homomorphism $\iota \colon \bbC[\pi_1(U,e)] \to eRe$ injective? Is it surjective?
  \item Is the pullback functor $\iota^*\colon eRe\Mods \to \bbC[\pi_1(U,e)]\Mods$ an equivalence?
  \end{compactenum}
\end{questions}

\subsubsection{The algebras associated to perverse sheaves on the closure of a stratum}
Recall the setup of \autoref{subsec:single-stratum-closure}.
We have the algebras $R_Z$ and $R/I_Z$, with a surjective ring homomorphism $\rho_Z \colon R/I_Z \to R_Z$ that induces an equivalence $\rho_Z^*\colon (R/I_Z)\mods \to R_Z\mods$.
We again have the following questions.
\begin{questions}\
  \begin{compactenum}[(1)]
  \item Is the algebra homomorphism $\rho_Z\colon (R/I_Z)\to R_Z$ injective?
  \item Is the pullback functor $\rho_Z^*\colon (R/I_Z)\Mods \to R_Z\Mods$ an equivalence?
  \end{compactenum}
\end{questions}

\subsection{$W$-equivariant IC sheaves supported on closures of smaller orbits}
Let $Y$ be the $W$-orbit of a single stratum in the stratification $\calS$ associated to a finite Coxeter arrangement.
Given a $W$-equivariant local system on $Y$, we can apply the intermediate extension functor to obtain a $W$-equivariant perverse sheaf on $\overline{Y}$, which extends by zero to a $W$-equivariant perverse sheaf on $X$.
\begin{question}
  Is there an equivariant analogue of \autoref{cor:main-recollement}?
  In other words, can we describe the restriction functor from $\Perv_W(\overline{Y},\calS|_{\overline{Y}}) \to \Perv_W(Y,\calS_Y)$ in terms of rings as the restriction via an idempotent?
\end{question}

\begin{comment}
As a special case of the above, let $Y$ be the $W$-orbit of a single stratum in $\calS$.
Let $Z = \overline{Y}$ be the closure.
Then $Z$ is a union of $\bbC$-vector spaces of the same dimension, all defined over $\bbR$.
Fix $Z' \subset Z$ to be one of these vector spaces, and let $Y' = Y \cap Z'$.
There is a unique $B \in \calC^+$ such that $B \subset Y'_{\bbR}$.
Let $W_B$ be the (point-wise) stabiliser of $B$ in $W$.
Then the data of a $W$-equivariant local system on $Y$ is equivalent to the data of a local system $\calL$ on $Y'$, together with an action of $W_B$ on $\calL$.
\todo[inline]{Prove the above.}
\begin{corollary}
  Let $\calL$ be a $W$-equivariant local system on the $W$-orbit of $Y$.
  Suppose that $\calL$ corresponds to the $e(R_W/I_Z)e$-module $M$.
  Then $j_{!*}(M)$ is an $(R_W/I_Z)$-module.
  When considered as an $R_W$-module, the equivalence $R_W\mods \to \Perv_W(X,\calS)$ sends $j_{!*}(M)$ to the $W$-equivariant perverse sheaf $\ic(\calL)$.
\end{corollary}
\end{comment}

\bibliographystyle{hsiam-doi}
\bibliography{bibliography}

\begin{thebibliography}{10}

\bibitem{bel.ber.del:82}
{\sc A.~A. Be{\u\i}linson, J.~Bernstein, and P.~Deligne}, {\em Faisceaux
  pervers}, in Analysis and topology on singular spaces, {I} ({L}uminy, 1981),
  vol.~100 of Ast\'erisque, Soc. Math. France, Paris, 1982, pp.~5--171.

\bibitem{bel:87}
{\sc A.~A. Be\u{\i}linson}, \href{https://doi.org/10.1007/BFb0078366}{{\em How
  to glue perverse sheaves}}, \href{https://doi.org/10.1007/BFb0078366}{in
  {$K$}-theory, arithmetic and geometry ({M}oscow, 1984--1986)}, vol.~1289 of
  Lecture Notes in Math., Springer, Berlin, 1987, pp.~42--51.

\bibitem{bou:02}
{\sc N.~Bourbaki}, \href{http://dx.doi.org/10.1007/978-3-540-89394-3}{{\em Lie
  groups and {L}ie algebras. {C}hapters 4--6}}, Elements of Mathematics
  (Berlin), Springer-Verlag, Berlin, 2002.
\newblock Translated from the 1968 French original by Andrew Pressley.

\bibitem{bra:02}
{\sc T.~Braden}, \href{https://doi.org/10.4153/CJM-2002-017-6}{{\em Perverse
  sheaves on {G}rassmannians}}, Canad. J. Math., 54 (2002), pp.~493--532.

\bibitem{bra.gri:99}
{\sc T.~Braden and M.~Grinberg},
  \href{https://doi.org/10.1215/S0012-7094-99-09609-6}{{\em Perverse sheaves on
  rank stratifications}}, Duke Math. J., 96 (1999), pp.~317--362.

\bibitem{cli.par.sco:88}
{\sc E.~Cline, B.~Parshall, and L.~Scott}, {\em Finite-dimensional algebras and
  highest weight categories}, J. Reine Angew. Math., 391 (1988), pp.~85--99.

\bibitem{coh.wal:02}
{\sc A.~M. Cohen and D.~B. Wales},
  \href{https://doi.org/10.1007/BF02785852}{{\em Linearity of {A}rtin groups of
  finite type}}, Israel J. Math., 131 (2002), pp.~101--123.

\bibitem{dig:03}
{\sc F.~Digne}, \href{https://doi.org/10.1016/S0021-8693(03)00327-2}{{\em On
  the linearity of {A}rtin braid groups}}, J. Algebra, 268 (2003), pp.~39--57.

\bibitem{ehr.str:16}
{\sc M.~Ehrig and C.~Stroppel},
  \href{https://doi.org/10.1007/s00029-015-0215-9}{{\em Diagrammatic
  description for the categories of perverse sheaves on isotropic
  {G}rassmannians}}, Selecta Math. (N.S.), 22 (2016), pp.~1455--1536.

\bibitem{fra.pir:04}
{\sc V.~Franjou and T.~Pirashvili}, {\em Comparison of abelian categories
  recollements}, Doc. Math., 9 (2004), pp.~41--56.

\bibitem{gal.gra.mai:85}
{\sc A.~Galligo, M.~Granger, and P.~Maisonobe},
  \href{http://www.numdam.org/item?id=AIF_1985__35_1_1_0}{{\em
  {$\mathscr{D}$}-modules et faisceaux pervers dont le support singulier est un
  croisement normal}}, Ann. Inst. Fourier (Grenoble), 35 (1985), pp.~1--48.

\bibitem{gel.mac.vil:96}
{\sc S.~Gelfand, R.~MacPherson, and K.~Vilonen},
  \href{https://doi.org/10.1215/S0012-7094-96-08319-2}{{\em Perverse sheaves
  and quivers}}, Duke Math. J., 83 (1996), pp.~621--643.

\bibitem{gud.nar:08}
{\sc F.~Gudiel-Rodr\'{i}guez and L.~Narv\'{a}ez-Macarro},
  \href{https://doi.org/10.1007/s10468-007-9058-1}{{\em Explicit models for
  perverse sheaves. {II}}}, Algebr. Represent. Theory, 11 (2008), pp.~149--178.

\bibitem{kap.sch:16}
{\sc M.~Kapranov and V.~Schechtman},
  \href{http://dx.doi.org/10.4007/annals.2016.183.2.4}{{\em Perverse sheaves
  over real hyperplane arrangements}}, Ann. of Math. (2), 183 (2016),
  pp.~619--679.

\bibitem{kra:02}
{\sc D.~Krammer}, \href{https://doi.org/10.2307/3062152}{{\em Braid groups are
  linear}}, Ann. of Math. (2), 155 (2002), pp.~131--156.

\bibitem{mac.vil:86}
{\sc R.~MacPherson and K.~Vilonen},
  \href{https://doi.org/10.1007/BF01388812}{{\em Elementary construction of
  perverse sheaves}}, Invent. Math., 84 (1986), pp.~403--435.

\bibitem{pol:97}
{\sc A.~Polishchuk}, \href{https://doi.org/10.4310/MRL.1997.v4.n2.a2}{{\em
  Perverse sheaves on a triangulated space}}, Math. Res. Lett., 4 (1997),
  pp.~191--199.

\bibitem{psa:14}
{\sc C.~Psaroudakis},
  \href{https://doi.org/10.1016/j.jalgebra.2013.09.020}{{\em Homological theory
  of recollements of abelian categories}}, J. Algebra, 398 (2014), pp.~63--110.

\bibitem{psa.vit:14}
{\sc C.~{Psaroudakis} and J.~{Vit\'oria}},
  \href{http://dx.doi.org/10.1007/s10485-013-9323-x}{{\em {Recollements of
  module categories.}}}, {Appl. Categ. Struct.}, 22 (2014), pp.~579--593.

\bibitem{sal:87}
{\sc M.~Salvetti}, \href{http://dx.doi.org/10.1007/BF01391833}{{\em Topology of
  the complement of real hyperplanes in {${\bf C}^N$}}}, Invent. Math., 88
  (1987), pp.~603--618.

\bibitem{ste:16}
{\sc B.~Steinberg}, \href{http://dx.doi.org/10.1007/978-3-319-43932-7}{{\em
  Representation theory of finite monoids}}, Universitext, Springer, Cham,
  2016.

\bibitem{str:09}
{\sc C.~Stroppel}, \href{https://doi.org/10.1112/S0010437X09004035}{{\em
  Parabolic category {$\mathscr{O}$}, perverse sheaves on {G}rassmannians,
  {S}pringer fibres and {K}hovanov homology}}, Compos. Math., 145 (2009),
  pp.~954--992.

\bibitem{vyb:07}
{\sc M.~Vybornov}, \href{https://doi.org/10.1007/s00222-006-0005-2}{{\em
  Perverse sheaves, {K}oszul {IC}-modules, and the quiver for the category
  {$\mathscr{O}$}}}, Invent. Math., 167 (2007), pp.~19--46.

\bibitem{wei:17}
{\sc M.~H. {Weissman}}, \href{http://arxiv.org/abs/1706.07847}{{\em Equivariant
  perverse sheaves on coxeter arrangements and buildings}}, June 2017,
  1706.07847.
\newblock Preprint.

\end{thebibliography}

\end{document}